\newtheorem{theorem}{Theorem}[subsection]
\newtheorem{lemma}[theorem]{Lemma}
\newtheorem{proposition}[theorem]{Proposition}
\newtheorem{corollary}[theorem]{Corollary} 
\theoremstyle{definition}  
\newtheorem{definition}[theorem]{Definition}
\newtheorem{example}[theorem]{Example}
\newtheorem{remark}[theorem]{Remark}
\newcommand{\End}{\operatorname{End}}
\newcommand{\Aut}{\operatorname{End}} 
\newcommand{\Hom}{\operatorname{Hom}} 
\def\uRep{\underline{\operatorname{Re}}\!\operatorname{p}} 
\newcommand{\lift}{\operatorname{lift}}
\newcommand{\Rep}{\operatorname{Rep}}
\newcommand{\tr}{\operatorname{tr}}
\newcommand{\im}{\operatorname{im}}
\newcommand{\ch}{\operatorname{ch}}
\newcommand{\edeg}{\operatorname{edeg}}
\newcommand{\id}{\text{id}}
\newcommand{\ev}{\text{ev}}
\newcommand{\coev}{\text{coev}}
\newcommand{\inj}{\iota}
\newcommand{\proj}{\pi}
\newcommand{\parity}[1]{\bar{#1}}
\newcommand{\psspace}[2]{#1_{\parity #2}}
\newcommand{\emp}{\varnothing}
\newcommand{\biemp}{{(\varnothing,\varnothing)}}
\newcommand{\bibox}{{(\Box,\Box)}}
\newcommand{\down}{\vee}
\newcommand{\up}{\wedge}
\newcommand{\cross}{{\Large\text{$\times$}}}
\newcommand{\bigo}{\bigcirc}
\newcommand{\bh}{{\bf{\bar h}}}
\newcommand{\h}{{\bf h}}
\newcommand{\s}{{\bf s}}
\newcommand{\de}{{\sf d}}
\newcommand{\gl}{\mathfrak{gl}}
\newcommand{\Z}{\mathbb{Z}}
\newcommand{\K}{\mathbb{K}}
\newcommand{\KS}{\mathbb{K}\Sigma}
\newcommand{\cat}{\mathcal}
\newcommand{\ep}{\varepsilon}
\newcommand{\black}{\bullet}
\newcommand{\white}{\circ}
\newcommand{\arup}[1]{\stackrel{#1}{\longrightarrow}}
\newcommand{\larup}[1]{\stackrel{#1}{\longleftarrow}}
\newcommand{\onto}{\twoheadrightarrow}
\newcommand{\into}{\hookrightarrow}
\newcommand{\flip}{\sigma}
\newcommand{\braid}{\mathrm{c}}
\newcommand{\nattrans}{\Rightarrow}
\newcommand{\unit}{\mathds{1}}
\newcommand{\funcat}{\mathcal{H}om}
\newcommand{\monfuncat}{\funcat^{\otimes}}
\newcommand{\preaddfuncat}{\funcat_{+}}
\newcommand{\tensorfuncat}{\preaddfuncat^{\otimes}}
\newcommand{\strtensorfuncat}{\preaddfuncat^{\otimes\text{-str}}}
\newcommand{\add}[1]{{#1}^{\mathrm{add}}}
\newcommand{\kar}[1]{{#1}^{\mathrm{kar}}}
\newcommand{\newterm}[1]{\textit{#1}}
\newcommand{\Vect}[1]{\text{Vect}_{#1}}
\newcommand{\SVect}[1]{\text{SVect}_{#1}}
\newcommand{\weights}{\Gamma}
\newcommand{\glmn}{\mathfrak{gl}(m|n)}
\newcommand{\cartan}{\mathfrak{h}}
\begin{document}

\title[Deligne's $\uRep(GL_\delta)$ and representations of $\mathfrak{gl}(m|n)$]{Deligne's category $\uRep(GL_\delta)$ and representations of general linear supergroups}

\author{Jonathan Comes}
\address{J.C.: Department of Mathematics,
University of Oregon, Eugene, OR 97403, USA}
\email{jcomes@uoregon.edu}

\author{Benjamin Wilson}
\address{B.W.: Dieffenbachstr., 27, Berlin 10967, Germany}
\email{benjamin@asmusas.net}

\begin{abstract}
We classify indecomposable summands of mixed tensor powers of  the natural representation for the general linear supergroup up to isomorphism.  We also give a formula for the characters of these summands in terms of composite supersymmetric Schur polynomials, and give a method for decomposing their tensor products.  Along the way, we describe indecomposable objects in $\uRep(GL_\delta)$ and explain how to decompose their tensor products. 
\end{abstract}

\date{\today}

\maketitle  

\setcounter{tocdepth}{1}
\tableofcontents

\section{Introduction}
\subsection{}Classical Schur-Weyl duality concerns the commuting actions of the symmetric group $\Sigma_r$ and the general linear group $GL_d$ on the tensor power $V^{\otimes r}$ of the natural representation $V$ of $GL_d$.
It enables the labelling of the isomorphism classes of indecomposable summands of the tensor power by partitions $\lambda \vdash r$ with height $l(\lambda) \leq d$ (this is \newterm{Weyl's Strip Theorem}), and the description of the characters of these summands in terms of Schur polynomials.
Schur-Weyl duality for the general linear supergroup $GL(m|n)$, established by Sergeev \cite{Serg} and by Berele and Regev \cite{BR}, provides similar insights into structure of the tensor power $V^{\otimes r}$ of the natural representation $V$ of $GL(m|n)$.
In this case, the isomorphism classes of the indecomposable summands of $V^{\otimes r}$ are parametrized by those partitions $\lambda \vdash r$ that are \newterm{$(m|n)$-hook}, that is, partitions whose Young diagram can be covered by an $m$-wide, $n$-high hook, and the characters of the summands are described by the so-called supersymmetric Schur polynomials.

An analogue of Schur-Weyl duality for the \newterm{mixed tensor powers} 
$$T(r,s) = T_{V,V^*} (r,s) = V^{\otimes r} \otimes (V^*)^{\otimes s}$$ 
of $GL_d$ was developed by \cite{MR1280591,Stembridge1987,Koike,MR1024455}.
Here, the walled Brauer algebras $B_{r,s} (\delta)$, with $\delta = d$, replace the group algebra of the symmetric group as the generic centralizer of the $GL_d$-action.
It is known, in particular, that the indecomposable summands are labelled up to isomorphism by certain bipartitions (i.e. pairs of partitions).
Schur-Weyl duality for mixed tensor powers also holds for the general linear supergroup $GL(m|n)$, where $\delta = m-n$ is the super dimension of the natural representation $V$.
However, in this case, many fundamental questions remain unresolved.
In this paper, we classify for the first time the indecomposable summands of the mixed tensor powers for the general linear supergroups up to isomorphism, and derive a character formula for the indecomposable summands in terms of composite supersymmetric Schur polynomials.
In addition, we describe a method for the decomposition of tensor products of these indecomposable summands.

\subsection{}We work over a field $\K$ of characteristic zero throughout, identify finite dimensional representations of $GL(m|n)$ with integral representations of the Lie superalgebra $\gl(m|n)$, and write $\Rep (GL_d)$ and $\Rep(\glmn)$ for the categories of finite-dimensional representations of $GL_d$ and $\gl(m|n)$, respectively.
Fundamental to our approach is the tensor category $\uRep(GL_\delta)$, defined by Deligne [Del1-2]\nocite{Del96, Del07}, that permits the simultaneous study of the mixed tensor powers for the general linear groups and the general linear supergroups.
This category, which we refer to as \newterm{Deligne's category}, is constructed as the additive and Karoubi envelope of a ``skeleton'' tensor category $\uRep_0(GL_\delta)$ (cf. \S\ref{uGLdefn}).
Up to isomorphism, the objects $w_{r,s}$ of $\uRep_0 (GL_\delta)$ are parametrized by pairs $(r,s)$ of non-negative integers, representing the potencies of a mixed tensor power, and the morphism spaces are spanned by walled Brauer diagrams of the appropriate sizes.
The structure  of Deligne's category depends upon the parameter $\delta \in \K$ so that, in particular,  the endomorphism algebras are the walled Brauer algebras $B_{r,s}(\delta)$.
The universal property of Deligne's category guarantees that for any rigid $\delta$-dimensional object $V$ in a tensor category $\cat T$ satisfying hypotheses familiar from classical Schur-Weyl duality (cf. \S\ref{mtp}), there exists a full tensor functor $F : \uRep(GL_\delta) \to \cat T$ such that for any $r,s \geq 0$, $F(w_{r,s}) = T(r,s)$ is the corresponding mixed tensor power.
In particular, for any $d > 0$ and for any $m,n \geq 0$ there exist full tensor functors
$$ F_d : \uRep(GL_d) \to \Rep(GL_d), \qquad F_{m|n} : \uRep(GL_{m-n}) \to \Rep(\glmn) $$
defined by the natural representations of $GL_d$ and of $GL(m|n)$, respectively.
Recent results in the representation theory of the walled Brauer algebra also play crucial roles.
For any bipartition $\lambda$, let $\lambda^\black, \lambda^\white$ be the partitions defined by $\lambda = (\lambda^\black, \lambda^\white)$, let $l(\lambda) = l(\lambda^\black) + l(\lambda^\white)$, and write $\lambda\vdash ( |\lambda^\black|, | \lambda^\white|)$.
It was shown in \cite{CDDM} that the walled Brauer algebras $B_{r,s}(\delta)$ are cellular, with standard modules  parametrized by the set of bipartitions 
$$\Lambda_{r,s} = \{ \ \lambda \ | \   \lambda\vdash (r-i, s-i), \ 0 \leq i \leq \min(r,s) \ \}.$$
A recursive formula for the decomposition numbers for the walled Brauer algebras was described in \cite{CD}, using cap diagrams introduced by Brundan and Stroppel [BS2-5]\nocite{BS1,BS2,BS3,BS4}.
The classification of the simple modules for the walled Brauer algebras up to isomorphism obtained in \cite{CDDM} enables the parametrization of the indecomposable objects of Deligne's category by bipartitions (cf. Theorem \ref{indeclass}).
Writing $L(\lambda)$ for the indecomposable object corresponding to the bipartition $\lambda$,
 we have the following description for the decomposition of tensor products of indecomposable objects of $\uRep(GL_\delta)$ for generic $\delta$ (see Theorem \ref{genten} for a precise statement).
\begin{theorem}\label{introthm1}
For generic values of $\delta \in \K$,
$$ L(\lambda) \otimes L(\mu) \cong \bigoplus_{\nu} L(\nu)^{\oplus \Gamma_{\lambda, \mu}^\nu}$$
where the sum is over bipartitions $\nu$ and the coefficients $\Gamma_{\lambda, \mu}^\nu$ are given in terms of Littlewood-Richardson coefficients (see \eqref{Gamma}).
\end{theorem}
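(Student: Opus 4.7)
The plan is to reduce the computation to classical mixed-tensor Schur--Weyl duality for $GL_d$ at large integer $d$, using the tensor functor $F_d$ and the semisimplicity of $\uRep(GL_\delta)$ at generic $\delta$. The first task is to establish that $\uRep(GL_\delta)$ is semisimple for generic $\delta$: the endomorphism algebras $B_{r,s}(\delta)$ are semisimple for generic $\delta$ by \cite{CDDM}, and because every object is a summand of some $w_{r,s}$, semisimplicity propagates to the whole category. By semisimplicity, it suffices to prove
$$\dim \Hom_{\uRep(GL_\delta)}\bigl(L(\nu),\, L(\lambda)\otimes L(\mu)\bigr) = \Gamma_{\lambda,\mu}^\nu$$
for all bipartitions $\nu$.

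The key observation is that, once the $L(\lambda)$ are realized as images of a compatible family of idempotents $e_\lambda \in B_{r,s}(\delta)$ varying algebraically with $\delta$, the left-hand side is locally constant in $\delta$ on the Zariski-open semisimple locus, since $\Hom_{\uRep(GL_\delta)}(w_{r,s}, w_{r',s'})$ is spanned by walled Brauer diagrams whose number is independent of $\delta$. Hence the generic value can be computed by specializing to $\delta = d$ for any sufficiently large integer $d$. Choosing $d$ larger than $l(\lambda)+l(\mu)+l(\nu)$ and applying $F_d : \uRep(GL_d) \to \Rep(GL_d)$, classical mixed-tensor Schur--Weyl duality (Koike, Stembridge \cite{Koike, Stembridge1987}) identifies $F_d(L(\lambda))$ with the irreducible rational $GL_d$-module attached to $\lambda$ in Koike's composite-partition parametrization, and Koike's formula \cite{Koike} yields
$$F_d(L(\lambda)) \otimes F_d(L(\mu)) \cong \bigoplus_\nu F_d(L(\nu))^{\oplus \Gamma_{\lambda,\mu}^\nu}.$$
In this stable range $F_d$ is fully faithful on the additive subcategory generated by the $L(\lambda)$, so Hom dimensions are preserved, and the multiplicities transfer back to $\uRep(GL_\delta)$ at the generic parameter.

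The main obstacle is the identification step: verifying that $F_d(L(\lambda))$ is precisely the simple $GL_d$-module indexed by $\lambda$ in Koike's parametrization, while correctly tracking the rank stratification $\lambda \vdash (r-i, s-i)$ inside $w_{r,s}$. This amounts to a careful study of how the idempotents $e_\lambda$ specialize at $\delta = d$ and how classical mixed Schur--Weyl duality recognizes them across all bipartition ranks simultaneously. Once this dictionary and the fullness of $F_d$ on the relevant subcategory are secured, semisimplicity together with the generic constancy in $\delta$ make the rest of the argument essentially formal.
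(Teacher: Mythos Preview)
Your overall strategy---reduce to $\Rep(GL_d)$ for large $d$ via the tensor functor $F_d$ and invoke Koike's theorem---is exactly the paper's approach.  The gap is in the passage from generic $\delta$ to integer $\delta = d$.  You argue that the Hom dimension $\dim\Hom(L(\nu), L(\lambda)\otimes L(\mu))$ is locally constant on the Zariski-open semisimple locus, and then write ``Hence the generic value can be computed by specializing to $\delta = d$.''  But by Theorem~\ref{ss}, $\uRep(GL_\delta)$ is semisimple if and only if $\delta\notin\Z$: no integer value of $\delta$ lies in the semisimple locus.  So constancy on that locus tells you nothing about what happens at $\delta = d$, and the ``Hence'' is a non-sequitur.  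Moreover, at non-semisimple $\delta$ the Hom dimension need not compute the multiplicity in a direct-sum decomposition, so even granting that the Hom dimension extends nicely, you would still need to argue separately that it equals the Krull--Schmidt multiplicity.

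The paper handles this with the lifting map $\lift_\delta : R_\delta \to R_t$ (\S\ref{lift}), constructed by lifting idempotents from $\K B_{r,s}(\delta)$ to $\K[[t-\delta]] B_{r,s}(t)$.  The crucial input is Theorem~\ref{liftprops}: $\lift_\delta$ is a \emph{ring} isomorphism, and for any fixed bipartition $\lambda$ one has $\lift_d(\lambda)=\lambda$ for all but finitely many integers $d$.  This is the precise statement replacing your informal ``the idempotents vary algebraically and specialize correctly.''  With it, the proof of Theorem~\ref{genten} is short: write $\lambda\mu = \nu^{(1)}+\cdots+\nu^{(k)}$ in $R_t$, choose $d$ large enough that $\lift_d$ fixes $\lambda,\mu,\nu^{(1)},\ldots,\nu^{(k)}$ and $d\geq l(\lambda)+l(\mu)$, so the same equation holds in $R_d$; then apply $F_d$, use Theorem~\ref{Fdim} to identify $F_d(L(\lambda))=V_\lambda$, and compare with Koike.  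Your ``main obstacle'' (the identification $F_d(L(\lambda))\cong V_\lambda$) is indeed Theorem~\ref{Fdim} and does require work, but the step you treated as routine---bridging generic $\delta$ and integer $d$---is where the real machinery lives.
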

This theorem is derived using Koike's Theorem \cite{Koike}, which gives the decomposition of a tensor product of irreducible rational representations for the general linear group.
In the case of general linear supergroups, it is compatible with the result of Sergeev \cite{Serg}.
The calculation of the decomposition numbers for the walled Brauer algebras \cite{CD} and the cap diagrams of Brundan and Stroppel permit the definition of a ``lifting isomorphism'' in the spirit of Comes and Ostrik \cite{CO1}.
The lifting isomorphism relates the additive Grothendieck rings of $\uRep(GL_\delta)$ in the singular and generic cases, thus enabling the decomposition of any tensor product of indecomposable objects of $\uRep(GL_\delta)$ for any value of $\delta$. 

The known decomposition of the mixed tensor powers for $GL_d$ \cite{MR1280591,Stembridge1987,Koike,MR1024455} finds the following expression in terms of the functor $F_d$:
\begin{theorem}\label{introthm2}(see Theorem \ref{Fdim})
For any $d > 0$ and any bipartition $\lambda$, $F_d(L(\lambda))$ is an indecomposable object of $\Rep(GL_d)$ and is non-zero if and only if $l(\lambda) \leq d$.
Moreover, any non-zero indecomposable summand of a mixed tensor power $T(r,s)$ in $\Rep(GL_d)$ is isomorphic to $F_d(L(\lambda))$ for precisely one bipartition $\lambda \in \Lambda_{r,s}$ with $l(\lambda) \leq d$.
\end{theorem}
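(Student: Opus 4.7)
The strategy is to combine the universal property of Deligne's category with classical Schur-Weyl duality for mixed tensor powers of $GL_d$, reducing the statement to matching combinatorial indexings of indecomposables via the Krull-Schmidt theorem.

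First, by Theorem \ref{indeclass}, each $L(\lambda)$ arises as the image of a primitive idempotent $e_\lambda \in B_{r,s}(\delta) = \End_{\uRep(GL_\delta)}(w_{r,s})$ for $\lambda \in \Lambda_{r,s}$. Specialising $\delta = d$ and applying the tensor functor $F_d$ yields $F_d(L(\lambda)) = \im F_d(e_\lambda)$, a direct summand of $T(r,s)$. A primitive idempotent in a finite-dimensional algebra maps, under any algebra surjection, either to zero or again to a primitive idempotent, since the corner $e_\lambda B_{r,s}(d) e_\lambda$ is local and any non-zero quotient of a local ring is local. Hence $F_d(L(\lambda))$ is either zero or indecomposable, which settles the first claim of the theorem.

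For the remaining claims I would invoke the classical results of \cite{MR1280591, Stembridge1987, Koike, MR1024455}: the algebra map $B_{r,s}(d) \to \End_{GL_d}(T(r,s))$ is surjective with semisimple image, whose matrix blocks are indexed by $\{\lambda \in \Lambda_{r,s} : l(\lambda) \leq d\}$. Combined with the cellular structure on $B_{r,s}(d)$ from \cite{CDDM}, this gives that a simple $B_{r,s}(d)$-module $D(\lambda)$ survives in the quotient if and only if $l(\lambda) \leq d$, so that the primitive idempotent $e_\lambda$ whose head is $D(\lambda)$ has $F_d(e_\lambda) \neq 0$ precisely when $l(\lambda) \leq d$. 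Choosing any decomposition $w_{r,s} \cong \bigoplus_\mu L(\mu)^{\oplus n_\mu}$ in $\uRep(GL_d)$, applying $F_d$, and comparing the resulting decomposition of $T(r,s)$ with its classical one via Krull-Schmidt forces $F_d(L(\lambda))$, when non-zero, to be isomorphic to the classical irreducible $GL_d$-summand of $T(r,s)$ labelled by $\lambda$; distinct $\lambda$ therefore give non-isomorphic summands, and every indecomposable summand of $T(r,s)$ arises in this way.

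The main obstacle is the bookkeeping that identifies the Deligne-category label $\lambda$ of $L(\lambda)$ with the label of the corresponding irreducible $GL_d$-summand of $T(r,s)$ in classical representation theory. Both indexings are ultimately controlled by the cellular structure of $B_{r,s}(d)$, so the compatibility should follow by comparing the idempotents of \cite{CDDM} cutting out $L(\lambda)$ with those cut out by the classical Schur-Weyl decomposition; nevertheless, formalising this matching is the genuine technical content of the proof, and it is what makes the argument more than a formal shuffling of the Krull-Schmidt data.
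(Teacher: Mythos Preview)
Your first paragraph is correct and essentially matches the paper: indecomposability of $F_d(L(\lambda))$ follows from fullness of $F_d$ plus locality of $\End(L(\lambda))$, which is the content of Proposition~\ref{distinctind}. Likewise, the ``moreover'' clause follows from Theorem~\ref{fullness} once the non-vanishing criterion is known, as you indicate.

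Where you diverge from the paper is in the non-vanishing criterion and, crucially, in the label-matching problem you correctly identify as the real obstacle. The paper does \emph{not} resolve this by comparing cellular idempotents with the classical Schur--Weyl picture; instead it runs an induction on $|\lambda|$ that pins down the label via highest-weight theory. The base case $\lambda^\black = \emp$ or $\lambda^\white = \emp$ is Proposition~\ref{leftW}: here $e_\lambda$ is literally a Young symmetrizer, so $W(\lambda)$ is Weyl's construction of $V_\lambda$ (or zero if $l(\lambda) > d$). For general $\lambda \vdash (r,s)$, Proposition~\ref{zdecomp} gives
\[
V_{(\lambda^\black,\emp)} \otimes V_{(\emp,\lambda^\white)} \;=\; F_d(\im z_\lambda) \;=\; W(\lambda) \oplus W(\mu^{(1)}) \oplus \cdots \oplus W(\mu^{(k)})
\]
with each $|\mu^{(j)}| < |\lambda|$. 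If $l(\lambda) \leq d$, the left side has a highest-weight vector of weight $wt(\lambda)$; since there is no cancellation in \eqref{wt}, induction shows no $W(\mu^{(j)})$ can carry this weight, so $W(\lambda)$ does, and being indecomposable it is $V_\lambda$. If $l(\lambda) > d$, Koike's Theorem~\ref{Koike} forces every irreducible summand of the left side to be $V_\nu$ with $|\nu| < |\lambda|$; by induction these are all accounted for by the $W(\mu^{(j)})$, and Proposition~\ref{distinctind} then leaves no room for a non-zero $W(\lambda)$.

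Your route via the cellular structure could be made to work, but it would require showing that the kernel of $B_{r,s}(d) \twoheadrightarrow \End_{GL_d}(T(r,s))$ kills exactly the simple heads $D(\lambda)$ with $l(\lambda) > d$ \emph{under the labelling of \cite{CDDM}}---and checking that this labelling agrees with the classical one is not a formality. The paper's highest-weight induction sidesteps this bookkeeping entirely, at the cost of importing Koike's tensor-product formula for the vanishing case.
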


The \newterm{Young diagram of a bipartition} $\lambda$ is obtained by superimposing the Young diagrams for the partitions $\lambda^\black$ and $\lambda^\white$ so that their top and left edges coincide, and then rotating the Young diagram for $\lambda^\white$ $180$-degrees about its upper-left corner (cf. \S\ref{bipart}).
A bipartition is \newterm{$(m|n)$-cross}\footnote{In \cite{MV06}, the term \newterm{$\glmn$-standard} is used.} if its Young diagram can be covered with an $m$-high, $n$-wide cross (cf. \S\ref{vanishW}).
The decomposition of the mixed tensor powers for $\glmn$ can be described in terms of the functor $F_{m|n}$ as follows.
\begin{theorem}\label{introthm3}
For any $m,n \geq 0$ and any bipartition $\lambda$, $F_{m|n}(L(\lambda))$ is an indecomposable object of $\Rep(\glmn)$ and is non-zero if and only if $\lambda$ is $(m|n)$-cross.
Moreover, any non-zero indecomposable summand of a mixed tensor power $T(r,s)$ in $\Rep(\glmn)$ is isomorphic to $F_{m|n}(L(\lambda))$ for precisely one $(m|n)$-cross bipartition $\lambda \in \Lambda_{r,s}$.
\end{theorem}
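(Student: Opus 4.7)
The plan parallels the classical strategy underlying Theorem \ref{introthm2}: decompose each $w_{r,s}$ in $\uRep(GL_{m-n})$ into indecomposables $L(\lambda)$, apply the full tensor functor $F_{m|n}$, and identify which summands survive. The height constraint $l(\lambda)\leq d$ from the ordinary setting is replaced by the $(m|n)$-cross condition, and the role of the Schur polynomial character is played by a composite supersymmetric Schur polynomial. The fact that the image $F_{m|n}(L(\lambda))$ is zero or indecomposable follows formally: $\End_{\uRep(GL_{m-n})}(L(\lambda))$ is a finite-dimensional local $\K$-algebra (since $\uRep(GL_{m-n})$ is Krull--Schmidt and $L(\lambda)$ is indecomposable), fullness of $F_{m|n}$ yields a surjection onto $\End_{\Rep(\glmn)}(F_{m|n}(L(\lambda)))$, and a quotient of a local ring is zero or local, so in the Krull--Schmidt category $\Rep(\glmn)$ the image is either zero or indecomposable.

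\textbf{Vanishing criterion and non-isomorphism.} I would next show that whenever $F_{m|n}(L(\lambda))\neq 0$, its character is the composite supersymmetric Schur polynomial attached to $\lambda$. To obtain these characters, I would inductively invert the decomposition of $w_{r,s}$ into indecomposables (obtained via the lifting isomorphism mentioned after Theorem \ref{introthm1}) against the known character of $T(r,s)$ in $\Rep(\glmn)$, working one pair $(r,s)$ at a time. The vanishing criterion is then supplied by the theorem of Moens and Van der Jeugt \cite{MV06}: a composite supersymmetric Schur polynomial vanishes precisely when its indexing bipartition fails to be $(m|n)$-cross, and the nonzero such polynomials are linearly independent. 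Linear independence yields pairwise non-isomorphism of the surviving $F_{m|n}(L(\lambda))$, while the vanishing statement settles the ``only if'' direction.

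\textbf{Completeness of the list of summands.} By Theorem \ref{indeclass}, the indecomposable summands of $w_{r,s}$ in $\uRep(GL_{m-n})$ are precisely the $L(\lambda)$ with $\lambda\in\Lambda_{r,s}$. Applying $F_{m|n}$ displays $T(r,s)$ as a direct sum of copies of the $F_{m|n}(L(\lambda))$, so every nonzero indecomposable summand of $T(r,s)$ is isomorphic to some $F_{m|n}(L(\lambda))$ with $\lambda\in\Lambda_{r,s}$ and $(m|n)$-cross, and it is uniquely determined by its character.

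\textbf{Main obstacle.} The crux lies in the character computation combined with the vanishing criterion: one must translate the cap-diagram combinatorics of Brundan--Stroppel and the lifting isomorphism into the explicit composite supersymmetric Schur polynomial character, and then invoke the nontrivial combinatorial identity of \cite{MV06} governing when such a polynomial vanishes. An alternative route to the nonvanishing direction is to exhibit an explicit irreducible constituent of $F_{m|n}(L(\lambda))$ with highest weight read off from an $(m|n)$-cross filling of the Young diagram of $\lambda$, generalizing the Berele--Regev construction for $(m|n)$-hook partitions; this is more concrete but requires additional bookkeeping with weights of $\glmn$.
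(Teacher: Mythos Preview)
Your proposal contains a genuine gap: the claim that ``whenever $F_{m|n}(L(\lambda))\neq 0$, its character is the composite supersymmetric Schur polynomial attached to $\lambda$'' is false. The correct statement is Theorem~\ref{introthm4}: $\ch F_{m|n}(L(\lambda)) = \sum_\mu D_{\lambda,\mu}(m-n)\,\s_\mu$, a sum weighted by walled Brauer decomposition numbers, not the single polynomial $\s_\lambda$. This is not a generic-versus-special issue that goes away for the $\lambda$ you care about: for $m=1$, $n=2$ and $\lambda=((1^2),(3))$, the bipartition is $(1|2)$-cross, $W(\lambda)\neq 0$, and $\ch W(\lambda)=\s_{((1^2),(3))}+\s_{((1),(2))}$ with both terms nonzero. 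Consequently, the Moens--Van der Jeugt vanishing criterion for $\s_\lambda$ does not by itself yield the vanishing of $W(\lambda)$ when $\lambda$ is not $(m|n)$-cross: you would additionally need to know that $D_{\lambda,\mu}(m-n)=0$ for every $(m|n)$-cross $\mu$ whenever $\lambda$ is not $(m|n)$-cross, a nontrivial combinatorial statement about cap diagrams that your outline does not address. Likewise, linear independence of the $\s_\mu$ does not directly distinguish the $W(\lambda)$, since their characters are triangular combinations of the $\s_\mu$ rather than the $\s_\mu$ themselves (though non-isomorphism follows more simply from fullness via Proposition~\ref{distinctind}, as you note for indecomposability).

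The paper proceeds differently on the vanishing criterion. For the nonvanishing direction it does essentially what you sketch as your ``alternative route'': when $\lambda$ is $(m|n)$-cross, an explicit extremal monomial of total degree $|\lambda^\black|+|\lambda^\white|$ is located in $\ch(W((\lambda^\black,\emp))\otimes W((\emp,\lambda^\white)))$ via supertableaux, and a degree bound coming from the character formula forces it to survive in $\ch W(\lambda)$. For the vanishing direction the paper does \emph{not} invoke \cite{MV06}; instead it first treats the minimal non-cross bipartitions (``almost $(m|n)$-cross'') by showing directly that $\lift_{m-n}(\lambda)=\lambda$ for such $\lambda$ and then computing $\dim_\K W(\lambda)=\de_\lambda=0$ via an explicit determinant identity, and then reduces the general non-cross case to this one by removing a box and inducting along a tensor-by-$L((\Box,\emp))$ or $L((\emp,\Box))$ argument.
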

The same result was recently obtained via a different approach by Brundan and Stroppel \cite{BS}. 
 Their approach yields  additional information about the modules $F_{m|n}(L(\lambda))$; for instance, the irreducible socles and heads are computed explicitly.  

In the case when $s=0$, a bipartition $\lambda \in \Lambda_{r,0}$ is $(m|n)$-cross if and only if $\lambda^\black$ is $(m|n)$-hook.
Thus, in this case, Theorem \ref{introthm3} gives the decomposition of the covariant tensor power $V^{\otimes r}$ familiar from the work of Sergeev \cite{Serg} and of Berele and Regev \cite{BR} (similarly, in the contravariant case, i.e. when $r=0$).
On the other hand, if $n=0$ then a bipartition $\lambda$ is $(m|n)$-cross if and only if $l(\lambda) \leq m$.
Thus Theorem \ref{introthm2} can be seen as the special case of Theorem \ref{introthm3} where $n=0$.

For any bipartition $\mu$, let $\s_{\mu}$ denote the corresponding composite supersymmetric Schur polynomial (see e.g. \cite{MV06}).
Then we have the following formula for the characters of the indecomposable summands of the mixed tensor powers for $GL(m|n)$.
\begin{theorem}\label{introthm4}(see Theorem \ref{chW})
Let $m,n \geq 0$ and $\delta = m-n$.  Then for any bipartition $\lambda$, $$\ch F_{m|n} (L(\lambda)) = \sum_{\mu 
} D_{\lambda, \mu} (\delta) \ \s_\mu,$$ where the $D_{\lambda, \mu}$ are the decomposition numbers for the walled Brauer algebras.
\end{theorem}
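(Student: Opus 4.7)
The plan is to combine Schur--Weyl duality for $\glmn$ with the cellular structure of the walled Brauer algebras from \cite{CDDM, CD}. Fix any $(r,s)$ with $\lambda \in \Lambda_{r,s}$, set $B := B_{r,s}(\delta)$ and $T := T(r,s)$, and view $T$ as a $(\glmn, B)$-bimodule, which is free (hence projective) as a right $B$-module. Because $F_{m|n}$ sends the primitive idempotent $e_\lambda \in B$ cutting out $L(\lambda)$ in $\uRep(GL_\delta)$ to the corresponding idempotent in $\End_{\glmn}(T)$, Theorem~\ref{introthm3} together with a standard argument identifies
\[
F_{m|n}(L(\lambda)) \;=\; T \otimes_B P(\lambda),
\]
where $P(\lambda) = B e_\lambda$ is the indecomposable projective $B$-module with simple head $L_B(\lambda)$.

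The cellular structure of $B$ equips $P(\lambda)$ with a $\Delta$-filtration whose multiplicities are, by the reciprocity built into the cellular formalism, $[P(\lambda):\Delta(\mu)] = [\Delta(\mu) : L_B(\lambda)] = D_{\lambda,\mu}(\delta)$. Since $T \otimes_B -$ is exact, it transports this to a filtration of $F_{m|n}(L(\lambda))$ whose successive subquotients are the mixed Weyl modules $W(\mu) := T \otimes_B \Delta(\mu)$, each appearing with multiplicity $D_{\lambda,\mu}(\delta)$. Additivity of $\ch$ on short exact sequences then yields
\[
\ch F_{m|n}(L(\lambda)) \;=\; \sum_\mu D_{\lambda,\mu}(\delta)\, \ch W(\mu),
\]
so the theorem reduces to the character identity $\ch W(\mu) = \s_\mu$.

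To establish this last identity I would construct $\Delta(\mu)$ diagrammatically inside $\uRep(GL_\delta)$, so that $W(\mu)$ arises as $F_{m|n}(\widetilde W(\mu))$ for a universal Weyl-type object $\widetilde W(\mu)$ in Deligne's category, independent of $(m,n)$. For $d$ sufficiently large, applying the companion functor $F_d$ of Theorem~\ref{introthm2} gives $\ch F_d(\widetilde W(\mu)) = s_\mu$ (the classical composite Schur polynomial) by Koike's theorem \cite{Koike}, and $\ch W(\mu) = \s_\mu$ is then obtained by specialising the classical character to the super alphabet in accordance with the defining specialisation of $\s_\mu$.

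The main obstacle is precisely this character identity for $W(\mu)$: although it is morally forced by the universal property of $\uRep(GL_\delta)$ together with Koike's theorem, a rigorous proof requires an explicit diagrammatic construction of the cell modules inside Deligne's category and a careful verification that taking characters commutes with the passage from $F_d$ to $F_{m|n}$. A subsidiary technical point is the cellular reciprocity $[P(\lambda):\Delta(\mu)] = D_{\lambda,\mu}(\delta)$, which must be made to work uniformly in $\delta$ (including the degenerate value $\delta = 0$, where $B$ is cellular but not quasi-hereditary).
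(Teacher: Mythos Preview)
Your approach via the $(\glmn, B)$-bimodule structure is natural, but it has a genuine gap at the point where you assert that $T = T(r,s)$ is free (hence flat) as a $B$-module. This is false in general. For a concrete counterexample take $m=n=1$ (so $\delta=0$) and $r=s=1$. Then $B_{1,1}(0)\cong \K[e]/(e^2)$ is local Artinian, so for finitely generated modules flat $=$ projective $=$ free. Here $\dim_\K T(1,1)=4$, so freeness would force rank $2$, and then $e$ would act on $T$ with rank $2$. But $e$ is the cup--cap diagram, which factors through the one-dimensional object $\unit$, so it acts on $T$ with rank exactly $1$. Hence $T$ is not flat over $B$, and $T\otimes_B -$ is not exact. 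Without exactness you cannot transport the $\Delta$-filtration of $P(\lambda)$ to a filtration of $F_{m|n}(L(\lambda))$ with subquotients $T\otimes_B\Delta(\mu)$, and the reduction to $\ch(T\otimes_B\Delta(\mu))=\s_\mu$ collapses. More generally, whenever some $F_{m|n}(L(\nu))$ vanishes (which happens as soon as $r,s$ are large relative to $m,n$; see Theorem~\ref{crossthm}) the corresponding primitive idempotent lies in the kernel of $B\to\End_{\glmn}(T)$, so $T$ cannot be faithful, let alone free.

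There is a second, independent difficulty in your step~5. Even granting the filtration, your plan to realise the cell modules as ``universal Weyl objects'' $\widetilde W(\mu)$ inside $\uRep(GL_\delta)$ runs into the fact that Deligne's category is not abelian: cell modules are defined as quotients and need not exist there, so the proposed comparison with $F_d$ for large $d$ has no object to be applied to.

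The paper sidesteps both issues entirely. Its proof is a direct induction on $|\lambda|$ carried out in the additive Grothendieck rings. Using Proposition~\ref{zdecomp} one writes
\[
\lambda=(\lambda^\black,\emp)(\emp,\lambda^\white)-\sum_{|\nu|<|\lambda|}a_{\lambda,\nu}\,\nu\ \in R_{m-n},
\]
applies the ring isomorphism $\lift_{m-n}$ together with the product formula~\eqref{leftright} in $R_t$ to obtain an explicit expression for $D_{\lambda,\mu}$, and then compares this with the expansion of $\ch W(\lambda)$ obtained from the tensor-functor property of $F_{m|n}$, the induction hypothesis, and the known identities for $\s_{(\lambda^\black,\emp)}\s_{(\emp,\lambda^\white)}$ (Proposition~\ref{sprops}). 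No bimodule or cell-filtration machinery is invoked.
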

When $|\lambda^\white|=0$, the decomposition number $D_{\lambda,\mu}$ is $1$ if $\lambda=\mu$ and $0$ otherwise, and  $\s_\lambda$ is the (non-composite) supersymmetric Schur polynomial associated to $\lambda^\black$.  
Thus, if $\lambda = (\lambda^\black, \emp)$ and $\lambda^\black$ is $(m|n)$-hook, then the character formula of Theorem \ref{introthm4} reduces to that of Sergeev \cite{Serg} and Berele and Regev \cite{BR}.

\subsection{}The paper is organized as follows.  We begin in \S\ref{preliminaries} with a review of the category-theoretic notions necessary for the definition of Deligne's category and derivation of its universal property in \S\ref{delignescat}.
The indecomposable objects of Deligne's category are then classified in \S\ref{indecomposables} using the cellular structure of the walled Brauer algebras as described in \cite{CDDM}.
In \S\ref{Fd}, the representation theory of the general linear group is briefly recalled, and Koike's Theorem on the decomposition of tensor products of indecomposable representations is reviewed.
The lifting isomorphism is defined in \S\ref{lift}, relating the additive Grothendieck rings of Deligne's category in the singular and generic cases, and it is shown that the defining coefficients are the decomposition numbers of the walled Brauer algebras.
The analogue of Koike's Theorem in Deligne's category for generic values of $\delta$ is presented in \S\ref{tensordecomp}, and it is shown that the lifting isomorphism enables the decomposition of tensor products of indecomposables in all cases.
In \S\ref{super}, composite supersymmetric Schur polynomials are introduced and the results of \S\ref{lift} and \S\ref{tensordecomp} are employed to derive the character formula for the indecomposable summands of the mixed tensor powers of the general linear supergroup.
Next, the character formula is used to prove the classification of these indecomposable summands in terms of $(m|n)$-cross bipartitions, as described by Theorem \ref{introthm3}.  Finally, we illustrate how to decompose tensor products of these indecomposable summands with an explicit example.  

\subsection{Acknowledgements}  The majority of the work for this paper was done while the authors shared an office at the Technische Universit\"at M\"unchen.  We would like to thank the university for providing us with the time and freedom to explore this project.  We are also grateful to Maud De Visscher for explaining the results of \cite{CD} in the case $\delta=0$.  The second author would like to thank Victor Ostrik and Jon Brundan for many valuable conversations concerning this paper.  

\section{Category-theoretic preliminaries}\label{preliminaries}

Let $\K$ denote a field of characteristic zero.  A category is said to be \newterm{$\K$-linear} if the Hom sets are equipped with the structure of vector spaces over the field $\K$ in such a way that composition of morphisms is bilinear.

\subsection{Monoidal categories}
For any category $\cat C$, let $\flip_\cat C : \cat C \times \cat C \to \cat C \times \cat C$ denote the functor $(X,Y) \mapsto (Y,X)$.
A \newterm{monoidal category} is a tuple $(\cat C, \otimes, \unit, \braid)$ where $\cat C$ is a category,  
$$ T := - \otimes - : \cat C \times \cat C \to \cat C$$
is a bifunctor and $\unit$ is a distinguished object (the \newterm{unit for $T$}), satisfying
$$ T \circ ( T \times \id_\cat C) = T \circ (\id_\cat C \times T), \qquad \unit \otimes - = \id_{\cat C} = - \otimes \unit,$$ 
and $ \braid : T \nattrans T \circ \flip_\cat C$ is a natural isomorphism (the \newterm{symmetric braiding}) satisfying
\begin{eqnarray*}
(\id_Y \otimes \braid_{X,Z}) \circ (\braid_{X,Y} \otimes \id_Z) &=& \braid_{X, Y \otimes Z}, \qquad \braid_{Y,X} \circ \braid_{X,Y} = \id_{X \otimes Y},\\
(\braid_{X,Z} \otimes \id_Y) \circ (\id_X \otimes \braid_{Y,Z}) &=& \braid_{X \otimes Y, Z},
\end{eqnarray*}
for all objects $X,Y,Z$ of $\cat C$.
When no confusion arises, we may write $\cat C$ for both the underlying category and monoidal category
$(\cat C, \otimes_\cat C, \unit_\cat C, \braid_\cat C)$ and omit the subscript $\cat C$ when it is implicit.
A monoidal category, in our sense, is elsewhere called a strict, symmetric monoidal category.

A \newterm{monoidal functor} is a tuple $(F, \eta, \alpha)$ where $F: \cat C \to \cat{D}$ is a functor, $\cat C, \cat{D}$ are monoidal categories, $\eta : F \circ T_\cat C \nattrans T_\cat{D} \circ (F \times F)$ is a natural transformation and $\alpha : F(\unit_\cat C) \to \unit_\cat{D}$ is an isomorphism such that
\begin{eqnarray*}
&&\eta_{Y,X} \circ F(\braid_{X,Y})  = \braid_{FX, FY} \circ \eta_{X,Y}, \quad
\id_{FX} \otimes (\alpha \circ \eta_{X, \unit}) = \id_{FX}, \\
&&(\eta_{X,Y} \otimes \id_{FZ}) \circ \eta_{X \otimes Y, Z} = (\id_{FX} \otimes \eta_{Y,Z}) \circ \eta_{X, Y \otimes Z}.
\end{eqnarray*}
The monoidal functor is \newterm{strict} if $F(X \otimes Y) = FX \otimes FY$  and $\eta_{X,Y} = \id_{FX \otimes FY}$ for all objects $X,Y$ of $\cat C$, $F\unit = \unit$ and $\alpha = \id_\unit$.
A monoidal functor, in our sense, is elsewhere called a non-strict symmetric monoidal functor.

Let $\cat C, \cat{D}$ be monoidal categories and let 
$$ F = (F, \eta, \alpha), F' = (F', \eta', \alpha') : \cat C \to \cat{D}$$
be monoidal functors. 
A \newterm{monoidal natural transformation} $\epsilon: F \nattrans F'$ is a natural transformation of the underlying functors $F \nattrans F'$ such that $\epsilon_\unit = (\alpha')^{-1} \circ \alpha$ and
$$ \eta'_{X,Y} \circ \epsilon_{X \otimes Y} = \epsilon_X \otimes \epsilon_Y \circ \eta_{X,Y},$$
for all objects $X,Y$.

As in the following examples and throughout, we use monoidal categories in place of their familiar, non-strict, counterparts when convenient.
This is without loss of generality, by Maclane's Coherence Theorem (see e.g. \cite{MacLane}).

\begin{example}\label{vectismonoidal}
For finite-dimensional vector spaces $U$, $V$ over $\K$, write $U \otimes V$ for the usual tensor product.
Define
$ \braid_{U,V} : U \otimes V \to V \otimes U$, by 
$u \otimes v \mapsto v \otimes u $
for all $u \in U$ and $v \in V$, and let $\unit$ denote a one-dimensional vector space.
Let $\Vect \K$ denote the category of finite-dimensional vector spaces and linear maps over $\K$, modulo the identification
\begin{equation}\label{strictidentification}
U \otimes (V \otimes W) = (U \otimes V) \otimes W
\end{equation}
for all objects $U$, $V$ and $W$.
Then $(\Vect \K,  \otimes, \unit, \braid)$ is a monoidal category.
\end{example}
\begin{example}\label{svectismonoidal}
Recall that a superspace over $\K$ is a $\Z / 2 \Z$-graded vector space $U = \psspace U 0 \oplus \psspace U 1$.
Elements of $\psspace U 0$ and $\psspace U 1$ are said to be \newterm{even} and \newterm{odd}, respectively.
An element of $U$ is said to be \newterm{pure} if it is either even or odd.
For $u \in \psspace U i$, write $\parity u = i$ for the \newterm{parity} of $u$.
A morphism of superspaces is simply a morphism of vector spaces.  
If $\varphi : U \to V$ is a superspace morphism, then declare $\varphi$ to be pure and of parity $\parity \varphi$ if 
$$ \varphi : \psspace U i \to \psspace V {i + \parity \varphi}, \qquad i=0,1.$$
Thus the vector space of all superspace morphisms $U \to V$ becomes itself superspace.
Given superspaces $U,V$, let $U \otimes V$ denote the their tensor product as vector spaces, considered as a superspace with the grading
$$ \psspace {(U \otimes V)} i = \bigoplus_{\parity j + \parity k = \parity i} \psspace U j \otimes \psspace V k,$$
and define $\braid_{U,V} : U \otimes V \to V \otimes U$ via
$$ \braid_{U,V} : u \otimes v \mapsto (-1)^{\parity u \parity v} v \otimes u$$
(this is the so-called \newterm{rule of signs}).
Write $\unit$ for a one-dimensional purely even superspace.
Finally, write $\SVect \K$ for the category of all finite-dimensional superspaces and their morphisms, modulo the identification \eqref{strictidentification} for all superspaces $U$,$V$ and $W$.
Then $(\SVect \K,  \otimes, \unit, \braid)$ is a monoidal category.
\end{example}

\subsection{Tensor categories}\label{tensorcat}
Let $\cat C$ be a monoidal category and $X$ an object of $\cat C$.
A \newterm{dual} of $X$ is a tuple $(X^*, \ev_X, \coev_X)$ where $X^*$ is an object of $\cat C$ and $\ev_X$, $\coev_X$ are morphisms
$$ \ev_X : X^* \otimes X \to \unit, \qquad \coev_X : \unit \to X \otimes X^*,$$
of $\cat C$ such that
\begin{equation}\label{triangleequalities}
 ({\id_X \otimes \ev_X}) \circ ({\coev_X \otimes \id_X}) = \id_X, \qquad (\ev_X \otimes \id_{X^*}) \circ (\id_{X^*} \otimes \coev_X) = \id_{X^*}.
\end{equation}
The category $\cat C$ is \newterm{rigid} if every object has a dual.

A \newterm{tensor category} is a rigid $\K$-linear monoidal category such that $\End \unit = \K$ and $- \otimes -$ is a bilinear bifunctor.
A \newterm{(strict) tensor functor} is a (strict) monoidal functor that is preadditive.

Let $\varphi : X \to Y$ be a $\cat C$-morphism.  
Duals $(X^*, \ev_X, \coev_X)$ and $(Y^*, \ev_Y, \coev_Y)$ for $X$ and $Y$ define a \newterm{dual morphism} $\varphi^* : Y^* \to X^*$ by
$$ \varphi^* = \ev_Y \otimes \id_{X^*} \circ \id_{Y^*} \otimes \varphi \otimes \id_{X^*} \circ \id_{Y^*} \otimes \coev_X,$$
and one has that
\begin{equation}\label{dualisearoundbends} 
\ev_X \circ (\varphi^* \otimes \id_X) = \ev_Y \circ (\id_{Y^*} \otimes \varphi), \qquad (\id_X \otimes \varphi^*) \circ \coev_X = (\varphi \otimes \id_{Y^*}) \circ \coev_Y.
\end{equation}
Now let $\cat{D}$ be another monoidal category, and $F= (F, \eta, \alpha) : \cat C \to \cat{D}$ a monoidal functor.
Then the functor $F$ and the dual for $X$ in $\cat C$ define a dual 
\begin{equation}\label{imageofadual}
( F(X^*), \ev_{FX}, \coev_{FX})
\end{equation}
for $FX$ in $\cat{D}$, where
$$ \ev_{FX} = \alpha \circ F (\ev_X) \circ \eta^{-1}_{X^*, X} \qquad \coev_{FX} = \eta_{X, X^*} \circ F(\coev_X) \circ \alpha^{-1}. $$
\begin{example}\label{vectistensor}
For any vector space $U$ in $\Vect \K$, let $U^*$ denote the usual linear dual, and define
$$ \ev_U : U^* \otimes U \to \unit, \qquad \ev_U: \lambda \otimes u \mapsto \lambda(u),$$
for all $\lambda \in U^*, u \in U$.
Choose any basis $\{u_i\}$ of $U$, let $\{\lambda_i\}$ denote the basis of $U^*$ orthonormal to it, and define
$$ \coev_U : \unit \to U \otimes U^*, \qquad 1 \mapsto \sum_i u_i \otimes \lambda_i,$$
(this map is independent of the choice of basis).
Then $(U^*, \ev_U, \coev_U)$ is a dual for $U$, and thus $\Vect \K$ is a tensor category.
\end{example}
\begin{example}\label{svectistensor}
For any superspace $V$ in $\SVect \K$, write $U^*$ for the superspace defined by
$$ \psspace {(U^*)} i = (\psspace U i)^*, \qquad i=0,1$$
and define $\ev_U$, $\coev_U$ exactly as in example (1) above, only choosing the basis $\{ u_i \}$ to consist of pure elements.
Then $(U^*, \ev_U, \coev_U)$ is a dual for $U$, and so $\SVect \K$ is a tensor category. 
\end{example}

The following proposition will be useful later.
\begin{proposition}\label{monoidalnattranspropn}
Let $\cat C$ be a rigid monoidal category, $\cat{D}$ a monoidal category and
$$ \epsilon : (F, \eta, \alpha) \nattrans (F', \eta', \alpha')$$
a monoidal natural transformation of monoidal functors $\cat C \to \cat{D}$.
Then $\epsilon$ is a natural isomorphism, and for all objects $X$ in $\cat C$,
$$ \epsilon_{X^*} = ((\epsilon_X)^*)^{-1} = (\epsilon_X^{-1})^*$$
with respect to the duals for $FX$ and $F'X$ defined by \eqref{imageofadual}.
\end{proposition}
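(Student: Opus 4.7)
The plan is to prove invertibility of $\epsilon$ by constructing an inverse by hand using rigidity of $\cat C$, and then to identify $\epsilon_{X^*}$ with the dual of that inverse by purely formal manipulations with $\ev$ and $\coev$.

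First I would extract, from the monoidal-naturality axiom for $\epsilon$ combined with ordinary naturality applied to the structure morphisms $\coev_X : \unit \to X \otimes X^*$ and $\ev_X : X^* \otimes X \to \unit$ in $\cat C$, two key identities in $\cat{D}$:
\begin{equation*}
\coev_{F'X} = (\epsilon_X \otimes \epsilon_{X^*}) \circ \coev_{FX}, \qquad \ev_{FX} = \ev_{F'X} \circ (\epsilon_{X^*} \otimes \epsilon_X).
\end{equation*}
These should drop out by direct substitution using the formulas $\coev_{FX} = \eta_{X,X^*} \circ F(\coev_X) \circ \alpha^{-1}$ and $\ev_{FX} = \alpha \circ F(\ev_X) \circ \eta^{-1}_{X^*, X}$ (and their counterparts for $F'$), together with the identity $\alpha' \circ \epsilon_\unit = \alpha$ built into the definition of a monoidal natural transformation.

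Next, these identities allow me to write down a zigzag candidate for the inverse of $\epsilon_X$, namely
\begin{equation*}
\psi_X := (\id_{FX} \otimes \ev_{F'X}) \circ (\id_{FX} \otimes \epsilon_{X^*} \otimes \id_{F'X}) \circ (\coev_{FX} \otimes \id_{F'X}) : F'X \to FX.
\end{equation*}
The identity $\psi_X \circ \epsilon_X = \id_{FX}$ should follow by substituting the second of the two key identities and then invoking the triangle equality \eqref{triangleequalities}; dually, $\epsilon_X \circ \psi_X = \id_{F'X}$ should follow by using bifunctoriality to slide $\epsilon_X$ past the outer $\ev_{F'X}$, then collapsing $(\epsilon_X \otimes \epsilon_{X^*}) \circ \coev_{FX}$ into $\coev_{F'X}$ via the first identity and again applying \eqref{triangleequalities}. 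This establishes that $\epsilon$ is a natural isomorphism.

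Finally, to identify $\epsilon_{X^*}$ with $((\epsilon_X)^*)^{-1}$, I would compare $\ev_{FX} = \ev_{F'X} \circ (\epsilon_{X^*} \otimes \epsilon_X)$ with the defining relation $\ev_{FX} \circ ((\epsilon_X)^* \otimes \id_{FX}) = \ev_{F'X} \circ (\id_{F'X^*} \otimes \epsilon_X)$ for the dual morphism from \eqref{dualisearoundbends}: substituting the latter into the former and invoking the universal property of $\ev_{FX}$, i.e.\ the bijection $\Hom(Z, FX^*) \cong \Hom(Z \otimes FX, \unit)$, forces $(\epsilon_X)^* \circ \epsilon_{X^*} = \id_{FX^*}$. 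Combined with the invertibility of $\epsilon_{X^*}$ already established, this yields $\epsilon_{X^*} = ((\epsilon_X)^*)^{-1}$; the remaining equality $((\epsilon_X)^*)^{-1} = (\epsilon_X^{-1})^*$ is the general contravariance $(f \circ g)^* = g^* \circ f^*$ applied to $\epsilon_X$ and its inverse. The main obstacle I expect is simply the bookkeeping needed to unpack the coherence data $\eta, \eta', \alpha, \alpha'$ cleanly in the derivation of the two initial identities; everything past that reduces to standard zigzag arguments in a rigid monoidal category.
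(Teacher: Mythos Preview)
Your proposal is correct and rests on the same two key identities the paper uses, namely $\ev_{FX} = \ev_{F'X} \circ (\epsilon_{X^*} \otimes \epsilon_X)$ and $(\epsilon_X \otimes \epsilon_{X^*}) \circ \coev_{FX} = \coev_{F'X}$. The only difference is organizational: you first build a separate zigzag inverse $\psi_X$ to establish invertibility and then argue the formula one-sidedly, whereas the paper skips the auxiliary $\psi_X$ altogether and simply checks directly, using those same identities and the triangle equalities, that $(\epsilon_X)^*$ is a two-sided inverse of $\epsilon_{X^*}$, obtaining invertibility and the formula simultaneously.
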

\begin{proof}
Let $X$ be an object of $\cat C$ and $(X^*, \ev_X, \coev_X)$ a dual for $X$.
With respect to the duals for $FX$ and $F'X$ defined by \eqref{imageofadual}, one has
$$ (\epsilon_X)^* = (\ev_{F'X} \otimes \id_{FX^*}) \circ (\id_{F'X^*} \otimes \epsilon_X \otimes \id_{FX^*}) \circ (\id_{F'X^*} \otimes \coev_{FX}).$$
The second equality of the claim follows from the first by functoriality.  We demonstrate the first.
Using that $\epsilon$ is both monoidal and natural, one sees that
\begin{equation}\label{inversioneqns}
\ev_{F'X} \circ (\epsilon_{X^*} \otimes \epsilon_X) = \ev_{FX}, \qquad (\epsilon_X \otimes \epsilon_{X^*}) \circ \coev_{FX} = \coev_{F'X}.
\end{equation}
Thus 
\begin{eqnarray*}
(\epsilon_X)^* \circ \epsilon_{X^*} &=& (\epsilon_X)^* \circ (\epsilon_{X^*} \otimes \id_\unit) \\
&=& (\ev_{F'X} \otimes \id_{FX^*}) \circ (\epsilon_{X^*} \otimes \epsilon_X \otimes \id_{FX^*}) \circ (\id_{FX^*} \otimes \coev_{FX}) \\
&=& \id_{FX}.
\end{eqnarray*}
One shows that $\epsilon_{X^*} \circ (\epsilon_X)^* = (\id_\unit \otimes \epsilon_{X^*}) \circ (\epsilon_X)^* = \id_{F'X}$ in a similar fashion.
\end{proof}

\subsection{Categorical dimension}\label{dimension}
Suppose that $\cat C$ is a rigid monoidal category.
For any object $X$ of $\cat C$ and $\varphi \in \End_\cat C X$, define the \newterm{categorical trace} $\tr \varphi$ by
$$ \tr \varphi = \ev_X \circ \braid_{X, X^*} \circ (\varphi \otimes \id_{X^*}) \circ \coev_X \quad \in \End_\cat C \unit, $$
and define the \newterm{categorical dimension} of $X$ by $\dim X = \tr ( \id_X)$.
The categorical trace and dimension do not depend upon the choice of dual for $X$, and are preserved by any monoidal functor.
One has, moreover, that 
$$ \dim (X \otimes Y) = \dim X \cdot \dim Y,$$ 
for all objects $X, Y$.  If $\cat C$ is a tensor category,
then one has additionally that
$$\tr_X : \End X \to \End \unit = \K$$ 
is a homomorphism of abelian groups for any object $X$, and 
$$\dim (X \oplus Y) = \dim X + \dim Y,$$ 
whenever the biproduct of objects $X$ and $Y$ exists.

\begin{example}
The categorical trace and dimension in $\Vect \K$ coincide with their elementary counterparts.
\end{example}
\begin{example}
The categorical trace and dimension in $\SVect \K$ coincide with supertrace and superdimension, respectively.
That is, if $\varphi \in \End U$ is an endomorphism in $\SVect \K$, then $\tr \varphi = \tr \psspace \varphi 0 - \tr \psspace \varphi 1$, where the summands are traces of vector space endomorphisms, and so $\dim U =  \dim_\K \psspace U 0 -  \dim_\K \psspace U 1$, where the summands are vector space dimensions.
\end{example}

When $\cat C = \Vect \K$ is the category of finite-dimensional vector spaces, categorical trace and dimension coincide with their elementary counterparts.
As described in \S\ref{superrepcategory}, when $\cat C = \SVect \K$ is the category of super vector spaces, the categorical dimension of an object coincides with the superdimension.

\subsection{Functor categories}
For categories $\cat C, \cat{D}$, define the following categories whose objects are functors $\cat C \to \cat{D}$ of the specified type and whose morphisms are natural transformations of the specified type:
\begin{center}
\begin{tabular}{l | c | c | c}
{\bf category} & $\cat C, \cat{D}$ &{\bf  functors} & {\bf nat.\ trans.\ } \\[2pt] \hline
$\funcat (\cat C, \cat{D})$ & any & any & any \\[2pt] 
$\monfuncat (\cat C, \cat{D})$ & monoidal & monoidal & monoidal\\[2pt] 
$\preaddfuncat (\cat C, \cat{D})$ & preadditive & preadditive & any \\[2pt] 
$\tensorfuncat (\cat C, \cat{D})$ & tensor & tensor & monoidal \\[2pt] 
$\strtensorfuncat (\cat C, \cat{D})$ & tensor & strict tensor & monoidal
\end{tabular}
\end{center}

\subsection{The additive envelope}\label{additiveenvelope}
Let $\cat C$ be a preadditive category.
An \newterm{additive envelope} of $\cat C$ is a pair $(\add {\cat C}, \iota)$ where $\add {\cat C}$ is an additive category and $\iota : \cat C \to \add {\cat C}$ is a fully-faithful preadditive functor such that for any additive category $\cat{D}$, the ``restriction functor''
\begin{equation}\label{additiveenvelopeup1}
 \preaddfuncat ( \add {\cat C}, \cat{D}) \xrightarrow{\sim} \preaddfuncat(\cat C, \cat{D}), \qquad F \mapsto F \circ \iota, \qquad (\eta: F \nattrans F') \mapsto \eta_\iota,
\end{equation}
is an equivalence of categories\footnote{preadditive functors necessarily preserve biproducts, when they exist.}.
Thus an additive envelope is unique up to equivalence of categories, when it exists, and the category $\cat C$ may be identified with a full subcategory of $\add {\cat C}$ via the functor $\iota$.

The additive envelope may be constructed as follows.
Let $\add {\cat C}$ denote the category with objects $X = (X_j)$ given by finite-length tuples of objects  from $\cat C$ and morphisms
$$ \varphi : (X_j) \to (Y_i), \qquad \varphi = (\varphi_{i,j}), \qquad \varphi_{i,j} \in \Hom_\cat C (X_j, Y_i),$$
given by ``matrices'' of morphisms from $\cat C$, composed via matrix multiplication, i.e.\
$$ (\varphi \circ \psi)_{i,j} = \sum_k \varphi_{i,k} \circ \psi_{k,j}.$$
Addition of morphisms in $\add {\cat C}$ is defined component-wise by addition in $\cat C$.
Concatenation of tuples defines a biproduct on $\add {\cat C}$ where the injection and projection maps are matrices built from identity and zero morphisms of $\cat C$ in a straightforward manner. 
The empty tuple is a zero object for this biproduct, and so $\add {\cat C}$ is an additive category.
The functor $\iota : \cat C \to \add {\cat C}$ defined by sending any $\cat C$-object $X$ to the length-1 tuple $(X)$ and any $\cat C$-morphism to the $1 \times 1$-matrix $(\varphi)$ is fully-faithful, and is such that the universal property \eqref{additiveenvelopeup1} holds.
It is straightforward to show that if $\cat C$ is a tensor category, then $\add {\cat C}$ can also be endowed with the structure of a tensor category such that $\iota$ is a tensor functor and for any additive tensor category $\cat{D}$, 
\begin{equation}\label{additiveenvelopeup2}
 \tensorfuncat ( \add {\cat C}, \cat{D}) \xrightarrow{\sim} \tensorfuncat(\cat C, \cat{D}), \qquad F \mapsto F \circ \iota, \qquad (\eta: F \nattrans F') \mapsto \eta_\iota,
\end{equation}
is an equivalence of categories.

\subsection{Splittings of Idempotents \& the Karoubi envelope}\label{karoubienvelope}
Let $\cat C$ be a category, $X$ an object of $\cat C$ and $e^2 = e \in \End_\cat C (X)$; one says that $e$ is an \newterm{idempotent} of $\cat C$.
A \newterm{splitting} of $e$ is a tuple $(\im e, \inj_e, \proj_e)$ where $\im e$ is an object of $\cat C$ and
$$ \inj_e : \im e \to X, \qquad \proj_e : X \to \im e,$$
are morphisms of $\cat C$ such that 
\begin{enumerate}\label{splittingrelns}
\item\label{idempotentpart1} $e = \inj_e \circ \proj_e$, \quad \item\label{idempotentpart2} $\id_{\im e} = \proj_e \circ \inj_e.$
\end{enumerate}
One says that the idempotent $e$ \newterm {splits}, and calls $\im e$ the \newterm{image} of $e$.
Given $e^2 = e$ and any tuple satisfying part \eqref{idempotentpart1}, part \eqref{idempotentpart2} is equivalent to $\inj_e$and $\proj_e$ being mono- and epi-morphisms, respectively.

A category is said to be \newterm{Karoubi} if every idempotent of the category splits.
A \newterm{Karoubi envelope} of a category $\cat C$ is a tuple $(\kar {\cat C}, \iota)$ where $\kar {\cat C}$ is Karoubi and $\iota : \cat C \to \kar {\cat C}$ is a fully-faithful functor such that for any Karoubi category $\cat{D}$, the ``restriction functor''
\begin{equation}\label{karoubienvelopeup1}
 \funcat ( \kar {\cat C}, \cat{D}) \xrightarrow{\sim} \funcat(\cat C, \cat{D}), \qquad F \mapsto F \circ \iota, \qquad (\eta: F \nattrans F') \mapsto \eta_\iota,
\end{equation}
is an equivalence of categories.  Thus a Karoubi envelope is unique upto equivalence of categories, when it exists, and the category $\cat C$ may be identified with a full subcategory of $\kar {\cat C}$ via the functor $\iota$.

The Karoubi envelope of any category $\cat C$ can be constructed as follows.
Let $\kar {\cat C}$ denote the category whose objects are tuples $(X, e)$ where $X$ is an object of $\cat C$ and $e \in \End_\cat C X$ is an idempotent, and morphisms
$$ \Hom_{\kar {\cat C}} ( (X,e), (Y,f) ) = \{ \varphi \in \Hom_\cat C (X,Y) | f \circ \varphi = \varphi = \varphi \circ e \}.$$
Write $\varphi_0$ for $\varphi : (X,e) \to (Y,f)$ considered as a morphism of $\cat C$.
Then $\kar {\cat C}$-morphisms $\varphi, \psi$ are equal if and only if they have the same source and target in $\kar {\cat C}$ and $\varphi_0 = \psi_0$.
The composition of morphisms in $\kar {\cat C}$ is inherited from $\cat C$, that is, $\varphi \circ \psi$ is defined by the source of $\psi$, the target of $\varphi$, and $(\varphi \circ \psi)_0 = \varphi_0 \circ \psi_0$; one has that $(\id_{(X,e)})_0 = e$.
Any idempotent $\varphi \in \End_{\kar {\cat C}} (X,e)$ has a splitting $(\im \varphi, \inj_\varphi, \proj_\varphi)$ defined by $\im \varphi = (X,\varphi)$ and $(\inj_\varphi)_0 = (\proj_\varphi)_0 = \varphi_0$.
Thus $\kar {\cat C}$ is a Karoubi category.
The functor $\iota : \cat C \to \kar {\cat C}$ defined by $\iota(X) = (X, \id_X)$ and $(\iota (\varphi))_0 = \varphi$ is fully-faithful fully-faithful, and is such that the universal property \eqref{karoubienvelopeup1} holds.
It can be shown that if $\cat C$ is a tensor category, then $\kar {\cat C}$ can also be endowed with the structure of a tensor category such that $\iota$ is a tensor functor and for any Karoubi tensor category $\cat{D}$
\begin{equation}\label{karoubienvelopeup2}
 \tensorfuncat ( \kar {\cat C}, \cat{D}) \xrightarrow{\sim} \tensorfuncat(\cat C, \cat{D}), \qquad F \mapsto F \circ \iota, \qquad (\eta: F \nattrans F') \mapsto \eta_\iota,
\end{equation}
is an equivalence of categories.
Moreover, if $\cat C$ is an additive tensor category, then so is $\kar {\cat C}$.

Idempotents $e, e'$ of a ring are said to be \newterm{orthogonal} if $e e' = e' e = 0$.
An idempotent is said to be \newterm{primitive} if it is non-zero and can not be written as the sum of two non-zero orthogonal idempotents.
The proof of the following lemma is elementary.
\begin{lemma}\label{karoubilemma}
Let $\cat C$ be a $K$-linear Karoubi category and let $X$ an object of $\cat C$.  Then $X$ is indecomposable if and only if $\id_X$ is a primitive idempotent.
\end{lemma}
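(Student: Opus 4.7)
The plan is to prove both directions by translating between decompositions $X \cong Y \oplus Z$ and partitions $\id_X = e_1 + e_2$ of the identity into orthogonal idempotents, using the Karoubi property to pass from idempotents to their images.

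For the direction \emph{$\id_X$ primitive $\Rightarrow$ $X$ indecomposable}, I would start from a biproduct decomposition $X \cong Y \oplus Z$ with injections $\inj_Y, \inj_Z$ and projections $\proj_Y, \proj_Z$, and set $e_Y = \inj_Y \circ \proj_Y$, $e_Z = \inj_Z \circ \proj_Z$. The biproduct relations $\proj_Y \circ \inj_Y = \id_Y$, $\proj_Z \circ \inj_Z = \id_Z$, $\proj_Y \circ \inj_Z = 0$, $\proj_Z \circ \inj_Y = 0$, $\inj_Y \circ \proj_Y + \inj_Z \circ \proj_Z = \id_X$ immediately imply that $e_Y$ and $e_Z$ are orthogonal idempotents summing to $\id_X$. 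If $Y$ and $Z$ were both non-zero, then $e_Y$ and $e_Z$ would both be non-zero (since $\proj_Y \circ \inj_Y = \id_Y \neq 0$, forcing $e_Y \neq 0$, and similarly for $e_Z$), contradicting primitivity of $\id_X$. Primitivity also forces $\id_X \neq 0$, hence $X \neq 0$.

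For the reverse direction, suppose $X$ is indecomposable and $\id_X = e_1 + e_2$ with $e_1, e_2$ orthogonal idempotents. Indecomposability gives $X \neq 0$, so $\id_X \neq 0$. Using that $\cat C$ is Karoubi, split $e_i$ as $(\im e_i, \inj_{e_i}, \proj_{e_i})$ for $i=1,2$. I would then check that these data exhibit $X$ as the biproduct $\im e_1 \oplus \im e_2$. The relations $\proj_{e_i} \circ \inj_{e_i} = \id_{\im e_i}$ and $\inj_{e_1} \circ \proj_{e_1} + \inj_{e_2} \circ \proj_{e_2} = e_1 + e_2 = \id_X$ are immediate from the splitting definition. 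The cross-terms vanish by the trick $\proj_{e_1} \circ \inj_{e_2} = (\proj_{e_1} \circ \inj_{e_1}) \circ \proj_{e_1} \circ \inj_{e_2} \circ (\proj_{e_2} \circ \inj_{e_2}) = \proj_{e_1} \circ (e_1 e_2) \circ \inj_{e_2} = 0$, and symmetrically for the other cross-term. Thus $X \cong \im e_1 \oplus \im e_2$; indecomposability forces one summand, say $\im e_2$, to be zero, which in turn forces $e_2 = \inj_{e_2} \circ \proj_{e_2} = 0$. Hence $\id_X$ admits no non-trivial decomposition into orthogonal idempotents, so it is primitive.

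The only mildly delicate point is verifying the vanishing of the cross-terms $\proj_{e_i} \circ \inj_{e_j}$ ($i \neq j$) so that the splittings genuinely assemble into a biproduct; everything else is a direct bookkeeping exercise using the defining relations of splittings and orthogonal idempotents, together with the assumption that $\cat C$ is $\K$-linear so that sums and zero morphisms behave as expected.
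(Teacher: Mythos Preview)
Your argument is correct and is exactly the standard elementary verification; the paper in fact omits the proof entirely, remarking only that it ``is elementary.'' One tiny wrinkle worth aligning with the paper's conventions: the paper's definition of indecomposable requires only that in any biproduct decomposition $X = X_1 \oplus X_2$ some $\inj_i \circ \proj_i = 0$, so in your reverse direction indecomposability already gives $e_i = \inj_{e_i}\circ\proj_{e_i} = 0$ directly (rather than first concluding $\im e_i = 0$); and strictly speaking that definition does not exclude $X = 0$, so your line ``indecomposability gives $X \neq 0$'' is a convention the paper leaves implicit rather than something that follows from its stated definition. Neither point affects the substance of your proof.
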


\begin{proposition}\label{karoubipropn}
Let $\cat C$ denote a category considered as a full subcategory of its Karoubi envelope $\kar{\cat C}$, and choose splittings for the idempotents of $\cat C$.  
Then any object of $\kar{\cat C}$ is isomorphic to the image of an idempotent of $\cat C$.
\end{proposition}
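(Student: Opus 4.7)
The plan is to take an arbitrary object $(X,e)$ of $\kar{\cat C}$ and exhibit an explicit isomorphism in $\kar{\cat C}$ between it and $\iota(\im e)$, where $(\im e, \inj_e, \proj_e)$ is the chosen splitting of $e$ in $\cat C$. Since $\iota$ is fully-faithful, $\iota(\im e)$ is the object $(\im e, \id_{\im e})$, so the problem reduces to producing mutually inverse $\kar{\cat C}$-morphisms between $(X,e)$ and $(\im e, \id_{\im e})$.

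The natural candidates are the morphisms $\varphi : (X,e) \to (\im e, \id_{\im e})$ and $\psi : (\im e, \id_{\im e}) \to (X,e)$ defined by $\varphi_0 = \proj_e$ and $\psi_0 = \inj_e$. First I would verify that these $\cat C$-morphisms really live in the claimed $\Hom$ sets of $\kar{\cat C}$, i.e.\ that they satisfy the ``absorption'' identities $\id_{\im e} \circ \proj_e = \proj_e = \proj_e \circ e$ and $e \circ \inj_e = \inj_e = \inj_e \circ \id_{\im e}$. The only non-trivial checks here are $\proj_e \circ e = \proj_e$ and $e \circ \inj_e = \inj_e$, both of which follow immediately from $e = \inj_e \circ \proj_e$ combined with $\id_{\im e} = \proj_e \circ \inj_e$.

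Once $\varphi$ and $\psi$ are established as $\kar{\cat C}$-morphisms, I would check they are mutually inverse by passing to the underlying $\cat C$-morphisms: $(\varphi \circ \psi)_0 = \proj_e \circ \inj_e = \id_{\im e}$ which is $(\id_{(\im e,\id_{\im e})})_0$, while $(\psi \circ \varphi)_0 = \inj_e \circ \proj_e = e$ which is $(\id_{(X,e)})_0$. Since $\kar{\cat C}$-morphisms are determined by their underlying $\cat C$-morphisms together with their source and target, this gives $\psi \circ \varphi = \id_{(X,e)}$ and $\varphi \circ \psi = \id_{(\im e, \id_{\im e})}$, as required.

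There is no real obstacle here; the argument is essentially a transcription of the splitting relations into the language of the Karoubi envelope. The only thing to be careful about is not to confuse the identity endomorphism of $(X,e)$ in $\kar{\cat C}$ (whose underlying $\cat C$-morphism is $e$, not $\id_X$) with the identity of $X$ in $\cat C$, which is precisely the book-keeping that makes the two composition computations line up.
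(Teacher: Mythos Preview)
Your argument is essentially correct, but there is a conceptual slip in the setup. You write that the chosen splitting $(\im e,\inj_e,\proj_e)$ lives ``in $\cat C$'' and then apply $\iota$ to get $\iota(\im e)=(\im e,\id_{\im e})$. This is not right: the whole point of passing to $\kar{\cat C}$ is that idempotents of $\cat C$ need not split in $\cat C$; the splittings in the hypothesis are chosen in $\kar{\cat C}$, so $\im e$ is an object of $\kar{\cat C}$ and $\iota(\im e)$ is meaningless. In the explicit model, $\im e$ is some pair $(Z,g)$ with $g$ not necessarily an identity. Fortunately your computation survives this correction verbatim: the same underlying $\cat C$-morphisms $(\proj_e)_0$ and $(\inj_e)_0$, now with target/source $(Z,g)$ rather than $(\im e,\id_{\im e})$, still satisfy the required absorption identities (the extra conditions $g\circ(\proj_e)_0=(\proj_e)_0$ and $(\inj_e)_0\circ g=(\inj_e)_0$ are exactly what it means for $\proj_e,\inj_e$ to be $\kar{\cat C}$-morphisms), and the two composites still give $e=(\id_{(X,e)})_0$ and $g=(\id_{(Z,g)})_0$.

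The paper's proof takes a different and shorter route: it observes that for the \emph{constructed} splittings in the explicit model one has $\im e=(X,e)$ on the nose, so there is nothing to prove; it then appeals to the uniqueness of the Karoubi envelope up to an equivalence compatible with the embeddings of $\cat C$ and with the chosen splittings to transport the statement to any other model and any other choice of splittings. Your approach has the virtue of producing an explicit isomorphism and avoiding the transport step, at the cost of working inside the explicit model; the paper's approach is cleaner but less constructive.
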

\begin{proof}
The claim is true of the Karoubi envelope constructed explicitly above, with its constructed splittings.
Moreover, this Karoubi envelope is equivalent to any other Karoubi envelope of $\cat C$ via a functor compatible with the identifications of $\cat C$ as a full subcategory and the choices of splittings.
\end{proof}

\subsection{Krull-Schmidt categories}
A \newterm{$\K$-linear Krull-Schmidt} category is a category that is $\K$-linear, additive and Karoubi, with finite-dimensional Hom-spaces.
Thus, if $\cat C$ is any $\K$-linear category with finite-dimensional Hom-spaces, then the Karoubi envelope of the additive envelope $\kar{(\add{\cat C})}$ is an example of a $\K$-linear Krull-Schmidt category.
Recall that an object $X$ of a preadditive category $\cat C$ is \newterm{indecomposable} if for any biproduct decomposition $X = X_1 \oplus X_2$ with associated maps
$ \inj_i : X_i \to X, \proj_i : X \to X_i$, there exists $i \in \{ 1,2 \}$ with $\inj_i \circ \proj_i = 0 \in \End X$.
As indicated by the following proposition, objects in Krull-Schmidt categories possess essentially unique biproduct decompositions into indecomposable summands, as in the familiar case of finitely-generated modules over a finite-dimensional algebra.

\begin{proposition}\label{indidem}
Let $\cat C$ be a $\K$-linear category considered as a full subcategory of its Karoubi envelope $\kar{\cat C}$, let $A$ be an object of $\cat C$ and $e, e', e'' \in \End_{\cat C} A$ be idempotents with splittings chosen in $\kar{\cat C}$.
Then:

(1) $\im e$ is indecomposable if and only if $e$ is a primitive idempotent, and up to isomorphism, all indecomposables are so obtained.

(2) if $e = e' + e''$ and $e', e''$ are orthogonal, then $\im e \cong \im {e'} \oplus \im {e''}.$

(3) Suppose further that $\End_{\cat C} A$ is finite dimensional.  Then $\im e \cong \im {e'}$ if and only if $e$ and $e'$ are conjugate in $\End_{\cat C} A$.
\end{proposition}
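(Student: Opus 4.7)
The plan is to prove the three parts in turn, with the real work concentrated in part~(3).

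For part~(1), set $R = \End_{\cat C} A$. The map $\varphi \mapsto \inj_e \varphi \proj_e$ is a ring isomorphism $\End_{\kar{\cat C}}(\im e) \xrightarrow{\sim} eRe$ carrying $\id_{\im e}$ to $e$. Orthogonal idempotent decompositions of $e$ in $R$ and in $eRe$ coincide: if $e = f_1 + f_2$ with $f_1,f_2$ orthogonal idempotents in $R$, then multiplying by $f_i$ on either side shows $e f_i = f_i = f_i e$, so $f_i \in eRe$; the converse is immediate. Hence $e$ is primitive in $R$ iff it is primitive in $eRe$, iff $\im e$ is indecomposable by Lemma~\ref{karoubilemma}. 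The statement that every indecomposable arises this way follows by combining this equivalence with Proposition~\ref{karoubipropn}.

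For part~(2), the splitting relations give $e' \inj_{e'} = \inj_{e'} \proj_{e'} \inj_{e'} = \inj_{e'}$ and $\proj_{e'} e' = \proj_{e'}$, and orthogonality then yields $e'' \inj_{e'} = (e'' e') \inj_{e'} = 0$ and $\proj_{e'} e'' = 0$, together with the symmetric identities. Setting $\iota_1 = \proj_e \inj_{e'}$, $\pi_1 = \proj_{e'} \inj_e$, and analogously $\iota_2, \pi_2$ for $e''$, a direct check using these identities yields $\pi_i \iota_j = \delta_{ij}\, \id$ and $\iota_1 \pi_1 + \iota_2 \pi_2 = \proj_e (e' + e'') \inj_e = \proj_e \, e \, \inj_e = \id_{\im e}$, exhibiting the biproduct $\im e \cong \im{e'} \oplus \im{e''}$.

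For part~(3), the direction $(\Leftarrow)$ is immediate: given $e' = g e g^{-1}$ with $g$ invertible in $R$, the maps $\alpha = \proj_{e'} g \inj_e$ and $\beta = \proj_e g^{-1} \inj_{e'}$ satisfy $\beta \alpha = \proj_e (g^{-1} e' g) \inj_e = \proj_e \, e \, \inj_e = \id_{\im e}$, and similarly $\alpha \beta = \id_{\im{e'}}$.

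The main obstacle is the converse, which is where the hypothesis $\dim_\K R < \infty$ is essential. Given an iso $\varphi : \im e \to \im{e'}$ with inverse $\psi$, I would set $u_0 = \inj_{e'} \varphi \proj_e \in e'Re$ and $v_0 = \inj_e \psi \proj_{e'} \in eRe'$ and verify $u_0 v_0 = e'$, $v_0 u_0 = e$. Consequently, left multiplication by $u_0$ is a right $R$-module isomorphism $eR \xrightarrow{\sim} e'R$. Since $R$ is a finite-dimensional $\K$-algebra, the Krull--Schmidt theorem applies to finitely generated right $R$-modules, and from $R = eR \oplus (1-e)R = e'R \oplus (1-e')R$ one deduces a right $R$-module isomorphism $(1-e)R \cong (1-e')R$, hence elements $u_1 \in (1-e')R(1-e)$, $v_1 \in (1-e)R(1-e')$ with $u_1 v_1 = 1-e'$ and $v_1 u_1 = 1-e$. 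Set $g = u_0 + u_1$ and $h = v_0 + v_1$. The corner conditions force all four cross terms $u_0 v_1$, $u_1 v_0$, $v_0 u_1$, $v_1 u_0$ to vanish (for instance $u_0 v_1 = u_0 (e v_1) = 0$ because $e v_1 = e(1-e) v_1 (1-e') = 0$), so $gh = e' + (1-e') = 1$ and $hg = e + (1-e) = 1$; and $ge = u_0 e + u_1 e = u_0 + 0 = e' u_0 + 0 = e' g$, giving $g e g^{-1} = e'$ as required.
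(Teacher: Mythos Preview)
Your argument is correct in all three parts. Parts (1) and (2) follow essentially the same route as the paper's proof, just packaged differently: the paper sets up a bijection between orthogonal idempotent decompositions of $e$ in $\End_{\cat C} A$ and of $\id_{\im e}$ in $\End_{\kar{\cat C}}(\im e)$ and then invokes Lemma~\ref{karoubilemma} and Proposition~\ref{karoubipropn}, which is exactly what your isomorphism $\End(\im e)\cong eRe$ encodes. Your part (2) is more explicit than the paper's one-line appeal to $\K$-linearity, Karoubi-ness, and the definition of a biproduct, but the content is the same.

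In part (3) both proofs agree on the easy direction and on the first step of the hard one: from an isomorphism $\im e \cong \im e'$, produce an isomorphism of projective $\End_{\cat C}(A)$-modules. The paper passes to left modules and then simply cites the standard fact (its footnote) that idempotents in a finite-dimensional algebra are conjugate iff the corresponding projective modules are isomorphic. You instead prove that fact on the spot, using Krull--Schmidt cancellation on $R = eR \oplus (1-e)R = e'R \oplus (1-e')R$ to obtain a complementary isomorphism $(1-e)R \cong (1-e')R$ and then assembling the conjugating unit $g = u_0 + u_1$ by hand. This is a genuine, if modest, difference: your argument is more self-contained and exhibits the conjugating element explicitly, while the paper's is shorter but leans on an external reference.
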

\begin{proof}
To prove part (1), suppose that $e = e' + e''$ is a sum of orthogonal idempotents in $\cat C$.
Then, since $\kar{\cat C}$ is $\K$-linear,
\begin{equation}\label{indidemeq1}
\pi_e \circ e \circ \iota_e  = \pi_e \circ e' \circ \iota_e + \pi_e \circ e'' \circ \iota_e.
\end{equation}
If, instead, $\id_{\im e} = f' + f'' \in \End {\im e}$ is a sum of orthogonal idempotents, then similarly
\begin{equation}\label{indidemeq2}
\iota_e \circ \id_{\im e} \circ \pi_e = \iota_e \circ f' \circ \pi_e + \iota_e \circ f'' \circ \pi_e,
\end{equation}
since $\kar{\cat C}$ is $\K$-linear.
By the splittings relations \ref{splittingrelns}, equations \eqref{indidemeq1} and \eqref{indidemeq2} give decompositions of $\id_{\im e}$ and $e$, respectively, as sums of orthogonal idempotents.
Moreover, the substitution of the summands of \eqref{indidemeq1} for $f', f''$ in \eqref{indidemeq2} yields the original decomposition $e = e' + e''$, and inversely.
Thus there is a bijective correspondence between orthogonal idempotent decompositions of $e$ in $\cat C$ and $\id_{\im e}$ in $\kar{\cat C}$ that is linear in both summands.
Thus the claim follows from lemma \ref{karoubilemma} and proposition \ref{karoubipropn}.

Part (2) follows immediately from $\kar{\cat C}$ being both $\K$-linear and Karoubi and the definition of a biproduct.

To prove part (3), suppose that $\varphi \in \Aut_{\cat C} (A)$ and $e' = \varphi e \varphi^{-1}$.  Then
$$ \pi_{e'} \circ \varphi \circ \iota_e : \im e \to \im {e'}$$
is an isomorphism with inverse $\pi_{e} \circ \varphi^{-1} \circ \iota_{e'}$.
Conversely, suppose that $\Phi : \im e \to \im{e'}$ is an isomorphism.
Then the map
$$ (\End A) e' \to (\End A) e : \alpha \mapsto \alpha \circ \iota_{e'} \circ \Phi \circ \pi_e \circ e$$
is an isomorphism of left $\End A$-modules with inverse
$$ \beta \mapsto \beta \circ \iota_e \circ \Phi^{-1} \circ \pi_{e'} \circ e'$$
by the splitting relations \ref{splittingrelns}.
Since $\End A$ is a finite-dimensional algebra, it follows that $e$ and $e'$ are conjugate\footnote{Recall that if $\Lambda$ is a finite-dimensional algebra, then idempotents $e, e' \in \Lambda$ are conjugate if and only if $\Lambda e \cong \Lambda e'$ as left $\Lambda$-modules.}.
\end{proof}

Recall that a ring $R$ is \newterm{semiperfect} if $R/J$ is semisimple and idempotents of $R/J$ lift to $R$, where $J=J(R)$ denotes the radical.  In particular, any finite-dimensional algebra is semiperfect (see e.g. \cite{AndersonFuller}).

\begin{proposition}
Let $\cat C$ be a preadditive category and $X$ an object of $\cat C$.  Then
\begin{enumerate}
\item If $\End X$ is local, then $X$ is indecomposable.
\item Suppose further that $\cat C$ is Karoubi and $\End X$ is semiperfect.  Then if $X$ is indecomposable, then $\End X$ is local.
\end{enumerate}
\end{proposition}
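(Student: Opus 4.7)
For part (1), I would use the standard fact that the only idempotents in a local ring are $0$ and $1$. Given any biproduct decomposition $X = X_1 \oplus X_2$ with structure maps $\iota_i, \pi_i$, the composites $e_i = \iota_i \circ \pi_i$ are orthogonal idempotents in $\End X$ summing to $\id_X$. Since $\End X$ is local, one of $e_1, e_2$ must vanish, which is exactly the indecomposability condition.

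For part (2), the plan is to reduce the statement to ring theory: a semiperfect ring $R$ is local if and only if its identity is a primitive idempotent, equivalently if and only if $R/J(R)$ is a division ring. First I would record the preadditive-Karoubi analogue of Lemma \ref{karoubilemma}, namely that $X$ is indecomposable if and only if $\id_X$ is a primitive idempotent in $R := \End X$. The argument parallels that of Lemma \ref{karoubilemma}: an orthogonal decomposition $\id_X = e + f$ with $e, f$ nonzero splits in the Karoubi category to yield a nontrivial biproduct $X \cong \im e \oplus \im f$, and conversely any nontrivial biproduct decomposition produces such an orthogonal pair of nonzero idempotents via $e_i = \iota_i \circ \pi_i$.

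Now assume $X$ is indecomposable and $R$ is semiperfect, and set $J = J(R)$. I claim $R/J$ has only $0$ and $1$ as idempotents. Indeed, if $\bar{e} \in R/J$ were a nontrivial idempotent, semiperfectness would provide a lift to an idempotent $e \in R$; since neither $\bar{e}$ nor $1 - \bar{e}$ is zero in $R/J$, neither $e$ nor $1 - e$ lies in $J$, so both are nonzero. The relation $\id_X = e + (1-e)$ then contradicts primitivity of $\id_X$. By semiperfectness $R/J$ is semisimple, so the Wedderburn--Artin theorem writes it as a finite product $\prod_i M_{n_i}(D_i)$; the absence of nontrivial idempotents forces a single factor with $n_i = 1$. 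Thus $R/J$ is a division ring, every non-unit of $R$ lies in $J$, and $R$ is local.

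None of the individual steps presents a substantive obstacle: the content is just the standard dictionary between idempotents in an endomorphism ring and direct summands of the object, combined with the classical lifting-of-idempotents argument for semiperfect rings. The only point requiring care is the extension of Lemma \ref{karoubilemma} from the $\K$-linear to the merely preadditive setting, but the proof goes through verbatim since splitting a nontrivial orthogonal idempotent decomposition of $\id_X$ yields a genuine biproduct in any Karoubi preadditive category.
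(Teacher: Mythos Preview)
Your proof is correct and follows essentially the same route as the paper. For part (1) the arguments are identical (modulo a harmless typo in the paper's order of composition). For part (2), both the paper and you use the Karoubi hypothesis to show $R=\End X$ has no nontrivial idempotents, then lift idempotents to deduce the same for $R/J$; the only cosmetic difference is that the paper concludes by arguing directly that $R/J$ is simple over itself so $J$ is the unique maximal left ideal, whereas you invoke Wedderburn--Artin to reach the equivalent conclusion that $R/J$ is a division ring.
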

\begin{proof}
Suppose that $X = X_1 \oplus X_2$ and write $\inj_i : X_i \to X$, $\proj_i: X \to X_i$ for the morphisms defining the biproduct decomposition.
Then $e_i = \proj_i \circ \inj_i$ is an idempotent in $\End X$ for $i=1,2$.
As $\End X$ is local, it has no non-trivial idempotents.  Hence $e_1 = 0$ or $e_2 = 0$.

Suppose that $\cat C$ is Karoubi, that $X$ is indecomposable and that $R = \End X$ is semiperfect.
Then the ring $R/J$ is semisimple, that is, is a semisimple module over itself.
Since idempotents in $\cat C$ split, $R = \End X$ has no non-trivial idempotents, and since $R$ is semiperfect, the same is true of $R/J$.
Hence $R/J$ is a simple module over itself, as any non-trivial summand defines a non-trivial idempotent of $R/J$.
Thus $J$ is a maximal left ideal of $R$.
But $J$ is the intersection of all maximal left ideals of $R$, so $J$ is the unique maximal left ideal.
Therefore $R$ is local.
\end{proof}

\begin{corollary}\label{endringind}
Let $\cat C$ be a $\K$-linear Krull-Schmidt category and let $X$ be an object of $\cat C$.
Then $X$ is indecomposable if and only if $\End X$ is local.
\end{corollary}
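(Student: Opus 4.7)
The plan is to derive the corollary as an immediate application of the preceding proposition, using only the features built into the definition of a $\K$-linear Krull-Schmidt category. The forward direction (from $\End X$ local to $X$ indecomposable) is part (1) of that proposition and requires no additional hypothesis, so it is already available.

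For the reverse direction, I would first unpack the hypothesis on $\cat C$: a $\K$-linear Krull-Schmidt category is by definition $\K$-linear, additive, Karoubi, and has finite-dimensional Hom-spaces. In particular $\End X$ is a finite-dimensional $\K$-algebra. The paper already notes (citing \cite{AndersonFuller}) that every finite-dimensional algebra is semiperfect, so $\End X$ is semiperfect. Together with the assumption that $\cat C$ is Karoubi, this is exactly the setup of part (2) of the preceding proposition, so indecomposability of $X$ forces $\End X$ to be local.

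There is no real obstacle here; the corollary is essentially a restatement of the proposition once one observes that the finite-dimensionality built into the Krull-Schmidt hypothesis automatically supplies the semiperfect assumption needed in part (2). The only thing worth being careful about is not to omit either of the two ingredients (Karoubi and semiperfect) when invoking that proposition.
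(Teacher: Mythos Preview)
Your proposal is correct and matches the paper's approach exactly: the corollary is stated in the paper without a separate proof precisely because it follows immediately from the preceding proposition once one notes that the Krull-Schmidt hypotheses make $\End X$ a finite-dimensional (hence semiperfect) algebra and $\cat C$ Karoubi. There is nothing to add.
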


\begin{proposition}\label{distinctind}
Let $\cat C$ be a $\K$-linear Krull-Schmidt category, $\cat D$ a preadditive category and $F : \cat C \to \cat D$ a full preadditive functor.
Then $FX$ is indecomposable object of $\cat D$ if $X$ is an indecomposable object of $\cat C$.
Moreover, if $X,Y$ are indecomposable objects of $\cat C$ such that $FX, FY$ are non-zero isomorphic objects of $\cat D$, then $X \cong Y$.
\end{proposition}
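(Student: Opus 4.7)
The plan is to reduce both assertions to properties of the unital ring homomorphism $F : \End_\cat C X \to \End_\cat D FX$, which is surjective because $F$ is full and preadditive. The Krull-Schmidt hypothesis on $\cat C$ combined with indecomposability of $X$ gives, via Corollary \ref{endringind}, that $\End_\cat C X$ is local; moreover, finite-dimensionality of Hom-spaces (part of the Krull-Schmidt hypothesis) makes $\End_\cat C X$ a finite-dimensional local algebra, so its Jacobson radical $J$ is nilpotent. These two facts drive both parts.

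For the first assertion, I would argue by cases. If $\End_\cat D FX$ is the zero ring, then $\id_{FX} = 0$, which forces $FX$ to be a zero object of $\cat D$; such an object is indecomposable under the given definition. Otherwise, $\id_X$ is not in the kernel of $F : \End_\cat C X \onto \End_\cat D FX$, so this kernel is a proper two-sided ideal of a local ring and is therefore contained in $J$. Hence $\End_\cat D FX$ is a quotient of a local ring by a proper ideal and is itself local, and part (1) of the proposition preceding Corollary \ref{endringind} then yields that $FX$ is indecomposable in $\cat D$.

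For the second assertion, I would fix an isomorphism $\varphi : FX \to FY$ with inverse $\psi$, and use fullness of $F$ to lift them to morphisms $f : X \to Y$ and $g : Y \to X$ with $F(f) = \varphi$ and $F(g) = \psi$. Then $F(gf) = \id_{FX}$, and the key point is that $gf$ must be a unit in $\End_\cat C X$: otherwise it lies in $J$, so nilpotency gives $(gf)^n = 0$ for some $n$, and applying $F$ yields $\id_{FX} = F((gf)^n) = 0$, contradicting $FX \neq 0$. A symmetric argument shows $fg$ is a unit in $\End_\cat C Y$, from which one reads off both a left inverse $(gf)^{-1} g$ and a right inverse $g (fg)^{-1}$ for $f$, so $f$ is an isomorphism and $X \cong Y$.

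The main obstacle is this inversion step: fullness lifts the iso-pair $(\varphi, \psi)$ only to a pair $(f,g)$ whose compositions lift identities, not to an actual mutually inverse pair of $\cat C$-morphisms. Bridging this gap requires the nilpotent radical of the finite-dimensional local algebra $\End_\cat C X$, which is where the Krull-Schmidt hypothesis (beyond just the local endomorphism ring coming from indecomposability) is used in its full strength.
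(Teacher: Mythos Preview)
Your proposal is correct and follows essentially the same route as the paper. Both arguments reduce to the surjection $\End_\cat C X \twoheadrightarrow \End_\cat D FX$, use Corollary~\ref{endringind} plus ``quotients of local rings are local'' for the first claim, and for the second claim lift the isomorphism via fullness and exploit that in the finite-dimensional local algebra $\End_\cat C X$ the non-units form the nilpotent radical $J$, so $gf \notin J$ is a unit. The only cosmetic differences are that you treat the $FX=0$ edge case explicitly (the paper glosses over it), and you finish by showing symmetrically that $fg$ is also a unit, whereas the paper instead takes the left inverse $\beta$ of $gf$ and observes that $f \circ \beta g$ is a nonzero idempotent in the local ring $\End_\cat C Y$, hence the identity.
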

\begin{proof}
The first part follows from corollary \ref{endringind} since homomorphic images of local rings are local.
To prove the second part, suppose that $X,Y$ are indecomposable objects of $\cat C$ and that $FX \cong FY$ are non-zero in $\cat D$.
As $F$ is full, there exist morphisms $\varphi : X\to Y$, $\psi: Y \to X$ such that $F (\psi \circ \varphi) = \id_{FX}$.
Thus $\alpha = \psi \circ \varphi$ is not nilpotent.
Hence $\alpha \not \in J$, where $J$ is the radical of the finite-dimensional algebra $\End X$.
As $\End X$ is local, $J$ is the unique maximal left ideal, so it follows that $(\End X) \alpha = \End X$.
Let $\beta \in \End X$ be such that $\beta \circ \alpha = \beta \circ \psi \circ \varphi = \id_X$.
Then $\varphi \circ \beta \circ \psi$ is a non-zero idempotent in the local algebra $\End Y$, hence is equal to $\id_Y$.
Thus $X \cong Y$.
\end{proof}

\subsection{The additive Grothendieck ring $R_{\cat C}$}\label{addGroth}
Let $\cat C$ denote a $\K$-linear Krull-Schmidt category, let $\Z [\cat C]$ denote the free $\Z$-module generated by the isomorphism classes of the objects of $\cat C$, and let $(\cdot, \cdot)_{\cat C}$ denote the bilinear form on $\Z [ \cat C]$ defined by bilinear extension of the rule
$$ ( [U], [V] )_{\cat C} =  \dim_\K \Hom_{\cat C} (U, V).$$

Write $R_{\cat C}$ for the quotient of $\Z [ \cat C ]$ by the relations
$$ [ A \oplus B ] - [A] - [B] $$
for all objects $A$, $B$ in $\cat C$. 
Thus $R_{\cat C}$ is the free $\Z$-module generated by the isoclasses of the indecomposable objects of $\cat C$.
Since
$$ \Hom (A \oplus A', B) \cong \Hom (A, B) \oplus \Hom (A', B),$$
and similarly in the second argument, the defining relations of $R_{\cat C}$ are contained in the left and right radicals of the bilinear form.
By abuse of notation, we use the same notation for the bilinear form induced in this way on $R_{\cat C}$.

We call $R_{\cat C}$ the \newterm{additive Grothendieck group} of $\cat C$.
Note that in the case where $\cat C$ is semisimple, $R_{\cat C}$ is the ordinary Grothendieck group and the bilinear form is non-degenerate with an orthonormal basis given by the isomorphism classes of the simple objects.

Now suppose that $\cat C$ is a $\K$-linear Krull Schmidt tensor category.
For any object $A$ of $\cat C$, the functors $- \otimes A$ and $A \otimes -$ are preadditive, and so setting $[A][B] = [A\otimes B]$ for all objects $A, B$ of $\cat C$ defines a bilinear multiplication on $R_\cat C$.
This multiplication is commutative, since $\cat C$ carries a symmetric braiding, and has unit $[\unit]$.
Thus $R_{\cat C}$ is a commutative ring, called the \newterm{additive Grothendieck ring} of $\cat C$.
The duality on $\cat C$ defines an involutive ring automorphism $*$ of $R_{\cat C}$ via $[A]^* = [A^*]$ for all objects $A$ of $\cat C$.
As duality defines a contravariant endofunctor, one has
$$ ([A], [B])_{\cat C} = ([B]^*, [A]^*)_{\cat C} $$
for all objects $A, B$.
Moreover, the Hom-set adjunction
$$ \Hom (A \otimes B, C) \cong \Hom (A, C \otimes B^{*})$$
(an immediate consequence of equations \eqref{triangleequalities}) gives the invariance relation
$$ ([A][B], [C]) = ([A], [C][B]^*)$$
for all objects $A,B,C$.

\section{The category \underline{Re}p$(GL_\delta)$}\label{delignescat}

In this section we define Deligne's tensor category $\uRep(GL_\delta)$ and prove that it satisfies a certain universal property  (see Proposition \ref{uni}).  To define $\uRep(GL_\delta)$ we first diagrammatically define a smaller ``skeleton category.'' We then take the additive envelope (\S\ref{additiveenvelope}) followed by the Karoubi envelope (\S\ref{karoubienvelope}) of this skeleton category to get $\uRep(GL_\delta)$.  To start, we introduce the diagrams we will use to define the skeleton category.

\subsection{Words and diagrams}\label{worddiagrams}  Suppose $w$ and $w'$ are finite (possibly empty) words in two letters denoted  $\bullet$ (black letter) and $\circ$ (white letter).  A \emph{$(w, w')$-diagram} is a graph which satisfies the following conditions:
\begin{enumerate}

\item[(i)] The vertices are positioned in two (possibly empty) horizontal rows.

\item[(ii)] Each vertex is drawn as either $\black$ or $\white$  so that the bottom (resp. top) row of vertices is the word $w$ (resp. $w'$).

\item[(iii)] Each vertex is adjacent to exactly one edge.

\item[(iv)] An edge is adjacent to both a black and a white vertex if and only if the vertices adjacent to that edge are either both in the top row or both in the bottom row.
\end{enumerate} 

An edge in a $(w, w')$-diagram is called a \emph{propagating edge} if it is adjacent to a vertex in the top row and a vertex in the bottom row.

\begin{example} (1) Let ${\bf 1}$ denote the empty word.  The empty graph is the unique $({\bf 1}, {\bf 1})$-diagram.  On the other hand, there are two $(\black\!\black\!\white\white, {\bf 1})$-diagrams: 

$$\includegraphics{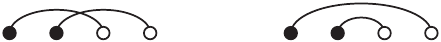}$$

(2) There are six $(\black\!\white\!\white\black, \black\white)$-diagrams:
$$\includegraphics{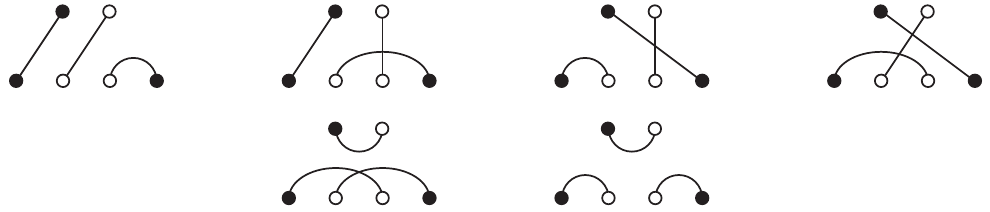}$$ Each of the top four $(\black\!\white\!\white\black, \black\white)$-diagrams have two propagating edges, whereas the bottom two have no propagating edges. 
\end{example}

\begin{remark}\label{existdiag} Suppose $w$ (resp. $w'$) is a word with $r$ (resp. $r'$) black letters and $s$ (resp. $s'$) whites letters.  It is easy to show that a $(w, w')$-diagram exists if and only if $r+s'=r'+s$, in which case the number of $(w, w')$-diagrams is $(r+s')!$. 
\end{remark}

Suppose $w, w'$, and $w''$ are finite words in the letters $\black$ and $\white$.  Given a $(w, w')$-diagram $X$ and a $(w', w'')$-diagram $Y$, we let $Y\star X$ denote the graph obtained by stacking $Y$ atop $X$ so that the top row of vertices of $X$ are identified with the bottom row of vertices of $Y$.  Next, we let $Y\cdot X$ denote the $(w, w'')$-diagram obtained by restricting $Y\star X$ to its top and bottom rows of vertices.  Finally, let $\ell(X, Y)$ denote the number of cycles in $Y\star X$ (i.e. the number of connected components of $Y\star X$ minus the number of connected components of $Y\cdot X$).   For example, if 
$$\includegraphics{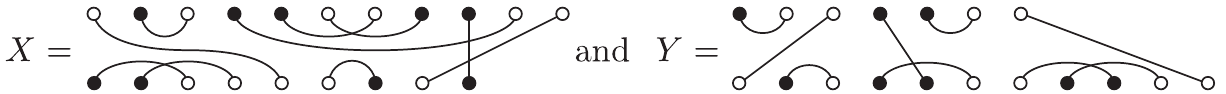}$$

$$\includegraphics{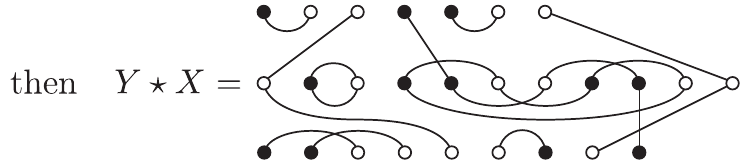}.$$

$$\includegraphics{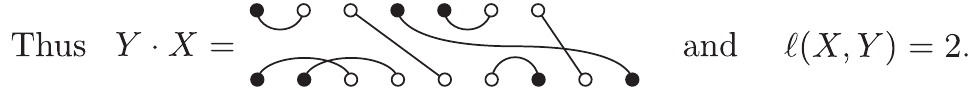}$$

\subsection{The skeleton category}\label{skeleton}  Fix $\delta\in\K$.  Using the setup from \ref{worddiagrams} we can now define the skeleton category $\uRep_0(GL_\delta)$.

\begin{definition} The category $\uRep_0(GL_\delta)$ has

Objects:  finite words in the letters $\black$ and $\white$.

Morphisms:  $\Hom(w, w')$ is the $\K$-vector space on basis $\{(w,w')\text{-diagrams}\}$.

Composition:  $\Hom(w', w'')\times\Hom(w, w')\to\Hom(w, w'')$ sending $(f,g)\mapsto fg$ is the $\K$-bilinear map satisfying $YX=\delta^{\ell(X, Y)}Y\cdot X$ whenever $X$ is a $(w, w')$-diagram and $Y$ is a $(w', w'')$-diagram.

\end{definition}

To show $\uRep_0(GL_\delta)$ is indeed a category, it is easy to check that composition in $\uRep_0(GL_\delta)$ is associative.  Also, if $w$ is a finite word in $\black$ and $\white$, then the $(w, w)$-diagram with each vertex in the bottom row adjacent to the vertex directly above it is the identity morphism in $\End(w)$.  For example, 
$$\includegraphics{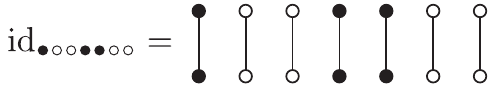}~.$$

\subsection{Tensor category structure of $\uRep_0(GL_\delta)$}\label{tens0}  We will now equip $\uRep_0(GL_\delta)$ with the structure of a tensor category in the sense of \S\ref{tensorcat}.  

\begin{definition}\label{tens} The bifunctor $- \otimes - :\uRep_0(GL_\delta)\times \uRep_0(GL_\delta)\to\uRep_0(GL_\delta)$ is defined as follows:

On objects:  Set $w_1\otimes w_2=w_1w_2$ (concatenation of words) for any objects $w_1$ and $w_2$ in $\uRep_0(GL_\delta)$.

On morphisms:  Assume $w_i$ and $w_i'$ are finite words in $\black$ and $\white$, and $X_i$ is a $(w_i, w_i')$-diagram for $i=1,2$.  Let $X_1\otimes X_2$ denote the $(w_1w_2, w_1'w_2')$-diagram obtained by placing $X_1$ directly to the left of $X_2$.  Now extend $\K$-linearly in both arguments to define tensor products of arbitrary morphisms in $\uRep_0(GL_\delta)$.
\end{definition}

Let $c_{w_1, w_2}:w_1\otimes w_2\to w_2\otimes w_1$ be the $(w_1w_2, w_2w_1)$-diagram such that the vertex in the bottom row corresponding to the $i$th letter in $w_1$ (resp. $w_2$) is adjacent to the vertex in the top row corresponding to the $i$th letter in $w_1$ (resp. $w_2$).  For example, 
$$\includegraphics{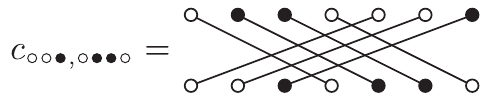}~.$$
It is easy to see that Definition \ref{tens} gives $\uRep_0(GL_\delta)$ the structure of a monoidal category with unit object ${\bf 1}$ (the empty word) and symmetric braiding $c$.  

Next, we will show that $\uRep_0(GL_\delta)$ is rigid.  To do so, given a finite word $w$ in $\black$ and $\white$ let $w^\ast$ denote the word obtained from $w$ by replacing all black letters with white letters and vice versa.  Now define the morphism $\ev_w:w^\ast\otimes w\to{\bf 1}$ (resp. $\coev_w:{\bf 1}\to w\otimes w^\ast$) to be the $(w^\ast w, {\bf 1})$-diagram (resp. $({\bf 1}, ww^\ast)$-diagram) such that the $i$th letter in $w^\ast$ is adjacent to the $i$th letter in $w$ for all $i$.  For example,
$$\includegraphics{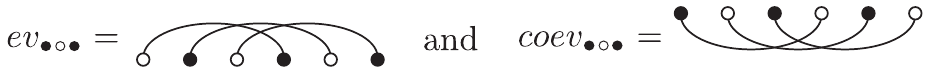}.$$  It is easy to check that $\ev_w$ and $\coev_w$ make $w^\ast$ a dual to $w$.  
Since $\End({\bf 1})=\K$ and $- \otimes -$ is bilinear, it follows that $\uRep_0 (GL_\delta)$ is a tensor category.

\subsection{Universal property of $\uRep_0(GL_\delta)$}\label{uniprop0}
Following \S\ref{dimension} we can compute the categorical dimension of any object in $\uRep_0(GL_\delta)$.  In particular, 
$$\dim(\black)=\ev_\black c_{\black,\white} \coev_\black=\delta$$ 
where the last equality follows from the fact that $$\includegraphics{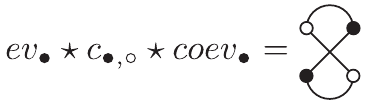}.$$  
$\uRep_0(GL_\delta)$ is characterized as the universal tensor category generated by an object of dimension $\delta$ and its dual \cite{Del07}.  More precisely, it possesses the following universal property:

\begin{proposition} \label{uni0}  Given a tensor category $\cat{T}$, let $\cat{T}_\delta$ denote the category of $\delta$-dimensional objects in $\cat{T}$ and their isomorphisms.  Then the following functor induces an equivalence of categories:$$\begin{array}{rcl}
\Theta : \strtensorfuncat(\uRep_0(GL_\delta), \cat{T}) & \to & \cat{T}_\delta\\
F & \mapsto & F(\black)\\
(\eta:F \nattrans F') & \mapsto & \eta_\black
\end{array}$$
\end{proposition}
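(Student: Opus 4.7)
The plan is to verify that $\Theta$ is essentially surjective and fully faithful, treating the two separately. Throughout, let $\cat T$ be a tensor category and $V$ a $\delta$-dimensional object.

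For essential surjectivity, given $V \in \cat T$ with a chosen dual $(V^*, \ev_V, \coev_V)$, I would construct a strict tensor functor $F : \uRep_0(GL_\delta) \to \cat T$ with $F(\black) = V$. On objects, set $F(\black) = V$, $F(\white) = V^*$, and extend by strict monoidality via $F(w_1 w_2) = F(w_1) \otimes F(w_2)$ and $F(\unit) = \unit_\cat T$. On morphisms, the key observation is that every $(w,w')$-diagram can be written as a composition of tensor products of the elementary morphisms $\id_\black$, $\id_\white$, $\braid_{x,y}$ (for $x,y \in \{\black,\white\}$), $\ev_\black$, $\ev_\white$, $\coev_\black$, $\coev_\white$; send these to the corresponding morphisms constructed in $\cat T$ out of $\id_V$, $\id_{V^*}$, the braiding of $\cat T$, and $\ev_V, \coev_V$ (the last two for $\white$ being determined from those of $\black$ by the rigidity structure). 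Extending $\K$-linearly yields a candidate $F$ on Hom-spaces; it respects tensor products by design.

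The main obstacle is showing $F$ is well-defined on morphisms and respects the composition rule $YX = \delta^{\ell(X,Y)} Y \cdot X$. Well-definedness amounts to showing that two elementary decompositions of the same diagram yield the same morphism in $\cat T$; this follows from coherence (Maclane) together with the triangle equalities \eqref{triangleequalities}, the naturality of $\braid$, and the two braiding relations in $\cat T$. For the scalar factor, I would argue locally: when stacking $Y$ atop $X$ produces a connected component of $Y \star X$ that is a closed loop (contributing $1$ to $\ell(X,Y)$), the corresponding fragment of the composition in $\cat T$ factors through $\tr(\id_V) = \dim V = \delta$ by \S\ref{dimension}. After all loops are contracted in this fashion, the remaining composition is precisely $F(Y \cdot X)$, giving $F(Y)F(X) = \delta^{\ell(X,Y)} F(Y \cdot X)$ as required. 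This establishes $\Theta(F) = F(\black) = V$.

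For fullness, given an isomorphism $\phi : F(\black) \to F'(\black)$ between images of strict tensor functors, I would build a monoidal natural isomorphism $\eta : F \nattrans F'$ by setting $\eta_\black = \phi$, $\eta_\white = ((\phi)^{-1})^*$ (forced by Proposition \ref{monoidalnattranspropn}), and $\eta_{w_1 w_2} = \eta_{w_1} \otimes \eta_{w_2}$, which is well-defined since both $F,F'$ are strict. Monoidality is then immediate; naturality needs only be checked on the generating morphisms listed above, since an arbitrary diagram is built from these. For generators involving only $\id$ and $\braid$, naturality is automatic from Proposition \ref{monoidalnattranspropn}. For $\ev$ and $\coev$, it is exactly the content of the equations \eqref{inversioneqns} appearing in the proof of Proposition \ref{monoidalnattranspropn}. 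For faithfulness, if $\eta, \eta' : F \nattrans F'$ are monoidal natural transformations with $\eta_\black = \eta'_\black$, then Proposition \ref{monoidalnattranspropn} forces $\eta_\white = \eta'_\white$, and the monoidal constraint $\eta_{w_1 w_2} = \eta_{w_1} \otimes \eta_{w_2}$ (with $\eta_\unit = \id_\unit$) propagates the equality to every word $w$. Hence $\Theta$ is bijective on isomorphisms of functors, completing the proof.
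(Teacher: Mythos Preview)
Your proposal is correct and follows essentially the same approach as the paper: construct $F$ on generators and invoke coherence for well-definedness, then build $\eta$ monoidally from $\eta_\black$ and $\eta_\white = (\eta_\black^{-1})^*$ and check naturality on generators, with faithfulness forced by Proposition~\ref{monoidalnattranspropn}. The only minor difference is that the paper packages the well-definedness of $F$ (including the $\delta^{\ell(X,Y)}$ factor) entirely into a citation of the coherence theorem for tensor categories, whereas you unpack the loop-contraction argument explicitly; also, your appeal to Proposition~\ref{monoidalnattranspropn} for naturality with respect to braidings is slightly imprecise---that step really uses the naturality of $\braid$ in $\cat T$ together with strictness of $F,F'$---but the argument is sound.
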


\begin{proof}
Since tensor functors preserve categorical dimension, $\Theta (F) = F(\black)$ is an object of the category $\cat{T}_\delta$, and by Proposition \ref{monoidalnattranspropn}, $\Theta (\eta) = \eta_\black$ is an isomorphism of $F(\black)$, hence a morphism of $\cat{T}_\delta$.  Let $X$ be an object of $\cat{T}_\delta$ and let $(X^*, \ev_X, \coev_X)$ be a dual for $X$ in $\cat T$.  Let $w, w'$ be finite words in $\black$ and $\white$.  By the coherence theorem for tensor categories (see for instance \cite{Selinger} and references therein), the image of any $(w,w')$-diagram under a strict tensor functor is completely determined by the image of \begin{equation}\label{pertinentmorphisms}\id_{\black}, \id_{\white}, \ev_\black, \coev_\black.\end{equation}  Since 
$$ \ev_X \circ \braid_{X, X^*} \circ \coev_X = \dim X = \delta,$$ there exists a unique, well-defined, strict tensor functor $F_X : \uRep_0(GL_\delta) \to \cat T$ with  
\begin{eqnarray*}
F_X : &\black \mapsto X, \quad & \white \mapsto X^* \\
&\ev_\black \mapsto \ev_X, \quad & \coev_\black \mapsto \coev_X.
\end{eqnarray*}
Thus $\Theta$ is essentially surjective.

We now show that $\Theta$ is full.
Let $X,Y$ be objects of $\cat{T}_\delta$ and write $F_X, F_Y$ for the functors defined $X,Y$ as above.
Suppose now that $\varphi : X \to Y$ is a morphism in $\cat{T}_\delta$; so $\varphi$ is invertible.
Define a family of isomorphisms 
$$\epsilon = (\epsilon_w : F_X (w) \nattrans F_Y (w))_w$$ 
indexed by finite words $w$, by
$$ \epsilon_\black = \varphi, \qquad \epsilon_\white = (\varphi^{-1})^* $$
and $\epsilon_{w \otimes w'} = \epsilon_w \otimes \epsilon_{w'}$ for all finite words $w, w'$.
It remains to show that $\epsilon$ is a natural transformation, that is, for all morphisms $\sigma : w \to w'$ in $\uRep_0(GL_\delta)$, that $F_Y (\sigma) \circ \epsilon_w = \epsilon_{w'} \circ F_X (\sigma).$
For finite words $w, w'$, we have
\begin{eqnarray*}
F_Y (\braid_{w, w'}) \circ \epsilon_{w w'} &=& \braid_{F_Y w, F_Y w'} \circ \epsilon_{w w'} \\
&=& \epsilon_{w' w} \circ \braid_{F_X w , F_X w'} \\
&=& \epsilon_{w' w} \circ F_X (\braid_{w, w'}).
\end{eqnarray*}
Thus, in verifying naturality, the words may be reordered.
As $\epsilon$ is monoidal by construction, if suffices to check naturality in the cases where $\sigma$ is one of the four morphisms \eqref{pertinentmorphisms}.
The cases of the identity morphisms are trivial.
In the cases of the latter two morphisms, the naturality relations are precisely
$$
\ev_X = \ev_Y \circ (\varphi^{-1})^* \otimes \varphi, \qquad \varphi \otimes (\varphi^{-1})^* \circ \coev_X = \coev_Y,
$$
which follow immediately from \eqref{dualisearoundbends}.  
Thus $\Theta$ is full.

Finally, suppose that $\epsilon$ is a morphism of $\strtensorfuncat(\uRep_0(GL_\delta), \cat{T})$. 
Since $\epsilon$ is monoidal, it is determined by $\epsilon_\black$ and $\epsilon_\white$.
By Proposition \ref{monoidalnattranspropn}, $\epsilon_\white = (\epsilon_\black^{-1})^*$, and so $\epsilon$ is determined by $\epsilon_\black$ alone.
Thus $\Theta$ is faithful.
\end{proof}

\subsection{Definition of $\uRep(GL_\delta)$}\label{uGLdefn}  
Let  $\uRep(GL_\delta) = \kar {(\add{\uRep_0(GL_\delta)})}$ denote the Karoubi envelope of the additive envelope of  $\uRep_0(GL_\delta)$, 
as per \S\ref{additiveenvelope} and \S\ref{karoubienvelope}. 
Thus $\uRep_0(GL_\delta)$ may be identified with a full subcategory of $\uRep(GL_\delta)$, and the tensor category structure of $\uRep(GL_\delta)$ extends that of $\uRep_0(GL_\delta)$.
For every idempotent $e$ of $\uRep_0(GL_\delta)$, fix a splitting $(\im e, \inj_e, \proj_e)$ of $e$ in $\uRep(GL_d)$.
As a notational convenience, whenever $e \in \End X$ and $f \in \End Y$ are idempotents of objects $X,Y$ of $\uRep_0(GL_d)$, identify 
$$\Hom_{\uRep(GL_d)} (\im e, \im f) = f  \Hom_{\uRep_0(GL_d)} (X,Y)  e \subset \Hom_{\uRep_0(GL_d)} (X,Y)$$
via the morphisms $\inj_e, \inj_f, \proj_e, \proj_f$.
For any tensor category $\cat{T}$, let $\cat{H}om'(\uRep (GL_\delta), \cat{T})$ denote the full subcategory of $\tensorfuncat(\uRep (GL_\delta), \cat{T})$ whose objects are those functors whose restriction $\uRep_0 (GL_\delta) \to \cat{T}$ yields a strict tensor functor.
The category $\uRep(GL_\delta)$ has the following universal property  (see  \cite[Proposition 10.3]{Del07}).

\begin{proposition} \label{uni}  
Suppose that $\cat{T}$ is a tensor category and let $\cat{T}_\delta$ be as in Proposition \ref{uni0}.
Then the following functor induces an equivalence of categories:$$\begin{array}{rcl}
\cat{H}om'(\uRep(GL_\delta), \cat{T}) & \to & \cat{T}_\delta\\
F & \mapsto & F(\black)\\
(\eta:F \nattrans F') & \mapsto & \eta_\black
\end{array}$$
\end{proposition}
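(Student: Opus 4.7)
The plan is to factor the functor of the statement through the equivalence of Proposition \ref{uni0}. Since $\uRep(GL_\delta) = \kar{(\add{\uRep_0(GL_\delta)})}$, the inclusion $\iota_0 : \uRep_0(GL_\delta) \hookrightarrow \uRep(GL_\delta)$ induces a restriction functor
$$R : \cat{H}om'(\uRep(GL_\delta), \cat{T}) \to \strtensorfuncat(\uRep_0(GL_\delta), \cat{T}), \quad F \mapsto F \circ \iota_0, \quad \epsilon \mapsto \epsilon_{\iota_0},$$
where the target of $R$ really is $\strtensorfuncat$ (and not merely $\tensorfuncat$) by the defining condition on $\cat{H}om'$. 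The functor of the statement factors as the composition of $R$ with the equivalence $\Theta$ of Proposition \ref{uni0}, so it suffices to prove that $R$ is an equivalence. This will parallel, but strengthen, the universal properties \eqref{additiveenvelopeup2} and \eqref{karoubienvelopeup2} of the additive and Karoubi envelopes; the strictness condition in $\cat{H}om'$ is what removes the need to assume $\cat{T}$ additive or Karoubi a priori.

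For essential surjectivity, given a strict tensor functor $F_0 : \uRep_0(GL_\delta) \to \cat{T}$, I would extend first along $\uRep_0(GL_\delta) \hookrightarrow \add{\uRep_0(GL_\delta)}$ by sending a tuple $(w_1, \ldots, w_n)$ to the biproduct $F_0 w_1 \oplus \cdots \oplus F_0 w_n$ and matrix morphisms to the induced morphisms between biproducts, then extend further to $\uRep(GL_\delta)$ by sending an object $(X, e)$ of $\kar{(\add{\uRep_0(GL_\delta)})}$ to the image of the endomorphism induced by $e$, using the chosen splittings fixed in \S\ref{uGLdefn}. The tensor structure $\eta$ on the resulting $F$ is assembled from the canonical isomorphisms intertwining tensor products of biproducts with biproducts of tensor products, and tensor products of images of idempotents with images of tensor products of idempotents; by construction $F \circ \iota_0 = F_0$ strictly, so $F \in \cat{H}om'(\uRep(GL_\delta), \cat{T})$.

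For fullness and faithfulness of $R$, the central observation is Proposition \ref{karoubipropn}: every object of $\uRep(GL_\delta)$ is isomorphic to the image of an idempotent on a tuple of objects of $\uRep_0(GL_\delta)$. The component at such an object of a monoidal natural transformation $\epsilon : F \nattrans F'$ is therefore forced, via naturality applied to the biproduct inclusions and projections together with the idempotent splitting maps, by the components at objects of $\uRep_0(GL_\delta)$; and any monoidal natural transformation $\eta : F_0 \nattrans F_0'$ of the restrictions extends uniquely in this fashion. The main obstacle I anticipate is the essential surjectivity step, specifically the verification that the assembled coherence isomorphism $\eta$ satisfies the hexagon and unit axioms of \S\ref{tensorcat} and that $F$ indeed defines a tensor functor compatibly on tensor products of idempotent images; this amounts to careful bookkeeping once the explicit constructions of $\add$ and $\kar$ from \S\ref{additiveenvelope}--\S\ref{karoubienvelope} are invoked, exploiting the strictness of $F_0$ on $\uRep_0(GL_\delta)$.
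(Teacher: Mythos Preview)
Your approach is essentially the same as the paper's: factor through restriction to $\uRep_0(GL_\delta)$ and invoke Proposition \ref{uni0}. The paper simply cites the universal properties \eqref{additiveenvelopeup2} and \eqref{karoubienvelopeup2} to obtain $\tensorfuncat(\uRep(GL_\delta), \cat{T}) \simeq \tensorfuncat(\uRep_0(GL_\delta), \cat{T})$ via restriction, then notes that $\strtensorfuncat(\uRep_0(GL_\delta), \cat{T})$ sits inside the latter as a full subcategory; you instead unpack by hand the construction of the quasi-inverse to $R$.

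One correction: you assert that ``the strictness condition in $\cat{H}om'$ is what removes the need to assume $\cat T$ additive or Karoubi a priori,'' but your own essential-surjectivity step forms biproducts $F_0 w_1 \oplus \cdots \oplus F_0 w_n$ in $\cat T$ and takes the image of the idempotent induced by $e$ in $\cat T$, so you are using both hypotheses. The splittings fixed in \S\ref{uGLdefn} live in $\uRep(GL_\delta)$, not in $\cat T$, and do not help you define $F(X,e)$. The paper's proof has the same tacit requirement, through the hypotheses on $\cat D$ in \eqref{additiveenvelopeup2} and \eqref{karoubienvelopeup2}; neither argument goes through for a tensor category $\cat T$ that fails to be additive and Karoubi, but this is harmless since every target used later in the paper ($\Rep(GL_d)$, $\Rep(\glmn)$) satisfies both.
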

\begin{proof}
The universal properties of the additive envelope \eqref{additiveenvelopeup2} and Karoubi envelope \eqref{karoubienvelopeup2}, yield that
$$ \tensorfuncat (\uRep(GL_\delta), \cat{T}) \cong \tensorfuncat (\uRep_0(GL_\delta), \cat{T}) $$
via the restriction functors there described.
As $\strtensorfuncat (\uRep_0(GL_\delta), \cat{T})$ is a full subcategory of the latter, the result follows from Proposition \ref{uni0}.
\end{proof}

\section{Indecomposable objects in \underline{Re}p$(GL_\delta)$}\label{indecomposables}

The main goal of this section is to classify isomorphism classes of indecomposable objects in $\uRep(GL_\delta)$.  By Proposition \ref{indidem}(1), these indecomposable objects correspond to primitive idempotents in endomorphism algebras of $\uRep_0(GL_\delta)$.  In light of this, we first describe the classification of conjugacy classes of such primitive idempotents.

To start, let us fix some notation.  For nonnegative integers $r$ and $s$, let $w_{r,s}$ denote the word with $r$ black letters followed by $s$ white letters:
$$w_{r,s}=\underbrace{\black\cdots\black}_{r}\underbrace{\white\cdots\white}_{s}.$$  Using the symmetric braiding it is easy to see that every object in $\uRep_0(GL_\delta)$ is naturally isomorphic to $w_{r,s}$ for some $r, s\geq 0$.  Hence, we will only consider endomorphisms of the $w_{r,s}$'s.  We will write $\K B_{r,s}(\delta)$ (or just $B_{r,s}$) for the endomorphism algebra $\End(w_{r,s})$.  The algebras $B_{r,s}$ are the so-called \emph{walled Brauer algebras} (compare with \cite{MR1280591}, \cite{Koike}, \cite{MR1024455}).  It is well known that conjugacy classes of primitive idempotents in an algebra $A$ are in bijective correspondence with isomorphism classes of simple $A$-modules\footnote{It will be convenient for us to work with right modules.  However, the categories of right and left $B_{r,s}$-modules are equivalent via the anti-automorphism on $B_{r,s}$ given by reading diagrams up rather than down the page.}, which in turn are in bijective correspondence with isomorphisms classes of projective indecomposable $A$-modules, hereafter referred to as PIMs (see for example \cite{Benson}).  In this correspondence a primitive idempotent $e\in A$ corresponds to the PIM $eA$.  
The isomorphism classes of simples in the walled Brauer algebras are classified in \cite{CDDM}.  To explain their classification, we first need to recall some properties of (bi)partitions and their relation to symmetric groups.

\subsection{(Bi)partitions}\label{bipart}  A \emph{partition}  is a tuple of nonnegative integers $\alpha=(\alpha_1, \alpha_2,\ldots)$ whose \emph{parts} (i.e. $\alpha_i$'s) are such that $\alpha_i\geq\alpha_{i+1}$ for all $i>0$, and $\alpha_i=0$ for all but finitely many $i$.  We write $|\alpha|=\sum_{i>0}\alpha_i$ for the \emph{size} of $\alpha$ and we write $\alpha\vdash |\alpha|$.  We define the \emph{length} of $\alpha$, written $l(\alpha)$, to be the smallest positive integer with $\alpha_{l(\alpha)+1}=0$.  We will sometimes write $(\cdots 2^{a_2}1^{a_1})$ for the partition with $a_i$ parts equal to $i$.  It will be convenient for us to identify a partition $\alpha$ with its \emph{Young diagram} which consists of $l(\alpha)$ left-aligned rows of boxes, with $\alpha_i$ boxes in the $i$th row (reading from top to bottom).  For example, 
$$\includegraphics{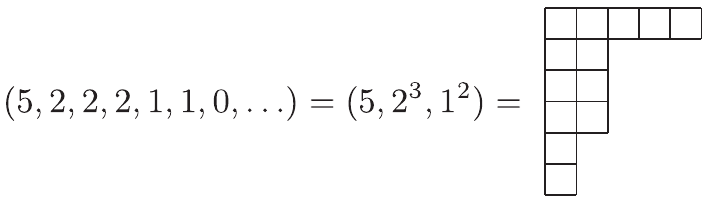}.$$Next, we let $\alpha^{\bf t}$ denote the \emph{transpose} of $\alpha$, i.e. $\alpha^{\bf t}_i$ is the number of boxes in the the $i$th column of $\alpha$.   For example, $(5,2^3,1^2)^{\bf t}=(6, 4, 1^3)$.  Finally, we let $\cat{P}$ denote the set of all partitions.  

Elements of $\cat{P}\times\cat{P}$ are called \emph{bipartitions}.  Given a bipartition $\lambda$, we let $\lambda^\black$ and $\lambda^\white$ denote the partitions such that $\lambda=(\lambda^\black, \lambda^\white)$.    We write $|\lambda|=(|\lambda^\black|, |\lambda^\white|)$ for the  \emph{size} of $\lambda$ and we write $\lambda\vdash|\lambda|$.  Moreover, we write $l(\lambda):=l(\lambda^\black)+l(\lambda^\white)$ for the \emph{length} of $\lambda$.  We define a partial order on sizes of bipartitions by declaring that $(a, b)\leq(c, d)$ whenever $a\leq c$ and $b\leq d$.  In particular, we write $|\mu|<|\lambda|$ to mean $|\mu|\leq|\lambda|$ and $\mu\not=\lambda$.  Additionally, we set $\lambda^\ast=(\lambda^\black, \lambda^\white)^\ast=(\lambda^\white, \lambda^\black)$.  We also have a bipartition-version of a Young diagram which we get as follows:  first place the Young diagram of $\lambda^\circ$ atop the Young diagram of $\lambda^\black$ so that the upper left corners are overlapping, then rotate the Young diagram of $\lambda^\circ$ 180 degrees about its upper left corner.  For example, the diagram associated to the bipartition $((4, 3, 1), (2^2, 1^2))$ is 
$$\includegraphics{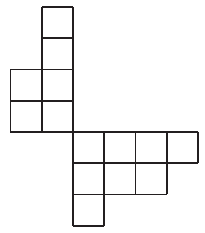}.$$

\subsection{Symmetric groups}
For a nonnegative integer $r$, let $\Sigma_r$ denote the symmetric group on $r$-elements, and let $\KS_r$ denote the corresponding group algebra\footnote{By convention,  $\Sigma_0$ denotes the trivial group of size $0!=1$, hence we identify $\KS_0=B_{0,0}=\K$.}.  It is well known that the primitive idempotents in $\KS_r$ (up to conjugation) are in bijective correspondence with partitions of size $r$ (see for example \cite{FH}).  Given $\alpha\vdash r$, let $z_\alpha\in\KS_r$ denote the corresponding primitive idempotent.  
For example, $z_{(r)}=\frac{1}{n!}\sum_{\sigma\in \Sigma_r}\sigma$ so that the partition $(r)$ corresponds to the trivial $\KS_r$-module.

We now connect the theory of symmetric groups with that of the walled Brauer algebras.  Regardless of $\delta$, the walled Brauer algebras $B_{r, 0}$ and $B_{0, r}$ are isomorphic to the group algebra $\KS_r$.  These isomorphisms are given by $$\begin{array}{rcccl}
B_{r,0} & \larup{\sim} & \KS_r & \arup{\sim} & B_{0,r}\\
\sigma^\black & \mapsfrom & \sigma & \mapsto & \sigma^\white\\
\end{array}$$
where, given $\sigma\in\Sigma_r$,  $\sigma^\black$ (resp. $\sigma^\circ$) is the $(w_{r,0},w_{r,0})$-diagram (resp. $(w_{0,r},w_{0,r})$-diagram) whose $i$th bottom vertex is adjacent to its $\sigma(i)$th top vertex (reading left to right) for $1\leq i\leq r$.  For example, if $\sigma\in\Sigma_5$ is the $3$-cycle $2\mapsto 3\mapsto 5\mapsto 2$, then 
$$\includegraphics{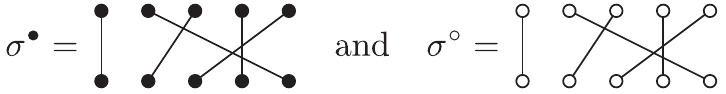}~.$$
More generally, given nonnegative integers  $r$ and $s$, we have an inclusion of algebras $\K[\Sigma_r\times\Sigma_s]\into B_{r,s}$ given by $(\sigma, \tau)\mapsto \sigma^\black\otimes\tau^\circ$ for all $\sigma\in\Sigma_r, \tau\in\Sigma_s$ (here $\K[\Sigma_r\times\Sigma_s]$ denotes the group algebra of the direct product $\Sigma_r\times\Sigma_s$).  Using this embedding we can consider $\K[\Sigma_r\times\Sigma_s]$ as a subalgebra of $B_{r,s}$, and we will do so for the rest of the paper.  

Now, let $J\subset B_{r,s}$ denote the $\K$-span of all $(w_{r,s}, w_{r,s})$-diagrams with less than $r+s$ propagating edges.  One can show that $J$ is a two-sided ideal in $B_{r,s}$ with $B_{r,s}/J\cong\K[\Sigma_r\times\Sigma_s]$ (see \cite[Proposition 2.3 and (2)]{CDDM}).  Hence, we have a surjection of algebras $\pi:B_{r,s}\onto\K[\Sigma_r\times\Sigma_s]$.  It is straightforward to show
\begin{equation}\label{piprop}\pi(a)=a\quad\text{for all }a\in\K[\Sigma_r\times\Sigma_s]\subset B_{r,s}.\end{equation} 

\subsection{Definition of the idempotent $e_\lambda$} Given a bipartition $\lambda\vdash(r,s)$, we set $z_\lambda:= z_{\lambda^\black}^\black\otimes z_{\lambda^\white}^\white\in\K[\Sigma_r\times\Sigma_s]\subset B_{r,s}$.    Note that $z_\lambda$ is an idempotent defined up to conjugation.  The assignment $\lambda\mapsto z_\lambda$ induces a bijection between bipartitions of size $(r, s)$ and primitive idempotents in $\K[\Sigma_r\times\Sigma_s]$ (up to conjugation).  It is important to notice that while $z_\lambda$ is a primitive idempotent in $\K[\Sigma_r\times\Sigma_s]$, it will generally not be primitive in the (usually) larger algebra $B_{r,s}$.  
Let $z_\lambda=e_1+\cdots+e_k$ be a decomposition of $z_\lambda$ into mutually orthogonal primitive idempotents in $B_{r,s}$.  Then $\pi(e_1),\ldots, \pi(e_k)$ are mutually orthogonal idempotents in $\K[\Sigma_r\times\Sigma_s]$ whose sum, by (\ref{piprop}), is $z_\lambda$.  As $z_\lambda$ is primitive in $\K[\Sigma_r\times\Sigma_s]$, there is a unique $i\in\{1,\ldots, k\}$ such that $\pi(e_i)\not=0$.  Set $e_\lambda=e_i$.  Again, note that $e_\lambda\in B_{r,s}$ is a primitive idempotent defined up to conjugation.  

\begin{example}\label{elamexs} (1) Let $\varnothing$ denote the empty partition $(0,0, \ldots)$.  Then $z_\varnothing$ is the identity element of $\KS_0=B_{0,0}$.  Hence, if $\lambda\vdash(r,0)$, then $z_\lambda=z_{\lambda^\black}^\black\otimes\id_{\bf 1}=z_{\lambda^\black}^\black$.  Moreover, since $B_{r,0}=\KS_r$, we also have $e_\lambda=z_{\lambda^\black}^\black$. Similarly, if $\lambda\vdash(0,s)$ then $e_\lambda=z_{\lambda^\white}^\white$.  As a special case, $e_{(\varnothing, \varnothing)}=\id_{\bf 1}$ (the empty graph).

(2) Consider the bipartition $\bibox$.   $z_{\Box}^\black=\id_\black$ and $z_{\Box}^\white=\id_\white$ which implies $z_\bibox=\id_{\black\white}$.  If $\delta\not=0$, then $\id_{\black\white}$ decomposes as $e_1+e_2$ in $B_{1,1}$ where $e_1$ and $e_2$ are the following orthogonal primitive idempotents:  
$$\includegraphics{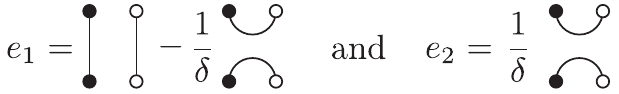}~.$$
In this case $\pi(e_1)=\id_{\black\white}$ and $\pi(e_2)=0$, hence $e_\bibox=e_1$.  If $\delta=0$, then $\id_{\black\white}$ is primitive in $B_{1,1}$, and hence is equal to $e_\bibox$.
\end{example}

We close this subsection with the following useful proposition.  For a proof, we refer the reader to the proof of a completely analogous statement for $\uRep(S_t)$ found \cite[Proposition 3.8]{CO1}.

\begin{proposition}\label{abs} The idempotents $e_\lambda$ are absolutely primitive.  In other words, if $\K\subset\K'$ is a field extension then $e_\lambda\in\K B_{r,s}(\delta)$ is primitive when viewed as an idempotent in $\K'B_{r,s}(\delta)$.
\end{proposition}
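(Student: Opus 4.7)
The plan is to show that $e_\lambda B_{r,s}(\delta) e_\lambda$ is a local ring with residue field exactly $\K$. This characterization of absolute primitivity survives any base extension, because the Jacobson radical of a finite-dimensional algebra is nilpotent and nilpotency is preserved under tensoring with $\K'$; it therefore yields absolute primitivity at once.

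The central computation is to check that $\pi$ restricts to a surjection $e_\lambda B_{r,s}(\delta) e_\lambda \onto \K$. First, I would unwind the definition of $e_\lambda$ to obtain $\pi(e_\lambda) = z_\lambda$: applying $\pi$ to the decomposition $z_\lambda = e_1 + \cdots + e_k$ and using (\ref{piprop}) yields $z_\lambda = \pi(e_1) + \cdots + \pi(e_k)$ as a sum of mutually orthogonal idempotents of $\K[\Sigma_r\times\Sigma_s]$; since $\pi(e_\lambda)$ is by definition the unique nonzero summand, it must equal $z_\lambda$. Hence $\pi$ restricts to a surjection with target
\[
z_\lambda\,\K[\Sigma_r\times\Sigma_s]\,z_\lambda \;\cong\; z_{\lambda^\black} \KS_r z_{\lambda^\black} \otimes_\K z_{\lambda^\white} \KS_s z_{\lambda^\white}.
\]
The classical absolute irreducibility of the simple summands of $\KS_r$ and $\KS_s$ in characteristic zero forces each tensor factor to be one-dimensional, so the image collapses to $\K$.

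Now $e_\lambda$ is primitive in the finite-dimensional algebra $B_{r,s}(\delta)$, so $e_\lambda B_{r,s}(\delta) e_\lambda$ is local and its unique maximal ideal equals its Jacobson radical. The kernel $e_\lambda J e_\lambda$ of the surjection onto $\K$ (with $J$ the ideal of diagrams with fewer than $r+s$ propagating edges) is therefore this maximal ideal and is nilpotent. For any extension $\K \subset \K'$, the ideal $(e_\lambda J e_\lambda) \otimes_\K \K'$ remains nilpotent and is the kernel of the induced surjection $(e_\lambda B_{r,s}(\delta) e_\lambda) \otimes_\K \K' \onto \K'$. The base-changed corner ring then has a nilpotent ideal with field quotient and is consequently local, which means $e_\lambda$ remains primitive in $\K' B_{r,s}(\delta)$.

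The main obstacle I anticipate is the absolute irreducibility input used in the displayed formula: to conclude $z_\alpha \KS_r z_\alpha \cong \K$ for every partition $\alpha$ one relies on the classical fact that the simple $\KS_r$-modules in characteristic zero are absolutely irreducible (equivalently, are defined over $\mathbb{Q}$). Once this is granted, the rest of the argument is a matter of bookkeeping with the definition of $e_\lambda$ together with the standard observation that the nilpotent Jacobson radical of a finite-dimensional algebra persists through base extension.
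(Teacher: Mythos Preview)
Your argument is correct. The paper itself does not give a proof here but merely refers to the analogous \cite[Proposition~3.8]{CO1}; the approach you outline --- using the surjection $\pi$ and $\pi(e_\lambda)=z_\lambda$ to see that the corner ring $e_\lambda B_{r,s}(\delta)e_\lambda$ has residue field exactly $\K$ (via the split semisimplicity of $\K[\Sigma_r\times\Sigma_s]$), and then noting that a finite-dimensional local algebra with residue field $\K$ stays local under any field extension since its nilpotent radical remains nilpotent --- is precisely the standard argument and is what that reference does in the partition-algebra setting.
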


\subsection{Definition of the idempotent $e_\lambda^{(i)}$}  Next, we explain how to construct new idempotents from the the $e_\lambda$'s.  Consider the following morphisms: $$\begin{array}{ll}
\psi_{r,s} =(\id_\black)^{\otimes r}\otimes\coev_\black\otimes(\id_\white)^{\otimes s}, & \\[2pt]
\hat{\psi}_{r,s} =(\id_\black)^{\otimes r}\otimes\ev_\white\otimes(\id_\white)^{\otimes s}, & \\[2pt]
\phi_{r,s} =(\id_\black)^{\otimes r}\otimes((\ev_\white\otimes\id_\white)(\id_\black\otimes c_{\white,\white}))\otimes(\id_\white)^{\otimes s-1} & (s>0), \\[2pt]
\hat{\phi}_{r,s} =(\id_\black)^{\otimes r-1}\otimes((\id_\black\otimes\ev_\white)( c_{\black,\black}\otimes\id_\white))\otimes(\id_\white)^{\otimes s} & (r>0).\end{array}$$  For example, 
$$\includegraphics{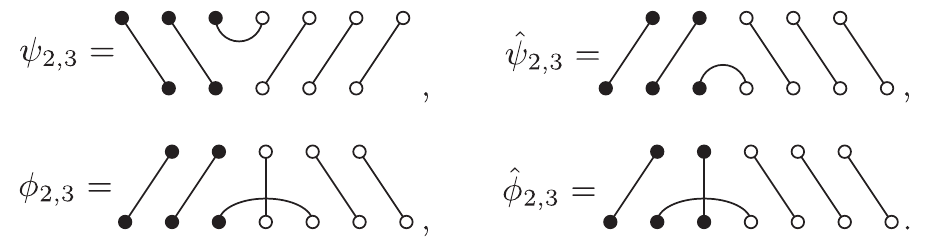}$$  
The following identities easily follow from the definitions above:  \begin{equation}\label{phipsi}
\phi_{r,s}\psi_{r,s}=\id_{w_{r,s}},\quad \hat{\phi}_{r,s}\psi_{r,s}=\id_{w_{r,s}},\quad \hat{\psi}_{r,s}\psi_{r,s}=\delta\id_{w_{r,s}}.
\end{equation}  Now, given a bipartition $\lambda\vdash(r,s)$ we set $e_\lambda^{(0)}=e_\lambda$ and define $e_\lambda^{(i)}\in B_{r+i, s+i}$ for $i>0$ recursively by
$$e_\lambda^{(i)}=\left\{\begin{array}{ll} 
\psi_{r+i-1,s+i-1}e_\lambda^{(i-1)}\phi_{r+i-1,s+i-1} & \text{if }s>0, \\[3pt]
\psi_{r+i-1,s+i-1}e_\lambda^{(i-1)}\hat{\phi}_{r+i-1,s+i-1} &  \text{if }s=0\text{ and } r>0, \\[3pt]
\frac{1}{\delta}\psi_{i-1,i-1}e_\lambda^{(i-1)}\hat{\psi}_{i-1,i-1} & \text{if }\lambda=(\varnothing,\varnothing)\text{ and }\delta\not=0.
\end{array}\right.$$  
Notice that $e_\lambda^{(i)}$ is undefined when $i>0$, $\lambda=\biemp$ and $\delta=0$.  However, $e^{(i)}_\lambda$ is defined (up to conjugation) in all other cases.

\begin{example}\label{elamiexs} (1) In Example \ref{elamexs}(1) we found that $e_\biemp$ is the empty graph.  Hence for $\delta\not=0$ we have $$\includegraphics{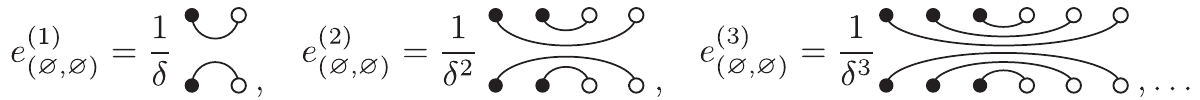}$$

(2) By Example \ref{elamexs}(1), for any $\delta\in\K$ we have $$\includegraphics{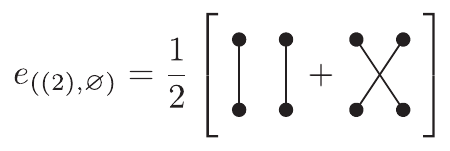}.$$   $$\includegraphics{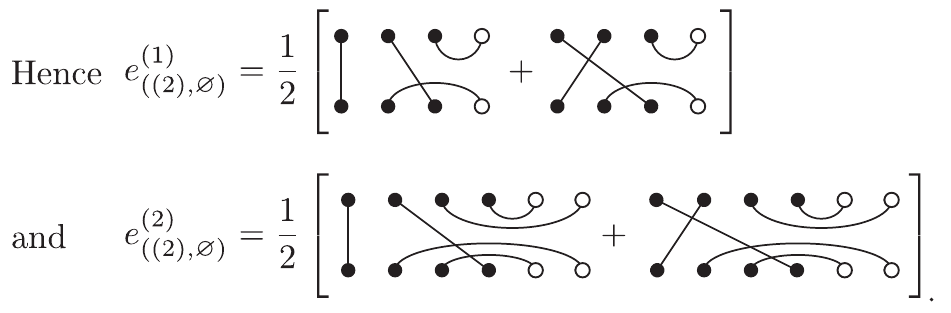}$$

(3) If $\delta=0$, then it follows from Example \ref{elamexs}(2) that $$\includegraphics{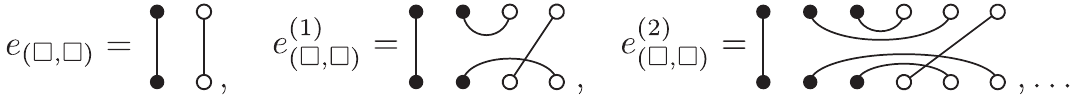}$$
On the other hand, if $\delta\not=0$ then by  Example \ref{elamexs}(2) $$\includegraphics{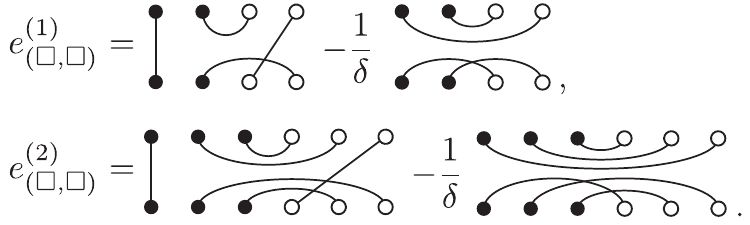}$$

\end{example}

It follows from (\ref{phipsi}) that $e_\lambda^{(i)}$ is an idempotent whenever it is defined.  Hence, the image of $e_\lambda^{(i)}$ is an object in $\uRep(GL_\delta)$. 

\begin{proposition}\label{isomlambdas} Given a bipartition $\lambda$, the objects $\im e_\lambda$ and $\im e_\lambda^{(i)}$ are isomorphic in $\uRep(GL_\delta)$ whenever $e_\lambda^{(i)}$ is defined.
\end{proposition}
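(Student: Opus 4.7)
The plan is to induct on $i$, establishing at each step that $\im e_\lambda^{(i-1)}$ and $\im e_\lambda^{(i)}$ are isomorphic in $\uRep(GL_\delta)$. Recall from \S\ref{uGLdefn} that, with splittings fixed, a morphism $\im e \to \im f$ is identified with an element of $f\, \Hom_{\uRep_0(GL_\delta)}(X,Y)\, e$, composition is the one inherited from $\uRep_0(GL_\delta)$, and an idempotent $e$ is the identity of $\im e$ under this identification. So it suffices to exhibit, for each $i$, a pair of morphisms $\alpha,\beta$ in the appropriate sandwiches whose compositions equal $e_\lambda^{(i-1)}$ and $e_\lambda^{(i)}$ respectively.

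Consider first the case $s > 0$, where $e_\lambda^{(i)} = \psi_{r+i-1,s+i-1}\, e_\lambda^{(i-1)}\, \phi_{r+i-1,s+i-1}$. The recursive formula itself suggests the candidates
$$\alpha = \psi_{r+i-1,s+i-1}\, e_\lambda^{(i-1)} : \im e_\lambda^{(i-1)} \to \im e_\lambda^{(i)}, \qquad \beta = e_\lambda^{(i-1)}\, \phi_{r+i-1,s+i-1} : \im e_\lambda^{(i)} \to \im e_\lambda^{(i-1)}.$$
The sandwich conditions $e_\lambda^{(i)} \alpha = \alpha = \alpha e_\lambda^{(i-1)}$ and $e_\lambda^{(i-1)} \beta = \beta = \beta e_\lambda^{(i)}$ reduce, via the idempotency of $e_\lambda^{(i-1)}$ and $e_\lambda^{(i)}$, to the single identity $\phi_{r,s}\psi_{r,s} = \id$ from \eqref{phipsi}. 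The same identity immediately gives $\beta\alpha = e_\lambda^{(i-1)}$ and $\alpha\beta = e_\lambda^{(i)}$, so $\alpha$ and $\beta$ are mutually inverse isomorphisms.

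The remaining two recursive cases follow the same recipe. For $s = 0$ and $r > 0$, replace $\phi_{r+i-1,s+i-1}$ by $\hat\phi_{r+i-1,s+i-1}$ throughout and invoke $\hat\phi_{r,s}\psi_{r,s} = \id$. For $\lambda = \biemp$ with $\delta \neq 0$, take
$$\alpha = \psi_{i-1,i-1}\, e_\biemp^{(i-1)}, \qquad \beta = \tfrac{1}{\delta}\, e_\biemp^{(i-1)}\, \hat\psi_{i-1,i-1};$$
here the relevant straightening relation is $\hat\psi_{r,s}\psi_{r,s} = \delta\cdot\id$, and the $\delta^{-1}$ normalization in $\beta$—mirroring the one built into the definition of $e_\biemp^{(i)}$—is exactly what forces $\alpha\beta = e_\biemp^{(i)}$ and $\beta\alpha = e_\biemp^{(i-1)}$.

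I do not anticipate any real obstacle: the argument is a direct unpacking of the recursive definition of $e_\lambda^{(i)}$ against the straightening identities in \eqref{phipsi}. The only delicate point is bookkeeping the $\tfrac{1}{\delta}$ factor in the $\biemp$ case so that both composites land on the intended idempotents; this is why the recursive definition groups the $\delta^{-1}$ on one side rather than distributing it symmetrically.
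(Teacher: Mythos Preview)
Your proposal is correct and matches the paper's proof essentially verbatim: both reduce to the inductive step $\im e_\lambda^{(i-1)}\cong\im e_\lambda^{(i)}$ and exhibit the same pairs of mutually inverse morphisms in each of the three recursive cases, appealing to the identities in \eqref{phipsi}. Your write-up simply makes explicit the sandwich-condition checks that the paper leaves implicit.
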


\begin{proof} It suffices to show $\im e_\lambda^{(i-1)}\cong \im e_\lambda^{(i)}$ whenever $i>0$.  Using (\ref{phipsi}) it is easy to check that the following table lists mutually inverse morphisms between $\im e_\lambda^{(i-1)}$ and $\im e_\lambda^{(i)}$ in all desired cases:$$\begin{array}{c|c|c}
\im e_\lambda^{(i-1)}\to\im e_\lambda^{(i)} & \im e_\lambda^{(i)}\to \im e_\lambda^{(i-1)} & \text{{\bf case:}}\\[2pt]\hline
\psi_{r+i-1, s+i-1}e_\lambda^{(i-1)}& e_\lambda^{(i-1)}\phi_{r+i-1,s+i-1}& s>0\\[2pt]
\psi_{r+i-1, s+i-1}e_\lambda^{(i-1)}& e_\lambda^{(i-1)}\hat{\phi}_{r+i-1,s+i-1}& s=0, r>0\\[2pt]
\psi_{i-1, i-1}e_\lambda^{(i-1)}& \frac{1}{\delta}e_\lambda^{(i-1)}\hat{\psi}_{i-1,i-1}& \lambda=\biemp, \delta\not=0
\end{array}$$
\end{proof}

\begin{corollary}\label{absi} $e_\lambda^{(i)}$ is absolutely primitive whenever it is defined.
\end{corollary}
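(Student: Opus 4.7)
The plan is to reduce the claim to the absolute primitivity of $e_\lambda$ itself, supplied by Proposition \ref{abs}, by exploiting the isomorphism $\im e_\lambda \cong \im e_\lambda^{(i)}$ established in Proposition \ref{isomlambdas}. The conceptual point is that absolute primitivity is equivalent, via Proposition \ref{indidem}(1), to indecomposability of the image in $\uRep(GL_\delta)$ after base change, and this indecomposability is transported across the isomorphism of Proposition \ref{isomlambdas}.

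First I would record that the morphisms $\psi_{r,s}, \phi_{r,s}, \hat\phi_{r,s}, \hat\psi_{r,s}$ are purely diagrammatic and defined over the prime field; the only scalar appearing in the recursive definition of $e_\lambda^{(i)}$ is $1/\delta$, which is used only in the case $\lambda=\biemp$ with $\delta\ne 0$ and in any event lies in $\K\subset\K'$. Hence for any field extension $\K\subset\K'$, the element $e_\lambda^{(i)}$, viewed as an idempotent in $\K' B_{r+i,s+i}(\delta)$, is obtained from $(e_\lambda)_{\K'}$ by running the same recursion over $\K'$.

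Next, Proposition \ref{abs} tells us that $(e_\lambda)_{\K'}$ is primitive in $\K' B_{r,s}(\delta)$; hence by Proposition \ref{indidem}(1), $\im (e_\lambda)_{\K'}$ is an indecomposable object of the $\K'$-form of $\uRep(GL_\delta)$. The proof of Proposition \ref{isomlambdas} is constructive and uses only identities valid over an arbitrary field of characteristic zero containing $\delta$, so applied over $\K'$ it yields an isomorphism $\im (e_\lambda)_{\K'} \cong \im (e_\lambda^{(i)})_{\K'}$. Therefore $\im (e_\lambda^{(i)})_{\K'}$ is indecomposable, and Proposition \ref{indidem}(1), applied now over $\K'$, forces $(e_\lambda^{(i)})_{\K'}$ to be primitive. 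This is exactly absolute primitivity of $e_\lambda^{(i)}$.

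I do not anticipate a real obstacle here: once it is noted that both the construction of $e_\lambda^{(i)}$ from $e_\lambda$ and the isomorphism of Proposition \ref{isomlambdas} are preserved under extension of scalars, the rest is a formal chase through Proposition \ref{indidem}(1). The only point needing mild care is the excluded case $\lambda=\biemp$, $\delta=0$, $i>0$, where $e_\lambda^{(i)}$ is undefined and the statement is vacuous.
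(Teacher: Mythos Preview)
Your argument is correct and is essentially the same as the paper's: use Proposition \ref{abs} to get absolute primitivity of $e_\lambda$, invoke Proposition \ref{isomlambdas} (which holds verbatim after base change) to transport indecomposability to $\im e_\lambda^{(i)}$, and then apply Proposition \ref{indidem}(1) over $\K'$. The paper compresses this into two lines, but the logical content is identical to yours.
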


\begin{proof}  By Proposition \ref{abs}, $e_\lambda$ is absolutely  primitive.  Thus, by Proposition \ref{indidem}(1), $\im e_\lambda\cong\im e_\lambda^{(i)}$ is absolutely indecomposable, which implies $e_\lambda^{(i)}$ is absolutely primitive.
\end{proof}

\subsection{Classification of primitive idempotents in walled Brauer algebras}  

We are now in position to state the classification of conjugacy classes of primitive idempotents in walled Brauer algebras.  The next theorem is merely a translation of the classification of simple modules for walled Brauer algebras \cite[Theorem 2.7]{CDDM} to the language of primitive idempotents (see for instance \cite{Benson}).

\begin{theorem}\label{idempclass} (1) If $r\not=s$ or $\delta\not=0$ then $\{e_\lambda^{(i)}~|~\lambda\vdash(r-i, s-i), 0\leq i\leq\min(r,s)\}$ is a complete set of pairwise non-conjugate primitive idempotents in $B_{r,s}$.

(2) If $\delta=0$ and $r>0$, then $\{e_\lambda^{(i)}~|~\lambda\vdash(r-i, r-i), 0\leq i<r\}$ is a complete set of pairwise non-conjugate primitive idempotents in $B_{r,r}$.

\end{theorem}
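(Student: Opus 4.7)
The plan is to translate the classification of simple $B_{r,s}$-modules from \cite[Theorem 2.7]{CDDM} into a classification of primitive idempotents via the standard dictionary for a finite-dimensional $\K$-algebra $A$: the assignment $e \mapsto \mathrm{top}(eA)$ induces a bijection between conjugacy classes of primitive idempotents and isomorphism classes of simple $A$-modules (see \cite{Benson}). Primitivity of each $e_\lambda^{(i)}$ has already been established in Corollary \ref{absi}, so the task reduces to verifying pairwise non-conjugacy; completeness will then follow by a cardinality comparison against the CDDM classification.

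To show non-conjugacy, the plan is to exploit the filtration $B_{r,s} = J^{(0)} \supset J^{(1)} \supset J^{(2)} \supset \cdots$, where $J^{(k)}$ is the two-sided ideal spanned by diagrams with at most $r+s-2k$ propagating edges (so $J^{(1)}$ is the ideal $J$ used in \S3 to construct the quotient map $\pi$). The key combinatorial claim, which I would prove by induction on $i$ using the identities \eqref{phipsi}, is that $e_\lambda^{(i)}$ lies in $J^{(i)}$ and has nonzero image in $J^{(i)}/J^{(i+1)}$; moreover, this image is identified, via the fixed cap-cup strands produced by the recursive wrapping, with $e_\lambda = z_{\lambda^\black}^\black \otimes z_{\lambda^\white}^\white \in \K[\Sigma_{r-i}\times\Sigma_{s-i}]$ up to a nonzero scalar. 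Granting this, idempotents at distinct levels $i$ cannot be conjugate, since conjugation preserves the filtration degree, and idempotents at the same level $i$ but with distinct $\lambda$ are also non-conjugate, because their reductions to the group algebra $\K[\Sigma_{r-i}\times\Sigma_{s-i}]$ are non-conjugate primitive idempotents there (a classical fact for symmetric group algebras).

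Completeness is then a cardinality match: the exhibited indexing set for part (1) is precisely $\Lambda_{r,s}$ defined in \S\ref{introthm1}, which by \cite[Theorem 2.7]{CDDM} labels the simple $B_{r,s}$-modules; in the exceptional case $\delta=0$ and $r=s$, the CDDM list omits exactly the label $\biemp$ at level $i=r$, in perfect agreement with the fact that $e_\biemp^{(r)}$ is undefined here (the recursive step involves division by $\delta$). The main technical obstacle in executing this plan is the inductive identification of the leading term of $e_\lambda^{(i)}$ modulo $J^{(i+1)}$: one must verify that the $\psi$-$\phi$ wrapping, applied to the (already quite complicated) primitive idempotent $e_\lambda^{(i-1)}$, descends on the associated graded to $e_\lambda$ itself rather than to some other primitive idempotent, which amounts to unpacking the diagrammatic content of the cellular structure of $B_{r,s}$ established in \cite{CDDM}.
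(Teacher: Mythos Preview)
Your core approach---translating \cite[Theorem~2.7]{CDDM} via the standard dictionary between simples and conjugacy classes of primitive idempotents, then matching cardinalities---is precisely what the paper does; indeed the paper offers nothing beyond that one-sentence citation. Your plan is therefore more detailed than the paper's own ``proof.''

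That said, the filtration argument you sketch for non-conjugacy is both more elaborate than necessary and has a genuine gap at the same-level step. The quotient $J^{(i)}/J^{(i+1)}$ is a $B_{r,s}$-bimodule, not an algebra, so ``their reductions to $\K[\Sigma_{r-i}\times\Sigma_{s-i}]$ are non-conjugate'' does not parse as stated: conjugation by a unit of $B_{r,s}$ acts on the layer via the bimodule structure and can mix the cap--cup patterns, so the ``fixed cap--cup strands'' decomposition you invoke is not obviously conjugation-stable. You flag this as your main technical obstacle, and it is real.

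There is a cleaner route that avoids the filtration and uses only facts already in hand. First note that $e_\lambda^{(k)}\in J$ whenever $k>0$, since the wrapping by $\psi$ and $\phi$ forces every diagram in $e_\lambda^{(k)}$ to have fewer than the maximal number of propagating edges; hence $\pi(e_\lambda^{(k)})=0$ for $k>0$, while $\pi(e_\lambda)=z_\lambda\neq 0$ by construction. Now suppose $e_\lambda^{(i)}$ and $e_\mu^{(j)}$ are conjugate in $B_{r,s}$ with $i\geq j$. By Proposition~\ref{indidem}(3) and Proposition~\ref{isomlambdas}, $\im e_\lambda^{(i-j)}\cong\im e_\lambda\cong\im e_\mu$ in $\uRep(GL_\delta)$, so $e_\lambda^{(i-j)}$ and $e_\mu$ are conjugate in $B_{r-j,s-j}$ (this $e_\lambda^{(i-j)}$ is defined in all cases under consideration). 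Applying the algebra surjection $\pi$ shows $\pi(e_\lambda^{(i-j)})$ is conjugate to $z_\mu\neq 0$, forcing $i=j$; then $z_\lambda$ and $z_\mu$ are conjugate in $\K[\Sigma_{r-i}\times\Sigma_{s-i}]$, whence $\lambda=\mu$. This replaces your technical obstacle with results already established before the theorem.
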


\begin{corollary}\label{absolute}  Primitive idempotents in walled Brauer algebras are absolutely primitive.
\end{corollary}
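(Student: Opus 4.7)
The plan is to combine the classification in Theorem \ref{idempclass} with the absolute primitivity of the distinguished idempotents $e_\lambda^{(i)}$ established in Corollary \ref{absi}, using only that absolute primitivity is preserved under conjugation.

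First, I would let $e \in B_{r,s}(\delta)$ be an arbitrary primitive idempotent and invoke Theorem \ref{idempclass} to find a bipartition $\lambda$ and an integer $i$ such that $e$ is conjugate to $e_\lambda^{(i)}$ in $B_{r,s}(\delta)$; that is, there exists a unit $u \in B_{r,s}(\delta)$ with $e = u \, e_\lambda^{(i)} u^{-1}$. Here the exact range of $(\lambda,i)$ depends on whether $r \neq s$ or $\delta \neq 0$ versus $\delta = 0$ and $r = s > 0$, but in either case every conjugacy class of primitive idempotents has an $e_\lambda^{(i)}$-representative.

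Next, I would extend scalars to an arbitrary field extension $\K \subset \K'$. Conjugation by $u$ remains an algebra automorphism of $\K' B_{r,s}(\delta) = \K' \otimes_\K \K B_{r,s}(\delta)$, hence carries orthogonal idempotent decompositions of $e$ bijectively to those of $e_\lambda^{(i)}$. Since Corollary \ref{absi} says $e_\lambda^{(i)}$ is absolutely primitive, it admits no non-trivial decomposition as a sum of orthogonal idempotents over $\K'$, and therefore neither does $e$. Thus $e$ is primitive in $\K' B_{r,s}(\delta)$, as required.

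The main ``step'' is really just the observation that absolute primitivity is a conjugation-invariant property, which is immediate. The genuine content of the corollary lies upstream, in the classification of Theorem \ref{idempclass} and in the explicit verification (Proposition \ref{abs} and Corollary \ref{absi}) that the representative idempotents $e_\lambda^{(i)}$ remain primitive after extension of scalars; the corollary is merely the packaging of these facts.
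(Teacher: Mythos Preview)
Your proposal is correct and takes essentially the same approach as the paper, which simply says the corollary follows from Theorem \ref{idempclass} together with Corollary \ref{absi}. Your explicit observation that absolute primitivity is conjugation-invariant just spells out the one-line inference the paper leaves implicit.
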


\begin{proof} This follows from Theorem \ref{idempclass} along with Corollary \ref{absi}.
\end{proof}

\subsection{Classification of indecomposable objects in $\uRep(GL_\delta)$}  Given a bipartition $\lambda$, let  $L(\lambda)$ denote the image of $e_\lambda$ in $\uRep(GL_\delta)$.  Since the primitive idempotent $e_\lambda$ is only defined up to conjugation, $L(\lambda)$ is an indecomposable object in $\uRep(GL_\delta)$ which is defined up to isomorphism.  The following proposition concerning $L(\lambda)$ will be used to prove our upcoming classification of indecomposable objects.  Since the result will be used later in the paper, we record it separately here:

\begin{proposition}\label{HomLs} If $\lambda$ and $\mu$ are bipartitions with $\Hom(L(\lambda),L(\mu))\not=0$, then $|\lambda^\black|+|\mu^\white|=|\lambda^\white|+|\mu^\black|$.
\end{proposition}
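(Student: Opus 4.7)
The plan is to read off the claim directly from the definition of $L(\lambda)$ together with the combinatorial existence criterion for walled Brauer diagrams recorded in Remark~\ref{existdiag}.

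First, I would recall that by definition $L(\lambda) = \im e_\lambda$ where $e_\lambda$ is a primitive idempotent living in $B_{|\lambda^\black|,|\lambda^\white|} = \End(w_{|\lambda^\black|,|\lambda^\white|})$, and likewise $L(\mu) = \im e_\mu$ with $e_\mu \in B_{|\mu^\black|,|\mu^\white|}$. By the notational convention set up in \S\ref{uGLdefn}, I may identify
\[
\Hom_{\uRep(GL_\delta)}(L(\lambda),L(\mu)) = e_\mu \cdot \Hom_{\uRep_0(GL_\delta)}\!\bigl(w_{|\lambda^\black|,|\lambda^\white|},\,w_{|\mu^\black|,|\mu^\white|}\bigr) \cdot e_\lambda,
\]
which is in particular a subspace of $\Hom_{\uRep_0(GL_\delta)}(w_{|\lambda^\black|,|\lambda^\white|},w_{|\mu^\black|,|\mu^\white|})$.

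Next, if this Hom space is non-zero, then the ambient Hom space in $\uRep_0(GL_\delta)$ is non-zero. But by the definition of $\uRep_0(GL_\delta)$ (see \S\ref{skeleton}), that Hom space has as a $\K$-basis the set of $(w_{|\lambda^\black|,|\lambda^\white|},w_{|\mu^\black|,|\mu^\white|})$-diagrams, and so non-vanishing forces the existence of at least one such diagram. Remark~\ref{existdiag} applied with $r=|\lambda^\black|$, $s=|\lambda^\white|$, $r'=|\mu^\black|$, $s'=|\mu^\white|$ gives existence iff $r+s' = r'+s$, which is exactly the required identity $|\lambda^\black|+|\mu^\white| = |\mu^\black|+|\lambda^\white|$.

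There is no real obstacle here --- the argument is essentially bookkeeping. The only point that deserves a moment's care is justifying the identification of $\Hom(L(\lambda),L(\mu))$ as a subspace of the ambient Hom between words $w_{r,s}$ and $w_{r',s'}$; this is immediate from the explicit description of the Karoubi envelope in \S\ref{karoubienvelope} together with the fact that $e_\lambda,e_\mu$ are themselves honest morphisms in the skeleton category $\uRep_0(GL_\delta)$.
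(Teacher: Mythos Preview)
Your proof is correct and follows essentially the same approach as the paper: identify $\Hom(L(\lambda),L(\mu))$ as a subspace of $\Hom(w_{|\lambda^\black|,|\lambda^\white|}, w_{|\mu^\black|,|\mu^\white|})$ via the Karoubi envelope description, then invoke Remark~\ref{existdiag}. The paper's proof is simply a one-line compression of exactly what you wrote.
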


\begin{proof}  $\Hom(L(\lambda),L(\mu))=\Hom(\im e_\lambda,\im e_\mu)\subset\Hom(w_{|\lambda^\black|,|\lambda^\white|}, w_{|\mu^\black|,|\mu^\white|})$, hence the proposition follows from  Remark \ref{existdiag}.
\end{proof}

Now we are ready to classify indecomposable objects in $\uRep(GL_\delta)$.

\begin{theorem}\label{indeclass}  The assignment $\lambda\mapsto L(\lambda)$ induces a bijection $$\left\{\begin{tabular}{c}bipartitions of\\
arbitrary size\end{tabular}\right\}\arup{\text{bij.}}\left\{\begin{tabular}{c}nonzero indecomposable objects in\\
 $\uRep(GL_\delta)$, up to isomorphism\end{tabular}\right\}$$
\end{theorem}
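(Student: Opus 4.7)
The plan is to derive Theorem \ref{indeclass} from the classification Theorem \ref{idempclass} of primitive idempotents in the walled Brauer algebras, combined with Propositions \ref{isomlambdas}, \ref{HomLs}, and \ref{indidem}. Since $\uRep_0(GL_\delta)$ has finite-dimensional Hom-spaces, $\uRep(GL_\delta) = \kar{(\add{\uRep_0(GL_\delta)})}$ is a $\K$-linear Krull--Schmidt category, and every object of $\uRep(GL_\delta)$ decomposes as a direct sum of indecomposable summands of objects $w_{r,s}$. For surjectivity, any nonzero indecomposable $L$ is therefore isomorphic to $\im e$ for a primitive idempotent $e \in B_{r,s}$ by Proposition \ref{indidem}(1); Theorem \ref{idempclass} expresses $e$ as conjugate to some $e_\lambda^{(i)}$, and Proposition \ref{isomlambdas} then gives $L \cong \im e_\lambda^{(i)} \cong L(\lambda)$.

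For injectivity, suppose $L(\lambda) \cong L(\mu)$. Proposition \ref{HomLs} forces $d := |\lambda^\black| - |\lambda^\white| = |\mu^\black| - |\mu^\white|$. Choose $R \geq \max(|\lambda^\black|, |\mu^\black|)$ and set $S = R - d$, $i_\lambda = R - |\lambda^\black|$, $i_\mu = R - |\mu^\black|$. Provided $e_\lambda^{(i_\lambda)}$ and $e_\mu^{(i_\mu)}$ are both defined, they are primitive idempotents in $B_{R,S}$ whose images are isomorphic by Proposition \ref{isomlambdas}; Proposition \ref{indidem}(3) then makes them conjugate in the finite-dimensional algebra $B_{R,S}$, and Theorem \ref{idempclass} forces $\lambda = \mu$. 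Taking $R$ large enough (and, when $d=0$ and $\delta=0$, strictly greater than $\max(|\lambda^\black|, |\mu^\black|)$ so that $0 < i_\lambda, i_\mu < R$ to land in the range of Theorem \ref{idempclass}(2)) handles all cases except $\delta = 0$, $d = 0$, with one of $\lambda, \mu$ equal to $\biemp$; the obstruction is that $e_{\biemp}^{(i)}$ is undefined for $i > 0$ when $\delta = 0$.

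The hard part is therefore the residual subcase $\delta = 0$, $\lambda = \biemp$, $\mu \vdash (r, r)$ with $r > 0$, where $L(\biemp) = \mathbf{1}$. I would argue directly that $\mathbf{1}$ is not a direct summand of $w_{r,r}$ for any $r > 0$ when $\delta = 0$. Otherwise there exist morphisms $i \in \Hom(\mathbf{1}, w_{r,r})$ and $p \in \Hom(w_{r,r}, \mathbf{1})$ with $p \circ i = \id_{\mathbf{1}}$; but each basis diagram in these Hom-spaces is a perfect matching of one row of $w_{r,r}$ with no propagating edges, and the composition of any such pair produces at least one closed cycle when $r > 0$, contributing a factor of $\delta$. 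At $\delta = 0$ every term in $p \circ i$ vanishes, contradicting $p \circ i = \id_{\mathbf{1}}$. Hence $L(\mu) \cong L(\biemp)$ forces $\mu = \biemp$, which together with the preceding paragraph completes the proof of injectivity.
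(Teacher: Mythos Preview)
Your proof is correct and follows essentially the same approach as the paper: surjectivity via Proposition~\ref{indidem}(1), Theorem~\ref{idempclass}, and Proposition~\ref{isomlambdas}; injectivity by placing both idempotents in a common walled Brauer algebra and invoking Proposition~\ref{indidem}(3) and Theorem~\ref{idempclass}; and the residual $\delta=0$, $\biemp$ case via the observation that the composition $\Hom(w_{r,r},\mathbf{1})\times\Hom(\mathbf{1},w_{r,r})\to B_{0,0}$ vanishes for $r>0$. The only cosmetic difference is that the paper, after assuming $|\lambda|\geq|\mu|$, works directly in $B_{|\lambda^\black|,|\lambda^\white|}$ with $e_\lambda$ and $e_\mu^{(i)}$ rather than passing to a larger $B_{R,S}$; note also that in your $\delta=0$, $d=0$ paragraph the condition $i_\lambda,i_\mu<R$ is equivalent to $\lambda,\mu\neq\biemp$ and is independent of the size of $R$, so the strict inequality on $R$ is unnecessary (though harmless).
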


\begin{proof}  By Proposition \ref{indidem}(1) every indecomposable object in $\uRep(GL_\delta)$ is isomorphic  to the image of a primitive idempotent endomorphism in $\uRep_0(GL_\delta)$.  Since every object in $\uRep_0(GL_\delta)$ is isomorphic to $w_{r,s}$ for some $r,s\geq 0$, it follows that each indecomposable object in $\uRep(GL_\delta)$ is isomorphic to the image of a primitive idempotent in $B_{r,s}$ for some $r, s\geq 0$.  Hence, by Theorem \ref{idempclass} and Proposition \ref{isomlambdas} the assignment $\lambda\mapsto L(\lambda)$ is surjective.  

Now suppose $\lambda\vdash(r,s)$ and $\mu\vdash(r',s')$ are two bipartitions with $L(\lambda)\cong L(\mu)$.  For convenience, assume $r\geq r'$ so that $(r, s)=(r'+i, s'+i)$ for some integer $i\geq 0$ (Proposition \ref{HomLs}).  If $\mu=\biemp$, then the existence of an isomorphism $L(\biemp)\rightarrow L(\lambda)$ implies that the composition map \begin{equation}\label{comp}\Hom(w_{r,s},{\bf 1})\times\Hom({\bf 1}, w_{r,s})\to B_{0,0}\end{equation} is nonzero.  If $\delta=0$, then the map (\ref{comp}) is necessarily zero unless $r=s=0$.  Hence, if $\mu=\biemp$ and $\delta=0$, then $\lambda=\biemp$ too.  Now assume $\mu\not=\biemp$ or $\delta\not=0$ so that $e_\mu^{(i)}$ is defined.  By Proposition  \ref{isomlambdas},  $\im  e_\mu^{(i)}\cong \im e_\mu\cong\im e_\lambda$, which implies $e^{(i)}_\mu$ and $e_\lambda$ are conjugate idempotents in $B_{r,s}$ (see Proposition \ref{indidem}(3)).  By Theorem \ref{idempclass}, $\lambda=\mu$.  Thus the assignment $\lambda\mapsto L(\lambda)$ is injective.
\end{proof}

\begin{remark} Instead of relying on \cite{CDDM}, one can prove Theorem \ref{indeclass} with straightforward modifications of the proof of \cite[Theorem 3.7]{CO1}.
\end{remark}

We end this subsection with a couple propositions concerning $L(\lambda)$ which will be useful later.

\begin{proposition}\label{zdecomp}  Given a bipartition $\lambda\vdash(r,s)$, \begin{equation*}\label{imz}L((\lambda^\black,\emp))\otimes L((\emp,\lambda^\white))=\im z_\lambda=L(\lambda)\oplus L(\mu^{(1)})\oplus\cdots\oplus L(\mu^{(k)})\end{equation*} for some bipartitions $\mu^{(1)},\ldots,\mu^{(k)}$ which have the property    $\mu^{(j)}\vdash(r-i_j,s-i_j)$ with $0<i_j\leq\min(r,s)$ for all $j=1,\ldots, k$.
\end{proposition}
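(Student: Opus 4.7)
The plan is to prove the two equalities in turn. For the identification $L((\lambda^\black,\emp))\otimes L((\emp,\lambda^\white)) = \im z_\lambda$: by Example \ref{elamexs}(1) we have $e_{(\lambda^\black,\emp)} = z_{\lambda^\black}^\black$ in $B_{|\lambda^\black|,0}$ and $e_{(\emp,\lambda^\white)} = z_{\lambda^\white}^\white$ in $B_{0,|\lambda^\white|}$, so $L((\lambda^\black,\emp)) = \im z_{\lambda^\black}^\black$ and $L((\emp,\lambda^\white)) = \im z_{\lambda^\white}^\white$. By definition $z_\lambda = z_{\lambda^\black}^\black \otimes z_{\lambda^\white}^\white$, and since the tensor product of splittings of two idempotents is itself a splitting of the tensor-product idempotent (an immediate check from the bifunctoriality of $\otimes$), one obtains $\im z_\lambda \cong \im z_{\lambda^\black}^\black \otimes \im z_{\lambda^\white}^\white$, giving the first equality.

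For the direct sum decomposition, write $z_\lambda = f_1 + \cdots + f_N$ as a sum of pairwise orthogonal primitive idempotents in $B_{r,s}$. Proposition \ref{indidem}(2) then yields $\im z_\lambda \cong \bigoplus_j \im f_j$ with each summand indecomposable. Applying the algebra surjection $\pi : B_{r,s} \onto \K[\Sigma_r\times\Sigma_s]$, and using \eqref{piprop}, the elements $\pi(f_j)$ are pairwise orthogonal idempotents of $\K[\Sigma_r\times\Sigma_s]$ summing to $\pi(z_\lambda) = z_\lambda$. Since $z_\lambda$ is primitive in $\K[\Sigma_r\times\Sigma_s]$, exactly one $\pi(f_j)$ is nonzero, which is precisely the defining property of $e_\lambda$; hence one summand, say $f_1$, is conjugate in $B_{r,s}$ to $e_\lambda$, and Proposition \ref{indidem}(3) gives $\im f_1 \cong L(\lambda)$.

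For the remaining $f_j$ with $j \geq 2$ we have $\pi(f_j) = 0$. By Theorem \ref{idempclass}, each such $f_j$ is conjugate in $B_{r,s}$ to $e_{\mu^{(j)}}^{(i_j)}$ for some bipartition $\mu^{(j)} \vdash (r-i_j, s-i_j)$ with $0 \leq i_j \leq \min(r,s)$. Conjugation commutes with the algebra map $\pi$, and $\pi(e_\nu) = z_\nu \neq 0$ for every bipartition $\nu$ (by construction of $e_\nu$ together with \eqref{piprop}), so $i_j = 0$ is excluded; thus $i_j > 0$. Proposition \ref{isomlambdas} then gives $\im f_j \cong \im e_{\mu^{(j)}}^{(i_j)} \cong \im e_{\mu^{(j)}} = L(\mu^{(j)})$, completing the decomposition. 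The one technical point behind this argument — needed to deduce $\pi(e_{\mu^{(j)}}^{(i_j)}) = 0$ from $i_j > 0$, equivalently that the kernel $J$ of $\pi$ contains every $e_\nu^{(i)}$ with $i \geq 1$ — is that each recursive step in the construction of $e_\nu^{(i)}$ post- or pre-composes with a morphism built from $\coev_\black$ or $\ev_\white$, forcing every diagrammatic summand of $e_\nu^{(i)}$ to contain a non-propagating edge. I expect this routine bookkeeping to be the only delicate (if minor) obstacle in turning the plan into a complete proof.
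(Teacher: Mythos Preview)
Your proof is correct and follows essentially the same route as the paper's: identify $z_\lambda$ with the tensor of the two ``one-sided'' primitive idempotents via Example \ref{elamexs}(1), decompose $z_\lambda$ into orthogonal primitives in $B_{r,s}$, observe that exactly one survives under $\pi$ (giving the $L(\lambda)$ summand), and use Theorem \ref{idempclass} together with $\pi(e_\nu)\neq 0$ to force $i_j>0$ for the rest. Your closing ``technical point'' is unnecessary, and in fact stated in the wrong direction: you need that $\pi(e_{\mu^{(j)}}^{(i_j)})=0$ implies $i_j>0$ (equivalently, $\pi(e_\nu^{(0)})\neq 0$), which you already established, not the converse.
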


\begin{proof}  First, using Example \ref{elamexs}(1) we have $e_{(\lambda^\black,\emp)}\otimes e_{(\emp,\lambda^\white)}=z^\black_{\lambda^\black}\otimes z^\white_{\lambda^white}=z_\lambda$ which implies the left equality.  For the right equality, notice that by the definition of $e_\lambda$ we can write $z_\lambda=e_\lambda+e_1+\cdots+e_k$ where $e_\lambda, e_1,\ldots, e_k$ are mutually orthogonal primitive idempotents in $B_{r,s}$.  Moreover,  $\pi(e_j)=0$ for all $j=1,\ldots,k$.  By Theorem \ref{idempclass}, there exists a bipartition $\mu^{(j)}\vdash(r-i_j, s-i_j)$ for some $0\leq i_j\leq\min(r,s)$ such that $e_j$ is conjugate to $e_{\mu^{(j)}}^{(i_j)}$ for all $j=1,\ldots, k$.  It follows that $\pi(e_{\mu^{(j)}}^{(i_j)})=0$, which implies $i_j\not=0$ for each $j=1,\ldots,k$.  Finally, by  Propositions \ref{indidem}(2) and \ref{isomlambdas} we are done.
\end{proof}

The following example illustrates Proposition \ref{zdecomp}.

\begin{example}\label{zboxdec} (1) Assume $\delta\not=0$. Then by Examples \ref{elamexs}(2) and \ref{elamiexs}(1) we have the following orthogonal decomposition of $z_\bibox=\id_{\black\white}$ into primitive idempotents: $z_\bibox=e_\bibox+e_\biemp^{(1)}$.   Hence $\black\white=L(\bibox)\oplus L(\biemp)$ in $\uRep(GL_\delta)$.

(2)  When $\delta = 0$, $z_\bibox=e_\bibox$, by Example \ref{elamexs}(2). 
Hence $\black\white=L(\bibox)$ in $\uRep(GL_0)$.  
\end{example}

\begin{proposition}\label{Ldual} $L(\lambda)^*=L(\lambda^*)$.
\end{proposition}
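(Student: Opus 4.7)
The plan is to pull the duality through to the level of idempotents. First I would verify the general principle, valid in any rigid monoidal category, that an idempotent $e \in \End X$ with splitting $(\im e, \inj_e, \proj_e)$ gives rise to an idempotent $e^* \in \End X^*$ with splitting $(\im(e)^*, \proj_e^*, \inj_e^*)$, so that $(\im e)^* \cong \im(e^*)$. Applied to $e_\lambda \in B_{r,s}$ with $r = |\lambda^\black|$ and $s = |\lambda^\white|$, this reduces the task to showing that $e_\lambda^*$, viewed as an element of $B_{s,r}$ after identifying the dual word $w_{r,s}^*$ with $w_{s,r}$ via the braiding, is conjugate to $e_{\lambda^*}$ in $B_{s,r}$.

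Second, I would compute the dual of $z_\lambda = z_{\lambda^\black}^\black \otimes z_{\lambda^\white}^\white$. Read off the diagrams, the dual operation (after identification by the braiding) is a vertical flip together with a swap of colors, which interchanges $B_{r,0}$ and $B_{0,r}$ via $\sigma^\black \mapsto (\sigma^{-1})^\white$. Since the simple $\KS_r$-modules are self-dual, the primitive idempotent $z_\alpha^\black$ is carried to an idempotent conjugate to $z_\alpha^\white$. Combined with contravariance of duality under tensor product, namely $(f \otimes g)^* = g^* \otimes f^*$, this gives $z_\lambda^* = z_{\lambda^*}$ up to conjugation in $B_{s,r}$.

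The crux of the proof is then to identify $e_\lambda^*$ as the distinguished primitive summand of $z_{\lambda^*}$ in the sense of Proposition \ref{zdecomp}---namely, the unique summand with nonzero image under $\pi$. For this, I would establish that duality intertwines the two projections
$$ \pi : B_{r,s} \onto \K[\Sigma_r \times \Sigma_s], \qquad \pi : B_{s,r} \onto \K[\Sigma_s \times \Sigma_r], $$
which follows at once because the vertical flip preserves the number of propagating edges in each diagram, hence preserves the defining ideal $J$ of $\pi$. Applying this gives $\pi(e_\lambda^*) = (\pi(e_\lambda))^* = (z_\lambda)^* = z_{\lambda^*} \neq 0$, so by the uniqueness clause in Proposition \ref{zdecomp} the primitive idempotent $e_\lambda^*$ must coincide with $e_{\lambda^*}$ up to conjugation. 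The only substantive bookkeeping is tracking the braiding identification $w_{r,s}^* \cong w_{s,r}$ together with the various ``up-to-conjugation'' caveats; the combinatorial core---compatibility of $\pi$ with duality---is immediate.
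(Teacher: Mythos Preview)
Your argument is correct and in fact more direct than the paper's. Both proofs begin the same way, computing that $(z_\lambda)^* = z_{\lambda^*}$ using $(\sigma^\black)^* = (\sigma^{-1})^\white$ together with the invariance of $z_\alpha$ under $\sigma \mapsto \sigma^{-1}$. From there the routes diverge. The paper passes to images, obtaining $(\im z_\lambda)^* \cong \im z_{\lambda^*}$, and then runs an induction on $|\lambda|$ through Proposition~\ref{zdecomp}: dualizing the decomposition $\im z_\lambda = L(\lambda) \oplus \bigoplus_j L(\mu^{(j)})$ and matching it against the analogous decomposition of $\im z_{\lambda^*}$ via Krull--Schmidt, the size constraint forces $L(\lambda)^*$ to pair with $L(\lambda^*)$. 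You instead stay at the level of idempotents and exploit the \emph{definition} of $e_\lambda$ (not Proposition~\ref{zdecomp}, which is a slight misattribution on your part): since the duality/flip preserves the number of propagating edges, it carries the ideal $J$ to $J$ and hence intertwines the two projections $\pi$, giving $\pi(e_\lambda^*) = (\pi(e_\lambda))^* = z_{\lambda^*} \neq 0$; so $e_\lambda^*$ is the distinguished summand in a primitive orthogonal decomposition of $z_{\lambda^*}$, i.e.\ conjugate to $e_{\lambda^*}$. Your approach buys a cleaner, induction-free proof that isolates the combinatorial reason the statement holds (duality preserves propagating edges); the paper's approach avoids tracking the braiding identification $w_{r,s}^* \cong w_{s,r}$ and the compatibility of $\pi$ with $*$, at the cost of an induction and an implicit appeal to Krull--Schmidt uniqueness.
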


\begin{proof} Given $\sigma\in\Sigma_r$, it is easy to check that the dual morphisms $(\sigma^\black)^*=(\sigma^\white)^{-1}$ and $(\sigma^\white)^*=(\sigma^\black)^{-1}$.  Given a partition $\alpha\vdash r$, the idempotent $z_\alpha\in\KS_r$ is invariant under the operation $\KS_r\to\KS_r$, $\sigma\mapsto\sigma^{-1}$.  Hence, $z_\lambda^*=(z^\black_{\lambda^\black}\otimes z^\white_{\lambda^\white})^*=z^\white_{\lambda^\black}\otimes z^\black_{\lambda^\white}.$  Hence, up to isomorphism we have $$(\im z_\lambda)^*=\im(z_\lambda^*)=\im(z^\white_{\lambda^\black})\otimes \im(z^\black_{\lambda^\white})=\im(z^\black_{\lambda^\white})\otimes\im(z^\white_{\lambda^\black})=\im(z_{\lambda^*}).$$
  The result now follows from Proposition \ref{zdecomp} after inducting  on  $|\lambda|$.
\end{proof}

\subsection{Indecomposable summands of mixed tensor powers}\label{mtp}
Let $V$ denote a $\delta$-dimensional object of a tensor category $\cat T$, let $V^*$ denote a dual for $V$ in $\cat T$, and write
$$T(r,s) = T_{V, V^*} (r,s) =  V^{\otimes r} \otimes {V^*}^{\otimes s}$$
for the \newterm{mixed tensor power} of $V$.
The following theorem gives a useful criterion for the fullness of the functor 
$$ F : \uRep(GL_\delta) \to \cat T, \qquad F(\black) \mapsto V $$
defined by Proposition \ref{uni}, and moreover shows that any indecomposable summand of a $T(r,s)$ is isomorphic to the image under $F$ of an indecomposable object from $ \uRep(GL_\delta)$.
This will be later applied in \S\ref{Fd} and \S\ref{super}, where $V$ will denote the natural representations of the general linear group and the general linear supergroup, respectively.
 
\begin{theorem}\label{fullness}
Suppose that the $\K$-algebra maps
$ \K \Sigma_p \to \End_{\cat T} (V^{\otimes p}) $
defined by the symmetric braiding of $\cat T$ are surjective, and $\Hom_{\cat T} (T(r,s), T(r',s')) = 0$ whenever $r+s' \not = r'+s$.  Then:
\begin{enumerate}
\item F is full.\\
\item $ \Lambda = \{ \ F ( L(\lambda) ) \  | \ \lambda \text{ is a bipartition, } F ( L(\lambda) ) \ne 0 \ \} $
is a complete set of indecomposable summands of the mixed tensor powers $T(r,s)$.  Moreover, the members of $\Lambda$ are pairwise non-isomorphic.
\end{enumerate}
\end{theorem}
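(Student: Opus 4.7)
For part (1), my plan is to reduce the fullness of $F : \uRep(GL_\delta) \to \cat T$ to the fullness of its restriction to the skeleton $\uRep_0(GL_\delta)$. This reduction uses that every object of $\uRep(GL_\delta)$ is by construction a direct summand of a biproduct of objects of $\uRep_0(GL_\delta)$, together with the additivity of $F$. Since every object of $\uRep_0(GL_\delta)$ is naturally isomorphic to some $w_{r,s}$ via the symmetric braiding, it remains to show that the maps $\Hom_{\uRep_0}(w_{r,s}, w_{r',s'}) \to \Hom_{\cat T}(T(r,s), T(r',s'))$ are surjective. When $r+s' \ne r'+s$ both sides vanish, by Remark \ref{existdiag} on the source and by hypothesis on the target. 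When $r+s' = r'+s =: p$, I would use the rigidity of both categories to ``bend'' every morphism into an endomorphism of $V^{\otimes p}$ (respectively of $w_{p,0}$): the source becomes $\End_{\uRep_0}(w_{p,0}) = B_{p,0} \cong \K\Sigma_p$, the target becomes $\End_{\cat T}(V^{\otimes p})$, and these identifications are compatible with $F$ by \eqref{imageofadual} together with the strictness of $F$ on $\uRep_0$. Under these identifications, $F$ restricts to the canonical $\K$-algebra map $\K\Sigma_p \to \End_{\cat T}(V^{\otimes p})$, whose surjectivity is hypothesized.

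For part (2), my plan is to transport the Krull--Schmidt decomposition in $\uRep(GL_\delta)$ to $\cat T$ along $F$. By Theorem \ref{indeclass} and the Krull--Schmidt property of $\uRep(GL_\delta)$, write $w_{r,s} = \bigoplus_i L(\lambda_i)$ for some multiset of bipartitions $\lambda_i$; applying the additive functor $F$ gives $T(r,s) = \bigoplus_i F(L(\lambda_i))$ in $\cat T$. Proposition \ref{distinctind}, applied using the fullness now in hand, shows that each non-zero $F(L(\lambda_i))$ is indecomposable and that distinct bipartitions yield non-isomorphic images; this also settles the pairwise non-isomorphism of the members of $\Lambda$. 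Furthermore, the fullness of $F$ yields a surjection $\End_{\uRep(GL_\delta)} L(\lambda) \onto \End_{\cat T} F(L(\lambda))$, so by Corollary \ref{endringind} the endomorphism algebra $\End_{\cat T} F(L(\lambda))$ is local whenever $F(L(\lambda))$ is non-zero (as a non-zero quotient of a local ring).

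To conclude, I need to show that any indecomposable direct summand $Y$ of any $T(r,s)$ is isomorphic to a member of $\Lambda$. For this I would invoke the Krull--Schmidt--Azumaya theorem in its preadditive form: an indecomposable direct summand of a finite biproduct of objects with local endomorphism rings is isomorphic to one of the summands. Applied to $T(r,s) = \bigoplus_i F(L(\lambda_i))$ after discarding the zero terms (the remaining summands having local endomorphism rings by the previous paragraph), this forces $Y \cong F(L(\lambda_i))$ for some $i$. I expect the principal technical obstacle to be the coherence chase in part (1): verifying that the rigidity adjunctions on the two sides genuinely intertwine $F$ with the prescribed symmetric-group action on $V^{\otimes p}$. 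This is routine but fiddly, and rests on $F$ being strict on $\uRep_0$ so that tensor product, braiding, evaluation, and coevaluation are all preserved on the nose.
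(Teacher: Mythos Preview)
Your treatment of part~(1) is essentially the paper's: reduce to $\uRep_0(GL_\delta)$, then to the objects $w_{r,s}$, and use rigidity to identify $\Hom(w_{r,s},w_{r',s'})$ with $\End(\bullet^{\otimes p})\cong\K\Sigma_p$ compatibly with $F$, so that the surjectivity hypothesis applies directly. The paper makes the ``bending'' isomorphisms explicit via a commutative square with explicit inverses, but the content is the same.

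For part~(2) the two arguments share the use of Proposition~\ref{distinctind} for indecomposability and pairwise non-isomorphism of the $F(L(\lambda))$, but establish completeness of $\Lambda$ differently. You push the fixed decomposition $w_{r,s}=\bigoplus_i L(\lambda_i)$ through $F$ and then invoke Krull--Schmidt--Azumaya in $\cat T$ (legitimate, since you verified that the nonzero $F(L(\lambda_i))$ have local endomorphism rings) to pin down an arbitrary indecomposable summand $Y$ of $T(r,s)$. The paper instead starts from $Y$: it lifts the idempotent of $\End_{\cat T} T(r,s)$ cutting out $Y$ along the surjection $B_{r,s}\twoheadrightarrow\End_{\cat T} T(r,s)$ furnished by part~(1) to an idempotent $e\in B_{r,s}$, decomposes $\im e$ upstairs as $\bigoplus_j L(\lambda^{(j)})$, and pushes back down to exhibit $Y$ as a direct sum of $F(L(\lambda^{(j)}))$'s, hence equal to one of them. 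The paper's route avoids any appeal to the exchange lemma in $\cat T$ (which is not assumed Krull--Schmidt), at the cost of an implicit idempotent-lifting step for finite-dimensional $\K$-algebras; your route trades that step for a standard Krull--Schmidt argument downstairs. Both are correct.
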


\begin{proof}
To prove the first part, it suffices to show the restriction of $F$ to 
$\uRep_0(GL_{\delta})$ is full.  Since every object in $\uRep_0(GL_{\delta})$ is isomorphic (by braidings) to a word of the form $w_{r,s}$ for some $r,s$, and the restriction of $F$ to $\uRep_0(GL_{\delta})$ is strict (hence it preserves braidings) it suffices to show 
\begin{equation}\label{SurF}
F:\Hom_{\uRep_0(GL_{\delta})}(w_{r,s},w_{r',s'})\to\Hom_{\cat T}(T(r,s),T(r',s'))
\end{equation} 
is surjective for every $r,r',s,s'$.  
By hypothesis, we can assume $r+s'=r'+s$.  
Consider the diagram 
\begin{equation}\label{commdiag}
\xymatrix{\Hom_{\uRep_0(GL_\delta)}(w_{r,s},w_{r',s'})\ar[r]\ar[d]_{F} & \End_{\uRep_0(GL_\delta)}(\black^{\otimes r+s'})\ar[d]_{F}\cr
\Hom_{\cat T}(T(r,s),T(r',s')) \ar[r] & \End_{\cat T}(V^{\otimes r+s'})}
\end{equation} where the horizontal maps are given by 
\begin{equation}\label{hor1}
f\mapsto(\id_\black^{\otimes r'}\otimes\ev_{w_{s',0}}\otimes\id_\black^{\otimes s})(f\otimes c_{w_{s,0},w_{s',0}})(\id_\black^{\otimes r}\otimes \coev_{w_{0,s}}\otimes\id_\black^{\otimes s'})
\end{equation} and 
\begin{equation}\label{hor2}
f\mapsto(\id_V^{\otimes r'}\otimes\ev_{T(s',0)}\otimes\id_V^{\otimes s})(f\otimes c_{T(s,0),T(s',0)})(\id_V^{\otimes r}\otimes \coev_{T(0,s)}\otimes\id_V^{\otimes s'}).
\end{equation}  
Since the restriction of $F$ to $\uRep_0(GL_\delta)$ is a strict tensor functor, the diagram (\ref{commdiag}) commutes.  Moreover, the maps (\ref{hor1}) and (\ref{hor2}) are $\K$-vector space isomorphisms with inverses 
\begin{equation*}
f\mapsto(\id_\black^{\otimes r'}\otimes\ev_{w_{0,s}}\otimes\id_\white^{\otimes s'})(f\otimes c_{w_{0,s'},w_{0,s}})(\id_\black^{\otimes r}\otimes \coev_{w_{s',0}}\otimes\id_\white^{\otimes s})
\end{equation*} and 
\begin{equation*}
f\mapsto(\id_V^{\otimes r'}\otimes\ev_{T(0,s)}\otimes\id_{V^*}^{\otimes s})(f\otimes c_{T(0,s'),T(0,s)})(\id_V^{\otimes r}\otimes \coev_{T(s',0)}\otimes\id_{V^*}^{\otimes s})
\end{equation*}
respectively (this is easily verified using diagram calculus for tensor categories  \cite{Selinger}).  
Since the rightmost vertical map is surjective, by hypothesis, we are done.

We now prove the second part.
A summand $W$ of $T(r,s)$ is the image of an idempotent in $\End_{\cat T}(T(r,s))$.  
By part (1), such an idempotent has a pre-image $e\in B_{r,s}$ under $F$.  Write 
$\im e=L(\lambda^{(1)})\oplus\cdots\oplus L(\lambda^{(k)})$
in $\uRep(GL_\delta)$ for some bipartitions $\lambda^{(i)}$ (see Theorem \ref{indeclass}).  
Then $W=F(L(\lambda^{(1)}))\oplus\cdots\oplus F(L(\lambda^{(k)}))$ in $\cat T$, and by Proposition \ref{distinctind} the $F(L(\lambda^{(i)}))$ are indecomposable.  
Thus if $W$ is indecomposable, then $W=F(L(\lambda^{(i)}))$ for some $i$.
\end{proof}

\subsection{Generic semisimplicity of $\uRep(GL_\delta)$}  We close this section with the following theorem which tells us exactly when the category $\uRep(GL_\delta)$ is semisimple\footnote{Recall that a $\K$-linear Krull Schmidt category $\cat{C}$ is semisimple if and only if $\End_\cat{C}(L)$ is a finite $\K$-dimensional division algebra for all indecomposable objects $L$ and $\Hom_\cat{C}(L,L')=0$ for all non-isomorphic indecomposable objects $L, L'$.}

\begin{theorem}\label{ss}
$\uRep(GL_\delta)$ is semisimple if and only if $\delta$ is not an integer.
\end{theorem}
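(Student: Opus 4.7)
The plan is to reduce the problem to semisimplicity of every walled Brauer algebra $B_{r,s}(\delta)$, and then invoke the known semisimplicity criterion for these algebras from \cite{CDDM} and \cite{CD}.

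First I would establish that $\uRep(GL_\delta)$ is semisimple if and only if $B_{r,s}(\delta)$ is semisimple for every $r,s\geq 0$. The forward direction is standard: in a semisimple Krull-Schmidt category the object $w_{r,s}$ decomposes as a finite biproduct $\bigoplus_\lambda L(\lambda)^{\oplus m_\lambda}$, so $B_{r,s}(\delta) = \End(w_{r,s})$ is a direct sum of matrix algebras over the division rings $\End(L(\lambda))$ and hence semisimple. For the converse, $\End(L(\lambda)) = e_\lambda B_{r,s}(\delta) e_\lambda$ is automatically a division ring whenever $B_{r,s}(\delta)$ is semisimple (since $e_\lambda$ is primitive); and for $\lambda\neq\mu$, vanishing of $\Hom(L(\lambda),L(\mu))$ reduces via Proposition \ref{HomLs} and Proposition \ref{isomlambdas} to the observation that the non-conjugate primitive idempotents $e_\lambda^{(i)}, e_\mu^{(j)}$ of Theorem \ref{idempclass} satisfy $e_\mu^{(j)} B_{r,s}(\delta) e_\lambda^{(i)} = 0$ inside the common semisimple algebra $B_{r,s}(\delta)$.

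Next I would invoke the semisimplicity classification for the walled Brauer algebras: $B_{r,s}(\delta)$ is semisimple for all $r,s\geq 0$ if and only if $\delta\notin\Z$. The direction ``$\delta\notin\Z$ implies semisimple'' follows from the cellular structure of \cite{CDDM} together with the recursive decomposition-number formula of \cite{CD}, which evaluates to the identity matrix at generic $\delta$, forcing each cell module to coincide with its simple head and hence the algebra to be semisimple of the expected dimension.

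For the converse direction, I need to exhibit non-semisimplicity of some $B_{r,s}(\delta)$ whenever $\delta$ is an integer. The case $\delta = 0$ is immediate from Example \ref{elamexs}(2): the cap-cup morphism $u\in B_{1,1}(0)$ satisfies $u^2 = \delta u = 0$, providing a nonzero nilpotent element. For nonzero integer $\delta$, the decomposition-number computation in \cite{CD} yields a bipartition $\lambda$ and a pair $(r,s)$ with a nontrivial cap diagram, hence a nonzero off-diagonal decomposition number, which witnesses non-semisimplicity of $B_{r,s}(\delta)$. The main obstacle is the implication ``$\delta\notin\Z$ implies $B_{r,s}(\delta)$ semisimple,'' which genuinely requires the full cellular and decomposition-number machinery of \cite{CDDM} and \cite{CD}; I would cite these results rather than rebuild the apparatus from scratch.
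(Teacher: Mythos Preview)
Your proposal is correct and follows essentially the same strategy as the paper: reduce semisimplicity of $\uRep(GL_\delta)$ to semisimplicity of all the walled Brauer algebras $B_{r,s}(\delta)$, then invoke the literature. The reduction you sketch (via Proposition~\ref{HomLs}, Proposition~\ref{isomlambdas}, and Theorem~\ref{idempclass}) matches the paper's argument exactly.

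The one place you take an unnecessary detour is in establishing the semisimplicity criterion for the $B_{r,s}(\delta)$ themselves. You route this through the decomposition-number computation of \cite{CD}, arguing that at non-integer $\delta$ the decomposition matrix is the identity and hence the cellular algebra is semisimple, and conversely that at integer $\delta$ some off-diagonal decomposition number is nonzero. This works, but it is more machinery than needed: the paper simply cites \cite[Theorem~6.3]{CDDM}, which already gives the complete semisimplicity classification of the walled Brauer algebras directly, without requiring \cite{CD} at all. Your $\delta=0$ nilpotent argument is a nice concrete witness, but again \cite{CDDM} covers all integer $\delta$ uniformly.
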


\begin{proof}   The semisimplicity of the walled Brauer algebras is completely determined in  \cite[Theorem 6.3]{CDDM}.  In particular, if $\delta\in\Z$ then $B_{r,s}(\delta)$ is not semisimple for some $r$ and $s$.  In this case, using Theorem \ref{idempclass}, there exist distinct bipartitions $\lambda\vdash(r-i,s-i)$ and $\mu\vdash(r-j,s-j)$ for some $i, j\geq 0$ with $e_\mu^{(j)}B_{r,s}e_\lambda^{(i)}\not=0$.  Since $e_\mu^{(j)}B_{r,s}e_\lambda^{(i)}=\Hom(\im e_\lambda^{(i)}, \im e_\mu^{(j)})=\Hom(L(\lambda),L(\mu))$, we are done in case $\delta\in\Z$.  

Now assume $\delta\not\in\Z$, and let $\lambda$ and $\mu$ be bipartitions with $\Hom(L(\lambda),L(\mu))\not=0$.  If $\lambda\vdash(r,s)$, then $\mu\vdash(r-i,s-i)$ for some $i$ (Proposition \ref{HomLs}).  We proceed under the assumption $i\geq 0$, 
the case $i \leq 0$ being dual, by Proposition \ref{Ldual}.
Then 
\begin{equation}\label{someHom}
0\not=\Hom(L(\lambda),L(\mu))=e_\mu^{(i)}B_{r,s}e_\lambda.
\end{equation}  Since $\delta\not\in\Z$,  $B_{r,s}$ is a semisimple algebra, hence (\ref{someHom}) can only be true if $e_\lambda$ and $e_\mu^{(i)}$ are conjugate, which implies $\lambda=\mu$ (Theorem \ref{idempclass}).  Moreover, the semisimplicity of $B_{r,s}$ implies  $\End(L(\lambda))=e_\lambda B_{r,s}e_\lambda$ is indeed a division algebra.
\end{proof}

\section{Connection to representations of the general linear group}\label{Fd}

Fix a nonnegative integer $d$ and consider the category $\Rep(GL_d)$ of finite dimensional representations of the general linear group $GL_d$ over $\K$.   In this short section we describe how the categories $\uRep(GL_\delta)$ and $\Rep(GL_d)$ are related.  One can view this section as a preview of \S\ref{super} where we show how $\uRep(GL_\delta)$ is related to representations of general linear supergroups.  However, we will see in \S\ref{tensordecomp} that formulas for decomposing tensor products in $\uRep(GL_\delta)$ can be obtained by interpolating decomposition formulas in $\Rep(GL_d)$; hence this preview of \S\ref{super} is also important to the structure of the paper.

\subsection{The category $\Rep(GL_d)$}\label{GenLin}
Let $GL_d = GL(d, \K)$ denote the general linear group, that is, the group of invertible $d \times d$-matrices with entries from $\K$, and write $V$ for the $d$-dimensional natural module.
Let $\Rep (GL_d)$ denote the category of finite-dimensional $GL_d$-modules.
The tensor product of $GL_d$-modules defined in the usual way, and $\unit$ denoting the trivial one-dimensional module, $\Rep(GL_d)$ becomes a monoidal category as per example \ref{vectismonoidal}.
Note that, in particular, we consider the monoidal category $\Rep(GL_d)$ to be strict.

For any object $U$ of $\Rep(GL_d)$, let $U^*$, $\ev_U$, $\coev_U$ be defined as per example \ref{vectistensor}.
Recall that $U^*$ is a $GL_d$-module via
\begin{equation}\label{dualaction}(x \cdot \phi) (u) = \phi (x^{-1} \cdot u),\qquad(x\in GL_d, u \in U, \phi \in U^*)\end{equation}
Then $\ev_U$, $\coev_U$ are $GL_d$-module maps, and so $(U^*, \ev_U, \coev_U)$ is a dual for $U$ in $\Rep(GL_d)$.
Thus $\Rep(GL_d)$ is a tensor category.
It is well known that $\Rep(GL_d)$ is semisimple (see, e.g. \cite{FH}).
In particular, an object is simple if and only if it is indecomposable.

Let $\ep_i$ denote the function that takes any matrix to the its $(i,i)$ entry for $i = 1, \ldots d$, and let $\weights$ denote the set of weights.  
For $k \in \Z$, letting
$$ \weights_k = \{ \gamma = \sum_{i=1}^d \gamma_i \ep_i | \gamma_i \in \Z, \gamma_1 \geqslant \cdots \geqslant \gamma_d, \sum_i \gamma_i = k \},$$
we have $\weights = \sqcap_{k \in \Z} \weights_k$.
Write $\gamma_i$ for the coefficients of $\gamma \in \weights$ with respect to the $\ep_i$.
For $\gamma \in \weights$, let $V(\gamma)$ denote the finite-dimensional highest-weight $GL_d$-module of highest-weight $\gamma$.
The highest-weight modules classify the indecomposable objects of $\Rep(GL_d)$ up to isomorphism.
That is, any indecomposable object of $\Rep(GL_d)$ is isomorphic to $V_\gamma$ for some $\gamma \in \weights$, and if $V_{\gamma} \cong V_{\gamma'}$ for $\gamma, \gamma' \in \weights$, then $\gamma = \gamma'$.
The weights $\weights$ are in bijection with bipartitions $\lambda$ with $l(\lambda) \leq d$, via
\begin{equation}\label{wt}wt(\lambda)=\sum_{i>0}\lambda^\black_i\ep_i-\sum_{j>0}\lambda^\white_j\ep_{d-j+1}.\end{equation}
Writing $V_\lambda$ for $V_{wt(\lambda)}$, the isomorphism classes of indecomposable objects in $\Rep(GL_d)$ are thus parameterized by such bipartitions.

For $k \in \Z$, define a partial order $\leqslant$ on $\weights_k$ by declaring
$$ \gamma \leqslant \gamma' \quad \Leftrightarrow \quad \gamma_1 + \cdots \gamma_k \leqslant \gamma'_1 + \cdots \gamma'_k, \quad k = 1, \ldots, d.$$
If $\gamma \in \weights$, then there exists $k$ such that all weights of the highest-weight module $V(\gamma)$ belong to $\weights_k$ ($k$ is the rational degree of the module).
Thus the set of the weights of $V(\gamma)$ are partially ordered.
If $\gamma'$ is a weight of $V(\gamma)$, then $\gamma' \leq \gamma$.

\begin{proposition} \label{dualwts} 
Let $\lambda$ be a bipartition with $l(\lambda) \leq d$.  Then $(V_\lambda)^*=V_{\lambda^*}$.
\end{proposition}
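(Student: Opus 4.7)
The plan is to reduce the claim to the standard fact that for any finite-dimensional highest-weight $GL_d$-module $V(\gamma)$, the dual $V(\gamma)^*$ is isomorphic to $V(-w_0 \gamma)$, where $w_0$ is the longest element of the Weyl group of $GL_d$ acting by reversing the coordinates $\gamma_1,\ldots,\gamma_d$. Once this is in hand, the result becomes an explicit weight computation using the formula \eqref{wt}.

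First I would recall why $V(\gamma)^* \cong V(-w_0\gamma)$. From the dual action \eqref{dualaction}, if $v \in V(\gamma)$ has weight $\mu$ then any $\phi \in V(\gamma)^*$ supported on the $\mu$-weight space has weight $-\mu$. Hence the weights of $V(\gamma)^*$ are precisely the negatives of the weights of $V(\gamma)$, and the highest weight of $V(\gamma)^*$ is obtained by applying $-w_0$ to the lowest weight of $V(\gamma)$, which is $w_0\gamma$. This identifies $V(\gamma)^*$ with $V(-w_0\gamma)$ in $\Rep(GL_d)$.

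Next I would compute $-w_0\, wt(\lambda)$ directly. Because $l(\lambda^\black) + l(\lambda^\white) \leq d$, the $\ep_i$-expansion of $wt(\lambda)$ given by \eqref{wt} has disjoint ``black'' and ``white'' supports: the black part occupies the first $l(\lambda^\black)$ coordinates, and the white part occupies the last $l(\lambda^\white)$ coordinates (with a minus sign and in reversed order). Applying $-w_0$, which sends $\sum_i c_i \ep_i$ to $-\sum_i c_{d-i+1}\ep_i$, flips the positions of the two blocks and toggles the signs, producing $\sum_i \lambda^\white_i \ep_i - \sum_j \lambda^\black_j \ep_{d-j+1}$. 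By \eqref{wt} this is exactly $wt((\lambda^\white, \lambda^\black)) = wt(\lambda^*)$.

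Combining the two steps, $(V_\lambda)^* = V(wt(\lambda))^* \cong V(-w_0\, wt(\lambda)) = V(wt(\lambda^*)) = V_{\lambda^*}$, which is the claim. The only subtle point is keeping the indexing straight in the coordinate reversal, but the hypothesis $l(\lambda) \leq d$ guarantees no overlap between the two blocks, so the computation is entirely routine and there is no real obstacle.
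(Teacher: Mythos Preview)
Your proof is correct and follows essentially the same route as the paper's own argument: both identify the highest weight of $(V_\lambda)^*$ as $-w_0\,wt(\lambda)$ (the paper derives this from scratch via the lowest weight and the $\Sigma_d$-action, while you invoke it as a standard fact) and then carry out the same coordinate-reversal computation to see that $-w_0\,wt(\lambda)=wt(\lambda^*)$. One small wording quibble: in your first step you say the highest weight of the dual is obtained by ``applying $-w_0$ to the lowest weight'', but what you actually do (and what is correct) is simply \emph{negate} the lowest weight $w_0\gamma$ to get $-w_0\gamma$; the $-w_0$ appears because the lowest weight already equals $w_0\gamma$, not because $w_0$ is applied again.
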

\begin{proof}
Duality defines an endofunctor of $\Rep(GL_d)$.
Thus $(V_\lambda)^*$ is indecomposable, so $(V_\lambda)^* \cong V_\mu$ for some bipartition $\mu$ with $l(\mu) \leq d$.
It follows immediately from \eqref{dualaction} that the dual of a weight space of $V_\lambda$ is a weight space of $(V_\lambda)^*$ with the weight negated.
Negation inverts the partial order on weights, so $wt(\mu)$ is the lowest weight of $V_\lambda$.

Recall that the symmetric group $\Sigma_d$ embeds in $GL_d$ as the permutation matrices.
Thus $\Sigma_d$ acts on $V_\lambda$ and hence on its set of weights.
If $\sigma$ denotes the longest element of $\Sigma_d$, and $\gamma$ is a weight, then $(\gamma^\sigma)_i = \gamma_{d-i+1}$ for $i=1,\ldots d$, and 
furthermore, $wt(\nu^*) = wt(\nu)^\sigma$ for any bipartition $\nu$.
Finally, $\sigma$ is an anti-involution of the poset of weights, and so $wt(\lambda)^\sigma = wt(\mu)$ is the lowest weight of $V_\lambda$, and so $\mu = \lambda^*$.
\end{proof}

\begin{theorem}\label{Koike}  (Compare with \cite[Theorem 2.4]{Koike})  Fix bipartitions $\lambda\vdash(r,s)$ and $\mu\vdash(r',s')$ such that $l(\lambda), l(\mu)\leq d$.  For each bipartition $\nu$ with $l(\nu)\leq d$ let $\Gamma^\nu_{\lambda,\mu}$ be such that 
\begin{equation*}
V_\lambda\otimes V_\mu=\bigoplus_{\nu}V_\nu^{\oplus \Gamma^\nu_{\lambda,\mu}}.
\end{equation*}  Then $\Gamma_{\lambda,\mu}^\nu=0$ unless the $|\nu|\leq(r+r', s+s')$.  Moreover, if $l(\lambda)+l(\mu)\leq d$ then
\begin{equation}\label{Gamma}
\Gamma^\nu_{\lambda,\mu}=\sum_{\alpha,\beta,\eta,\theta\in\cat{P}}\left(\sum_{\kappa\in\cat{P}}LR^{\lambda^\black}_{\kappa,\alpha}LR^{\mu^\white}_{\kappa,\beta}\right)\left(\sum_{\gamma\in\cat{P}}LR^{\lambda^\white}_{\gamma,\eta}LR^{\mu^\black}_{\gamma,\theta}\right)LR^{\nu^\black}_{\alpha,\theta}LR^{\nu^\white}_{\beta,\eta}
\end{equation} where $LR^\alpha_{\beta,\gamma}$'s are the Littlewood Richardson coefficients.  
\end{theorem}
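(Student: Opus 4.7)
The plan is to reduce this theorem to Koike's product rule \cite{Koike} after checking that the parametrizations match. The statement splits into two parts: the support claim $\Gamma^\nu_{\lambda,\mu}=0$ unless $|\nu|\leq(r+r',s+s')$, and the explicit Littlewood--Richardson expansion \eqref{Gamma} valid in the stable range $l(\lambda)+l(\mu)\leq d$.

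For the support claim, I would argue via mixed tensor powers and classical Schur--Weyl duality for $GL_d$ \cite{MR1280591,Stembridge1987,Koike,MR1024455}. The indecomposable summands of $T(p,q)=V^{\otimes p}\otimes (V^*)^{\otimes q}$ are precisely the $V_\xi$ with $\xi\vdash(p-i,q-i)$ for some $0\leq i\leq\min(p,q)$ and $l(\xi)\leq d$; in particular $|\xi|\leq(p,q)$. Since $V_\lambda$ occurs in $T(r,s)$ and $V_\mu$ in $T(r',s')$, the tensor product $V_\lambda\otimes V_\mu$ embeds as a summand of $T(r+r',s+s')$, whence every indecomposable summand $V_\nu$ occurring with positive multiplicity must itself satisfy $|\nu|\leq(r+r',s+s')$.

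For the explicit formula, I would invoke Koike's product rule \cite[Theorem 2.4]{Koike} directly. The combinatorial content is that the two inner sums (over $\kappa$ and $\gamma$) encode the contractions that arise when covariant and contravariant tensors are paired --- $\kappa$ capturing the interaction between $\lambda^\black$ and $\mu^\white$, and $\gamma$ that between $\lambda^\white$ and $\mu^\black$ --- while the outer Littlewood--Richardson coefficients $LR^{\nu^\black}_{\alpha,\theta}$ and $LR^{\nu^\white}_{\beta,\eta}$ reassemble the uncontracted pieces into the two halves of $\nu$. The hypothesis $l(\lambda)+l(\mu)\leq d$ is exactly Koike's stable range: it guarantees that the positive coordinates of $wt(\lambda)$ do not overlap with the negative coordinates of $wt(\mu)$, so that the product is governed by the universal character ring and no modification rules intervene.

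The main obstacle will be the dictionary between our conventions and Koike's --- specifically, confirming that the weight assignment $wt(\lambda)=\sum_i\lambda^\black_i\ep_i-\sum_j\lambda^\white_j\ep_{d-j+1}$ and the duality $V_\lambda^*=V_{\lambda^*}$ from Proposition \ref{dualwts} are compatible with Koike's labelling of rational $GL_d$-modules. Once this bookkeeping is in place, the formula \eqref{Gamma} is an immediate specialization of Koike's theorem, and the support claim requires nothing beyond the classical summand classification for mixed tensor powers recalled above.
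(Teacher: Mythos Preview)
Your proposal is correct and matches the paper's treatment: the paper does not give its own proof of this theorem but simply records it as a restatement of Koike's result \cite[Theorem 2.4]{Koike}, and your plan to invoke Koike directly after checking the dictionary is exactly right. One caution on logical order: your argument for the support claim appeals to the classification of indecomposable summands of $T(p,q)$, which in the paper's internal development is Theorem~\ref{Fdim} and is proved \emph{using} Theorem~\ref{Koike}; you avoid circularity only because you cite the external classical references \cite{MR1280591,Stembridge1987,Koike,MR1024455} for that classification rather than the paper's own result, so be sure to keep that dependence explicit.
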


\subsection{The functor $F_d:\uRep(GL_d)\to\Rep(GL_d)$}\label{functorFd}  
Write $F_d:\uRep(GL_d)\to\Rep(GL_d)$ for the tensor functor which sends $\black\mapsto V$ defined by Proposition \ref{uni} . 
By classical Schur-Weyl duality (see \cite{WeylClassical}), the $\K$-algebra map $\K \Sigma_p \to \End_{\Rep(GL_d)}(V^{\otimes p})$ defined by the symmetric braiding is surjective, for any $p \geq 0$.
Write $T(r,s)$ for the mixed tensor powers of $V$ as per section \S\ref{mtp}.
For any $\zeta \in \K$, the central element $\zeta \cdot \id \in GL_d$ acts on $T(r,s)$ by the scalar $\zeta^{r-s}$, so
$\Hom_{\Rep(GL_d)} (T(r,s), T(r',s') )= 0$ unless $r+s'=r'+s$.
Thus, by Theorem \ref{fullness}, the functor $F_d$ is full (this is the so-called First Fundamental Theorem of invariant theory) and any indecomposable summand of a mixed tensor powers $T(r,s)$ is isomorphic to $F_d (L(\lambda))$ for some bipartition $\lambda$.

Given a bipartition $\lambda$, we write $W(\lambda)=F_d(L(\lambda))$.  In this section we will completely describe $W(\lambda)$.  We start by assuming one of $\lambda^\black$, $\lambda^\white$ is $\emp$.

\begin{proposition}\label{leftW} Assume $\lambda\vdash(r,s)$ with $rs=0$.  If $l(\lambda)\leq d$, then $W(\lambda)=V_\lambda$.  If $l(\lambda)>d$, then $W(\lambda)=0$.
\end{proposition}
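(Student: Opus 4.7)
The plan is to reduce the two subcases $s=0$ and $r=0$ to classical Schur--Weyl duality, using Proposition \ref{Ldual} and Proposition \ref{dualwts} to pass between them.

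First, suppose $s=0$, so $\lambda = (\lambda^\black, \emp)$. By Example \ref{elamexs}(1), $e_\lambda = z_{\lambda^\black}^\black$ lies in $B_{r,0} \cong \KS_r$. Since the restriction of $F_d$ to $\uRep_0(GL_d)$ is a strict tensor functor sending $\black \mapsto V$ and the symmetric braiding to the tensor-swap on $V \otimes V$, the induced $\K$-algebra map $\KS_r \to \End(V^{\otimes r})$ is precisely the place-permutation action. Hence $W(\lambda)$ is the image of $z_{\lambda^\black}$ acting on $V^{\otimes r}$. By classical Schur--Weyl duality, $V^{\otimes r} \cong \bigoplus_{\alpha \vdash r,\ l(\alpha)\leq d} V_\alpha \otimes S^\alpha$ as a $(GL_d,\Sigma_r)$-bimodule (with $S^\alpha$ the Specht module), and the primitive idempotent $z_{\lambda^\black}$ projects onto a single copy of $V_{\lambda^\black}$ when $l(\lambda^\black)\leq d$ and onto $0$ otherwise. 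Since $wt(\lambda) = \lambda^\black$ in this case, $V_{\lambda^\black} = V_\lambda$, which settles the $s=0$ case.

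Now suppose $r=0$, so $\lambda = (\emp, \lambda^\white)$ and $\lambda^* = (\lambda^\white, \emp)$ falls under the previous case. Proposition \ref{Ldual} gives $L(\lambda) \cong L(\lambda^*)^*$, and because $F_d$ is a tensor functor it preserves duals, so $W(\lambda) \cong W(\lambda^*)^*$. The first case gives $W(\lambda^*) = V_{\lambda^*}$ when $l(\lambda)=l(\lambda^*)\leq d$ and $W(\lambda^*) = 0$ otherwise. Applying Proposition \ref{dualwts} yields $W(\lambda) \cong V_{\lambda^*}^* = V_\lambda$ in the former case and $W(\lambda) = 0$ in the latter, completing the argument.

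The only substantive input is the classical statement that $z_{\lambda^\black} V^{\otimes r}$ is a single copy of $V_{\lambda^\black}$ (or zero when $l(\lambda^\black) > d$), so the proof is essentially bookkeeping: correctly interpreting the diagrammatic idempotent $e_\lambda$ under $F_d$ and then invoking Schur--Weyl duality together with the duality compatibilities already established. I expect no serious obstacle.
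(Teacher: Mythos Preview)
Your argument is correct and follows essentially the same route as the paper's proof: first handle the case $s=0$ by identifying $F_d(e_\lambda)$ with the Young symmetrizer $z_{\lambda^\black}$ acting on $V^{\otimes r}$ (Weyl's construction), then deduce the case $r=0$ via Propositions~\ref{Ldual} and~\ref{dualwts}. The only difference is that you spell out the Schur--Weyl bimodule decomposition explicitly, whereas the paper simply cites Weyl's construction; note also that the paper's printed proof has the labels $r=0$ and $s=0$ swapped by a typo.
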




\begin{proof}  First, $\Sigma_r$ acts on $V^{\otimes r}$ by permuting tensors.  Since strict tensor functors preserve symmetric braidings, this action coincides with  $F_d:\KS_r\to\End(V^{\otimes r})$.   Hence if $r=0$ then, by Example \ref{elamexs}(1), $W(\lambda)$ is the image of the idempotent $z_{\lambda^\black}\in\KS_r$ acting on $V^{\otimes r}$.  This is precisely \emph{Weyl's construction} of $V_\lambda$ (see for example \cite{FH}).  The case $s=0$ follows from the case $r=0$ using  Propositions \ref{Ldual} and \ref{dualwts}.
\end{proof}

In fact, Proposition \ref{leftW} holds without the assumption that $rs=0$.

\begin{theorem}\label{Fdim} Suppose $\lambda$ is an arbitrary bipartition.  Then 
\begin{equation*}
W(\lambda)=\left\{\begin{array}{ll}
V_\lambda & \text{if }l(\lambda)\leq d,\\
0 & \text{if }l(\lambda)>d.
\end{array}\right.
\end{equation*}

\end{theorem}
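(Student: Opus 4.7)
The plan is to induct on $m := \min(|\lambda^\black|, |\lambda^\white|)$, with base case $m = 0$ supplied by Proposition \ref{leftW}. The engine of the induction will be to apply $F_d$ to the decomposition of Proposition \ref{zdecomp}, yielding
\begin{equation}\label{eq:plan-key}
W((\lambda^\black, \emp)) \otimes W((\emp, \lambda^\white)) \;=\; W(\lambda) \oplus \bigoplus_{j=1}^k W(\mu^{(j)}),
\end{equation}
where $|\mu^{(j)}| = (|\lambda^\black| - i_j, |\lambda^\white| - i_j)$ with $i_j \geq 1$, so each $\mu^{(j)}$ has strictly smaller $\min$ and the inductive hypothesis determines $W(\mu^{(j)})$.

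The easy case is when $l(\lambda^\black) > d$ or $l(\lambda^\white) > d$: Proposition \ref{leftW} makes the left side of \eqref{eq:plan-key} vanish, forcing $W(\lambda) = 0$, which matches the target since $l(\lambda) = l(\lambda^\black) + l(\lambda^\white) > d$. The substantive case is $l(\lambda^\black), l(\lambda^\white) \leq d$, in which Propositions \ref{leftW} and \ref{dualwts} combined with the inductive hypothesis rewrite \eqref{eq:plan-key} as
\begin{equation}\label{eq:plan-key2}
V_{\lambda^\black} \otimes V_{\lambda^\white}^* \;=\; W(\lambda) \oplus \bigoplus_{j \,:\, l(\mu^{(j)}) \leq d} V_{\mu^{(j)}}.
\end{equation}

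When additionally $l(\lambda) \leq d$, I will apply Koike's Theorem \ref{Koike} to the left side of \eqref{eq:plan-key2}. A short computation of $\Gamma^\nu_{(\lambda^\black, \emp), (\emp, \lambda^\white)}$ from the formula \eqref{Gamma}, using that $LR^\emp_{\gamma, \eta}$ is a Kronecker delta, will show that $\Gamma^\lambda = 1$ and that any other $\nu$ with $\Gamma^\nu \neq 0$ has $|\nu| = (|\lambda^\black| - k, |\lambda^\white| - k)$ with $k \geq 1$. Since $V_\lambda$ therefore does not occur in $\bigoplus_j V_{\mu^{(j)}}$, Krull--Schmidt in the semisimple category $\Rep(GL_d)$ together with the indecomposability of $W(\lambda)$ from Theorem \ref{fullness}(2) will force $W(\lambda) = V_\lambda$.

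The hard part is the remaining sub-case $l(\lambda^\black), l(\lambda^\white) \leq d < l(\lambda)$, in which $V_\lambda$ is not defined. Here I will argue by contradiction: if $W(\lambda) \neq 0$, Theorem \ref{fullness}(2) forces $W(\lambda) = V_\nu$ for some $\nu$ with $l(\nu) \leq d$, so $\nu \neq \lambda$. The first assertion of Koike's Theorem \ref{Koike}, applied to \eqref{eq:plan-key2}, forces $|\nu| \leq (|\lambda^\black|, |\lambda^\white|)$, and the central character of $GL_d$ acting on $W(\lambda) \subseteq T(|\lambda^\black|, |\lambda^\white|)$ forces $|\nu^\black| - |\nu^\white| = |\lambda^\black| - |\lambda^\white|$; together these give $|\nu| = (|\lambda^\black| - k, |\lambda^\white| - k)$ for some $k \geq 0$. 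If $k \geq 1$ the inductive hypothesis delivers $W(\nu) = V_\nu$; if $k = 0$ then $\nu$ is still at level $m$ but has $l(\nu) \leq d$, so the preceding sub-case, which I will settle for all bipartitions at level $m$ before tackling this one, delivers $W(\nu) = V_\nu$. Either way, $W(\lambda) = V_\nu = W(\nu)$ with $\lambda \neq \nu$ contradicts Proposition \ref{distinctind}. The delicate point is sequencing these two sub-cases within a single inductive level so that the instance of $W(\nu) = V_\nu$ used in the $k = 0$ branch is already in hand.
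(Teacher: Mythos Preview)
Your proof is correct and follows essentially the same approach as the paper's: both induct via the decomposition of Proposition \ref{zdecomp} applied under $F_d$, and split into cases according to $l(\lambda)$ versus $d$, using Koike's theorem together with Proposition \ref{distinctind} to finish. The differences are only tactical: the paper inducts on the partial order on $|\lambda|$ rather than on $\min(|\lambda^\black|,|\lambda^\white|)$, settles the case $l(\lambda)\le d$ by a direct highest-weight argument instead of computing $\Gamma^\nu$ from \eqref{Gamma}, and in the case $l(\lambda)>d$ invokes Koike to assert outright that every constituent of the left side has $|\nu|\lneqq|\lambda|$, thereby sidestepping your $k=0$ sub-case.
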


\begin{proof} We induct on the size of $\lambda$.  The base case $\lambda=\biemp$ is clear, so assume $\lambda\vdash(r,s)$ with $rs\not=0$.  By Example \ref{elamexs}(1) we have $z_\lambda=e_{(\lambda^\black,\emp)}\otimes e_{(\emp,\lambda^\white)}$ from which it follows \begin{equation}\label{Fz1}F_d(\im z_\lambda)=W((\lambda^\black,\emp))\otimes W((\emp,\lambda^\white)).\end{equation}
Also, by Proposition \ref{zdecomp} \begin{equation}\label{Fz2}F_d(\im z_\lambda)=W(\lambda)\oplus W(\mu^{(1)})\oplus\cdots\oplus W(\mu^{(k)})\end{equation} where $\mu^{(1)},\ldots,\mu^{(k)}$ are bipartitions whose sizes are strictly smaller than $(r,s)$.

Assume $l(\lambda)\leq d$.  Then by (\ref{Fz1}) and Proposition \ref{leftW}, $F_d(\im z_\lambda)$ has a highest weight vector with weight $wt(\lambda)$.  On the other hand, since $l(\lambda)\leq d$ there is no cancelation in (\ref{wt}).  Hence, by induction,  $W(\mu^{(j)})$ does not have a highest weight vector of weight $wt(\lambda)$ for any $j=1,\ldots, k$.  Thus, by (\ref{Fz2}), $W(\lambda)$ has a highest weight vector with weight $wt(\lambda)$.  By Proposition \ref{distinctind}, $W(\lambda)$ is simple and we are done.

Now assume $l(\lambda)>d$.  If either $l(\lambda^\black)$ or $l(\lambda^\white)$ is greater than $d$ we are done by Proposition \ref{leftW}, hence we can assume $l(\lambda^\black),l(\lambda^\white)\leq d$.  By Theorem \ref{Koike}, the highest weights in (\ref{Fz1}) are all of the form $wt(\nu)$ with $|\nu|\lneqq(r,s)$.  Thus, by induction, $F_d(z_\lambda)$ decomposes as a direct sum of $W(\nu)$'s with $|\nu|\lneqq(r,s)$.  Hence, by (\ref{Fz2}) and Proposition \ref{distinctind}, $W(\lambda)=0$.
\end{proof}


\section{The lifting map}\label{lift}  As before, we fix $\delta\in\K$.  Let $t$ be an indeterminate.   In this section we construct a ring isomorphism (called the \emph{lifting map}) between the Grothendieck rings of $\uRep(GL_\delta)$ and $\uRep(GL_t)$.  The definition of the lifting map is not an explicit one, however we show in \S\ref{computelift} that values of the lifting map can be computed using combinatorics of certain diagrams introduced by Brundan and Stroppel.  We will see later (\S\ref{tensordecomp}) that the lifting map will play a crucial role in our ability to decompose tensor products in $\uRep(GL_\delta)$.  We begin by fixing some notation for the Grothendieck rings mentioned above.

\subsection{The rings $R_\delta$ and $R_t$}\label{R}  Write $\K(t), \K[[t-\delta]]$, and $\K((t-\delta))$ for the field of fractions in $t$, ring of power series in $t-\delta$, and the field of Laurent series\footnote{In other words, $\K((t-\delta))$ is the field of fractions of $\K[[t-\delta]]$.} in $t-\delta$ respectively.   Then we have the following categories with their corresponding additive Grothendieck rings (see \S\ref{addGroth}):
$$\begin{array}{c|c|c}
\text{{\bf category}} & \begin{array}{c}{\bf additive }\\\text{{\bf Grothendieck ring}}\end{array} &\begin{array}{c}{\bf bilinear }\\\text{{\bf form}}\end{array} \\ \hline
\begin{array}{cl}
\uRep(GL_\delta) & \text{ over }\K \\
\uRep(GL_t) & \text{ over }\K(t)\\
\uRep(GL_t) & \text{ over }\K((t-\delta)) 
\end{array} &
\begin{array}{c}
R_\delta\\
 R_t\\
 R_{t, \delta}
 \end{array} & \begin{array}{c}
(-,-)_\delta\\
 (-,-)_t\\
 ~\hspace{.1in}~(-,-)_{t, \delta}
 \end{array}
\end{array}$$
By Theorem \ref{indeclass} we can identify the elements of $R_\delta, R_t,$ and $R_{t,\delta}$ with formal $\Z$-linear combinations of bipartitions, and we will do so for the rest of the paper.  In particular, the rings $R_\delta, R_t,$ and $R_{t,\delta}$ are clearly isomorphic as abelian groups.  However, the multiplication in these rings depends on the parameter:  

\begin{example}\label{boxempempbox} It is always true that $e_{(\Box,\emp)}=\id_\black$ and $e_{(\emp,\Box)}=\id_\white$, which implies $L((\Box,\emp))=\black$ and $L((\emp,\Box))=\white$.  Hence $L((\Box,\emp))\otimes L((\emp,\Box))=\black\white$ always.  Thus, it follows from example \ref{elamexs}(2) that $$(\Box,\emp)(\emp,\Box)=(\Box,\Box)\in R_0,\quad\text{whereas}\quad(\Box,\emp)(\emp,\Box)=(\Box,\Box)+\biemp\in R_t.$$
\end{example}

The next proposition shows that although the multiplication of bipartitions in $R_\delta$ and $R_t$ may differ, the rings $R_t$ and $R_{t,\delta}$ can be identified regardless of $\delta$.

\begin{proposition}\label{Rts} (1) The $\Z$-linear map $R_t\to R_{t,\delta}$ with $\lambda\mapsto\lambda$ for each bipartition $\lambda$ is a ring isomorphism. 

(2) $(\lambda,\mu)_t=(\lambda,\mu)_{t,\delta}=\left\{\begin{array}{ll} 1 & \text{if }\lambda=\mu,\\ 0 & \text{if } \lambda\not=\mu.\end{array}\right.$ 
\end{proposition}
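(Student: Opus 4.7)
The plan is to deduce both parts from the semisimplicity of $\uRep(GL_t)$ over both base fields, combined with the absolute primitivity of the idempotents $e_\lambda$. Since $\K$ has characteristic zero, it contains $\Q$, so the indeterminate $t$ is transcendental over $\K$ and hence not an integer in either $\K(t)$ or $\K((t-\delta))$. Theorem \ref{ss} therefore shows that $\uRep(GL_t)$ is semisimple over both base fields, with indecomposable (hence simple) objects given, by Theorem \ref{indeclass}, by the family $\{L(\lambda)\}$ indexed by bipartitions $\lambda$.

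Part (2) will then follow immediately: distinct simples in a semisimple $\K$-linear Krull-Schmidt category admit no nonzero morphisms between them, and $\End(L(\lambda))$ is a finite-dimensional division algebra over the base field. By Corollary \ref{absolute}, $e_\lambda$ is absolutely primitive, so this division algebra admits no nontrivial idempotents after any field extension, forcing $\End(L(\lambda))$ to equal the base field itself. Hence both bilinear forms take value $1$ on the diagonal and $0$ off-diagonal.

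For part (1), I will use a base-change argument. The inclusion $\K(t) \hookrightarrow \K((t-\delta))$ extends the scalars of the skeleton category $\uRep_0(GL_t)$, since morphism spaces carry the common diagrammatic basis of walled Brauer diagrams and composition is expressed by a formula involving only $t$. By the universal properties \eqref{additiveenvelopeup2} and \eqref{karoubienvelopeup2}, this extends to a $\K(t)$-linear tensor functor $F : \uRep(GL_t) \to \uRep(GL_t)$ from the $\K(t)$-version to the $\K((t-\delta))$-version. Corollary \ref{absolute} ensures $F$ sends $L(\lambda)$ to $L(\lambda)$ for every bipartition $\lambda$. In the semisimple case the tensor-product structure constants $c^\nu_{\lambda,\mu}$ are computed as $c^\nu_{\lambda,\mu} = \dim \Hom(L(\nu), L(\lambda)\otimes L(\mu))$, and these Hom-space dimensions are preserved by extension of scalars; thus $F$ respects multiplication, and the induced $\Z$-linear bijection $\lambda \mapsto \lambda$ on Grothendieck rings is a ring isomorphism.

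The only real subtlety is verifying that no $L(\lambda)$ decomposes further when $\K(t)$ is enlarged to $\K((t-\delta))$; this is exactly the content of absolute primitivity (Corollary \ref{absolute}), after which both parts follow formally from semisimplicity.
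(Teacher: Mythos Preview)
Your proof is correct and follows essentially the same approach as the paper: both arguments rest on the two pillars of absolute primitivity (Corollary \ref{absolute}) and generic semisimplicity (Theorem \ref{ss}). The paper's proof of part (1) phrases the argument directly in terms of an orthogonal primitive idempotent decomposition of $e_\lambda\otimes e_\mu$ over $\K(t)$, which remains primitive over $\K((t-\delta))$ by Corollary \ref{absolute}; your functorial base-change packaging and computation of structure constants via $\dim\Hom(L(\nu),L(\lambda)\otimes L(\mu))$ is an equivalent reformulation of the same idea, and for part (2) the two arguments are virtually identical.
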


\begin{proof}  (1) Suppose \begin{equation}\label{decint}e_\lambda\otimes e_\mu=e_1+\cdots+e_k\end{equation} is a decomposition of $e_\lambda\otimes e_\mu$ into mutually orthogonal primitive idempotents over $\K(t)$.  Then  $\lambda\mu=\sum_\nu a_\nu\nu\in R_t$ where $a_\nu$ is the number of summands in (\ref{decint}) correspond to the bipartition $\nu$.  By Corollary \ref{absolute}, viewing (\ref{decint})  over the larger field $\K((t-\delta))\supset\K(t)$ still gives an orthogonal decomposition of $e_\lambda\otimes e_\mu$ into primitive idempotents, hence $\lambda\mu=\sum_\nu a_\nu\nu\in R_{t,\delta}$ too.  

(2) By Corollary \ref{absolute}, we can work over the algebraic closure of $\K(t)$ (resp. $\K((t-\delta))$ to compute $(\lambda,\mu)_t$ (resp. $(\lambda,\mu)_{t,\delta}$).  The result now follows from the fact that $\uRep(GL_t)$ is semisimple over any field containing the indeterminate $t$ (see Theorem \ref{ss}).
\end{proof}

With Proposition \ref{Rts} in mind, for the rest of the paper we will identify $R_{t,\delta}$ with $R_t$ for every $\delta$ and  write $R_t$ for both.

\subsection{The ring map $\lift_\delta:R_\delta\to R_t$}  Fix a bipartition $\lambda\vdash(r,s)$ and consider the idempotent $e_\lambda\in\K B_{r,s}(\delta)$.  We can lift $e_\lambda$ to an idempotent $\tilde{e}\in \K((t-\delta))B_{r,s}(t)$, i.e. $\tilde{e}$ is of the form $\tilde{e}=\sum_{X}a_XX$ with $a_X\in\K[[t-\delta]]$ for all $(w_{r,s},w_{r,s})$-diagrams $X$, and $\tilde{e}|_{t=\delta}=e$ (see \cite[Theorem A.2]{CO1}).  Now, given another bipartition $\mu$, let $D_{\lambda,\mu}=D_{\lambda,\mu}(\delta)$ denote the number of times $L(\mu)$ occurs in a decomposition of $\im(\tilde{e})$ into a direct sum of  indecomposables in $\uRep(GL_t)$ over $\K((t-\delta))$.  One can show that $D_{\lambda,\mu}$ does not depend on the choice of representative for $e_\lambda$ or on the choice of $\tilde{e}$ (compare with  \cite[Theorem 3.9]{CO1}).  Now, let $\lift_\delta:R_\delta\to R_t$ be the $\Z$-linear map defined on bipartitions by $$\lift_\delta(\lambda)=\sum_\mu D_{\lambda,\mu}\mu.$$

\begin{example}\label{liftemp}    If $\lambda\vdash(r,0)$ then, by Example \ref{elamexs}(1), $e_\lambda=z_{\lambda^\black}^\black\in\K B_{r,0}(\delta)$.  Since $z_{\lambda^\black}^\black$ does not depend on $\delta$, it can be lifted to $z_{\lambda^\black}^\black=e_\lambda\in\K((t))B_{r,0}(t)$.  Hence, $\lift_\delta(\lambda)=\lambda$ for all $\lambda\vdash(r,0)$, $\delta\in\K$.  Similarly, $\lift_\delta(\lambda)=\lambda$ whenever $\lambda\vdash(0,s)$.  
\end{example}

\begin{example}\label{liftex}  

(1) Assume $\delta=0$.  By Example \ref{elamexs}(2), $e_\bibox=\id_{\black\white}\in\K B_{1,1}(0)$ which lifts to $\id_{\black\white}\in\K((t))B_{1,1}(t)$.  By Example \ref{zboxdec}(1), $\black\white=L(\bibox)\oplus L(\biemp)$ in $\uRep(GL_t)$. Thus $\lift_0(\bibox)=\bibox+\biemp$.

(2) Assume $\delta\not=0$.  An explicit expression for $e_\bibox\in\K B_{1,1}(\delta)$ is given in Example \ref{elamexs}(2).  Since $\frac{1}{t}=\sum_{n=0}^\infty\frac{(-1)^n}{\delta^{n+1}}(t-\delta)^n\in\K[[t-\delta]]$, a lift of $e_\bibox$ is obtained by replacing $\delta$ with $t$ in that expression.  Hence, $\lift_\delta(\bibox)=\bibox$.
\end{example}

The following theorem lists properties of $\lift_\delta$ which are very useful for this paper.

\begin{theorem}\label{liftprops} (1) $\lift_\delta:R_\delta\to R_t$ is a ring isomorphism for every $\delta\in\K$.

(2) $D_{\lambda,\lambda}=1$ for all $\lambda$.  Moreover,  $D_{\lambda,\mu}=0$ unless $\mu=\lambda$ or  $\mu\vdash(|\lambda^\black|-i,|\lambda^\white|-i)$ for some $i>0$.

(3) Fix a bipartition $\lambda$.  $\lift_\delta(\lambda)=\lambda$ for all but finitely many $\delta\in\K$.

(4)  $(\lift_\delta(x), \lift_\delta(y))_t=(x,y)_\delta$ for all $x,y\in R_\delta$.  

\end{theorem}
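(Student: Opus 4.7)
My strategy is to prove the four parts in the order (2), (1), (4), (3), since (2) does the heavy lifting and the others follow from it.  For (2), I will fix $\lambda \vdash (r,s)$, choose a lift $\tilde e$ of $e_\lambda$ in $\K((t-\delta))B_{r,s}(t)$, and decompose $\tilde e = \sum_i \tilde e_i$ into orthogonal primitive idempotents.  By Corollary \ref{absolute} and Theorem \ref{idempclass}, each $\tilde e_i$ will be conjugate to some $e_{\mu_i}^{(j_i)}$ with $\mu_i \vdash (r-j_i, s-j_i)$, which already yields the size constraint in (2).  To pin down $D_{\lambda,\lambda} = 1$, I will push the decomposition through the projection $\pi : B_{r,s} \onto \K[\Sigma_r \times \Sigma_s]$.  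Since $e_\mu^{(j)}$ for $j > 0$ lies in the two-sided ideal $J$ of diagrams with fewer than $r+s$ propagating edges, $\pi(e_\mu^{(j)}) = 0$ for $j > 0$, whereas $\pi(e_\mu) = z_\mu$ by the very definition of $e_\lambda$.  Hence $\pi(\tilde e)$ reduces modulo $(t-\delta)$ to the absolutely primitive idempotent $z_\lambda$; since reduction preserves the rank of an idempotent in each matrix block of the split semisimple algebra $\K((t-\delta))[\Sigma_r \times \Sigma_s]$, $\pi(\tilde e)$ is itself primitive and conjugate to $z_\lambda$.  Thus exactly one $\tilde e_i$ has $j_i = 0$, and that summand must satisfy $\mu_i = \lambda$.

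For (1), I will first deduce from (2) that the matrix of $\lift_\delta$ in the bipartition basis is unitriangular with respect to the partial order by size, hence a $\Z$-module isomorphism.  For multiplicativity, the key step is compatibility of lifts: given an orthogonal decomposition $e_\lambda \otimes e_\mu = \sum_\eta e'_\eta$ into primitive idempotents of the appropriate walled Brauer algebra, standard idempotent-lifting theory over the complete local ring $\K[[t-\delta]]$ supplies a compatible orthogonal lift $\tilde e_\lambda \otimes \tilde e_\mu = \sum_\eta \tilde e'_\eta$.  Computing the class $[\im(\tilde e_\lambda \otimes \tilde e_\mu)]$ in $R_t$ in two ways --- as $\lift_\delta(\lambda)\lift_\delta(\mu)$ via $\im(\tilde e_\lambda \otimes \tilde e_\mu) = \im \tilde e_\lambda \otimes \im \tilde e_\mu$, and as $\lift_\delta(\lambda \mu)$ via the compatible lift --- then yields the ring-homomorphism property; together with the obvious identity $\lift_\delta(\biemp) = \biemp$, this finishes (1).

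For (4), I will use the elementary fact that lifting idempotents preserves the dimension of $e \Lambda e'$, so that $\dim_\K(e_\mu B_{r,s}(\delta) e_\lambda) = \dim_{\K((t-\delta))}(\tilde e_\mu B_{r,s}(t) \tilde e_\lambda)$.  Expanding the right-hand side via (2) and the orthonormality in Proposition \ref{Rts}(2) yields $\sum_\nu D_{\lambda,\nu}D_{\mu,\nu} = (\lift_\delta(\lambda),\lift_\delta(\mu))_t$, while the left-hand side is $(\lambda,\mu)_\delta$ by definition.  Part (3) will then follow by specialising (4) at $\mu = \lambda$ to those $\delta$ for which $B_{r,s}(\delta)$ is semisimple --- which holds for all but finitely many $\delta$, by the precise criterion of \cite[Theorem 6.3]{CDDM}.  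In that case $(\lambda,\lambda)_\delta = 1$ by the absolute primitivity of $e_\lambda$, so the non-negative integers $D_{\lambda,\nu}$ satisfy $\sum_\nu D_{\lambda,\nu}^2 = 1$ with $D_{\lambda,\lambda} \geq 1$, forcing $\lift_\delta(\lambda) = \lambda$.

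The main obstacle is the step in (2) where I need $\pi(\tilde e)$ to be a primitive idempotent in $\K((t-\delta))[\Sigma_r \times \Sigma_s]$, given only that its reduction modulo $(t-\delta)$ is the primitive idempotent $z_\lambda$.  Once the algebra is decomposed into its matrix blocks, this reduces to the standard fact that the rank of an idempotent matrix over $\K((t-\delta))$ equals the rank of its reduction over $\K$, and the rest of the argument is essentially bookkeeping built on this and on the absolute primitivity results of \S\ref{indecomposables}.
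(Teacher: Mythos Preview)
The paper does not give its own proof of this theorem; it simply refers the reader to the analogous argument for $\uRep(S_t)$ in \cite[Proposition 3.12]{CO1}.  Your plan is correct and is precisely in the spirit of that reference, adapted to the walled Brauer setting: the $\pi$-argument for (2), the unitriangularity plus compatible idempotent-lifting for (1), the trace/rank invariance for (4), and the semisimplicity criterion of \cite{CDDM} for (3) are all the right ingredients.

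Two small points worth tightening.  First, in your multiplicativity argument for (1) you implicitly use a stronger form of well-definedness than the paper states: you need that lifting \emph{any} primitive idempotent $e'_\eta$ with $\im e'_\eta \cong L(\nu_\eta)$ --- possibly living in a larger walled Brauer algebra than $B_{|\nu_\eta^\black|,|\nu_\eta^\white|}$ --- still yields $\lift_\delta(\nu_\eta)$.  This is true, but it requires combining the stated well-definedness with Proposition \ref{isomlambdas} (the explicit isomorphisms $\im e_\nu^{(k)} \cong \im e_\nu$ there are built from $\delta$-independent diagram morphisms, so they lift along with the idempotents).  Second, in (2) you should note that for $j_i = 0$ the image $\pi(\tilde e_i)$ is not just nonzero but is conjugate to $z_{\mu_i}$ (since $\pi$ is an algebra map and $\pi(e_{\mu_i}) = z_{\mu_i}$ over $\K(t)$); this is what forces $\mu_i = \lambda$ once you know $\pi(\tilde e)$ is primitive conjugate to $z_\lambda$.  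Your rank-via-trace argument for the primitivity of $\pi(\tilde e)$ over $\K((t-\delta))$ is exactly right, using that $\K[\Sigma_r \times \Sigma_s]$ is split semisimple in characteristic zero.
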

 
For a proof of Theorem \ref{liftprops} we refer the reader to \cite[Proposition 3.12]{CO1} where the analogous statements are proved for $\uRep(S_t)$.

An important consequence of Theorem \ref{liftprops} is the following:

\begin{corollary}\label{deltaform} $(\lambda, \mu)_\delta=\sum_\nu D_{\lambda,\nu}D_{\mu,\nu}$ for all bipartitions $\lambda$ and $\mu$.
\end{corollary}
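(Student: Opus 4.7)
The plan is to derive the identity by a direct computation that combines two previously stated results: Theorem \ref{liftprops}(4), which says the lifting map is an isometry (i.e.\ $(\lift_\delta(x), \lift_\delta(y))_t = (x,y)_\delta$), and Proposition \ref{Rts}(2), which says the bilinear form $(-,-)_t$ on $R_t$ is orthonormal on the basis of bipartitions.

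First I would apply the isometry property to the pair $(\lambda, \mu)$ of bipartitions, obtaining
\[ (\lambda, \mu)_\delta = (\lift_\delta(\lambda), \lift_\delta(\mu))_t. \]
Next I would expand each side of the right-hand pairing using the definition $\lift_\delta(\lambda) = \sum_\nu D_{\lambda, \nu} \nu$ and $\lift_\delta(\mu) = \sum_{\nu'} D_{\mu, \nu'} \nu'$, and use bilinearity of $(-,-)_t$ to pull the decomposition numbers out:
\[ (\lift_\delta(\lambda), \lift_\delta(\mu))_t = \sum_{\nu, \nu'} D_{\lambda, \nu} D_{\mu, \nu'} (\nu, \nu')_t. \]
Finally, Proposition \ref{Rts}(2) collapses the double sum to the diagonal, giving
\[ \sum_{\nu, \nu'} D_{\lambda, \nu} D_{\mu, \nu'} (\nu, \nu')_t = \sum_\nu D_{\lambda, \nu} D_{\mu, \nu}, \]
as desired.

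There is no real obstacle here; the corollary is essentially a formal consequence of the isometry statement in Theorem \ref{liftprops}(4) combined with the orthonormality of the bipartition basis in $R_t$. The only thing worth noting is that the sum on the right is finite, which is guaranteed by Theorem \ref{liftprops}(2): $D_{\lambda, \nu}$ and $D_{\mu, \nu}$ can be simultaneously nonzero only for finitely many $\nu$ (in fact only those $\nu$ of the appropriate size constrained by both $\lambda$ and $\mu$).
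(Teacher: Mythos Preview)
Your proof is correct and follows exactly the same approach as the paper's own proof: apply Theorem~\ref{liftprops}(4) to rewrite $(\lambda,\mu)_\delta$ as $(\lift_\delta(\lambda),\lift_\delta(\mu))_t$, expand via bilinearity, and collapse the double sum using Proposition~\ref{Rts}(2). Your additional remark about finiteness of the sum (via Theorem~\ref{liftprops}(2)) is a helpful observation not made explicit in the paper.
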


\begin{proof} $$\begin{array}{rll}
(\lambda,\mu)_\delta & =  (\sum_\nu D_{\lambda,\nu}\nu~,~ \sum_{\nu'} D_{\mu,\nu'}\nu')_t & (\text{Theorem \ref{liftprops}(4)})\\[2pt]
& = \sum_{\nu,\nu'}D_{\lambda, \nu}D_{\mu,\nu'}(\nu,\nu')_t\\[2pt]
& = \sum_{\nu}D_{\lambda, \nu}D_{\mu,\nu} & (\text{Proposition \ref{Rts}(2)})
\end{array}$$
\end{proof}

\subsection{The diagrams of Brundan and Stroppel}\label{BS}  We will soon show that $\lift_\delta(\lambda)$ can be computed explicitly using certain diagrams introduced by Brundan and Stroppel [BS2-5]\nocite{BS1,BS2,BS3,BS4}.  In this subsection we introduce these diagrams and give a few examples.

As usual, we fix $\delta\in\K$.  Given a bipartition $\lambda$, set $$\begin{array}{rl}
I_\up(\lambda) & =\{\lambda^\black_1, \lambda^\black_2-1, \lambda^\black_3-2, \ldots\},\\
I_\down(\lambda,\delta) & =\{1-\delta-\lambda^\white_1, 2-\delta-\lambda^\white_2, 3-\delta-\lambda^\white_3, \ldots\}.
\end{array}$$  Now, let $x_\lambda=x_\lambda(\delta)$ be the diagram obtained by labeling the integer vertices on the number line according to the following rule:  label the the $i$th vertex by $$\left\{\begin{array}{cl}
\bigo & \text{ if }i\not\in I_\up(\lambda)\cup I_\down(\lambda,\delta),\\
\up & \text{ if }i\in I_\up(\lambda)\setminus I_\down(\lambda,\delta),\\
\down & \text{ if }i\in I_\down(\lambda,\delta)\setminus I_\up(\lambda),\\
\cross & \text{ if }i\in I_\up(\lambda)\cap I_\down(\lambda,\delta).
\end{array}\right.$$  
For example, $$\begin{array}{r}
\includegraphics{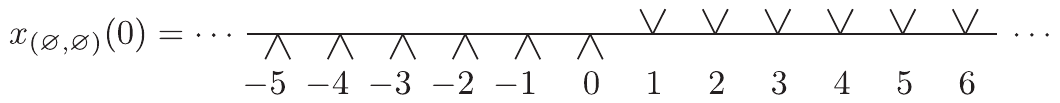},\\
\includegraphics{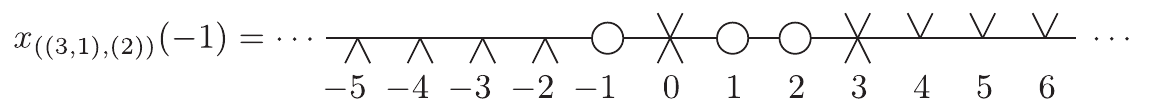},\\
\includegraphics{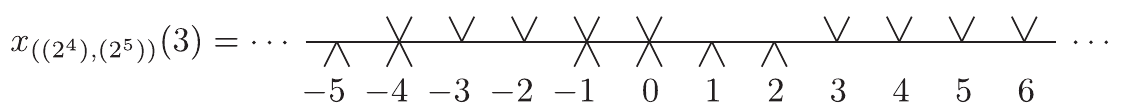},\\
\includegraphics{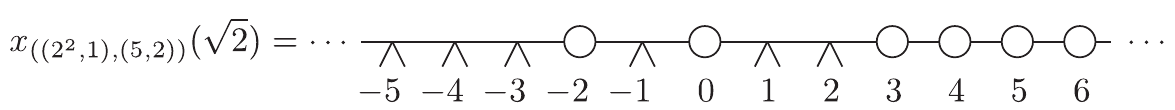}.
\end{array}$$

\begin{remark}\label{endsofx}  Notice that the integer $i$ in $x_\lambda$ is labelled $\up$ for $i\ll0$. Moreover, if $\delta \in \Z$ (resp. $\delta\not \in \Z$) then $i$ is 
labelled by $\down$ (resp. $\bigo$) for $i\gg 0$. 
In fact, it is not difficult to show that when $\delta\in\Z$ there is a bijection between the set of all bipartitions and the set of all diagrams with (1) $i$ labelled $\up$ for $i\ll0$; (2) $i$ labelled $\down$ for $i\gg0$; and (3) the number of $\cross$'s minus the number of $\bigo$'s equal to $\delta$.  
\end{remark}

Next, we construct the \emph{cap diagram} $c_\lambda=c_\lambda(\delta)$ in the following recursive manner:

\begin{enumerate}{\setlength\itemindent{0.35in} \item[\emph{Step 0:}]  Start with $x_\lambda$.

\item[\emph{Step $n$:}] Draw a cap connecting vertices $i$ and $j$ on the number line whenever (i) $i<j$; (ii) $i$ is labelled by $\down$ and $j$ is labelled by $\up$ in $x_\lambda$; and (iii) each integer between $i$ and $j$ in $x_\lambda$ is either labelled by $\bigo$, labelled by $\cross$, or already part of a cap from an earlier step.}
\end{enumerate}
It follows from Remark \ref{endsofx} that no new caps will be added after a finite number of steps, leaving us with the cap diagram $c_\lambda$.

\begin{example}\label{capex}  If $\delta=1$ and $\lambda=((5^2,4^2,3^2), (5^3,4,3,2))$ then $$\includegraphics{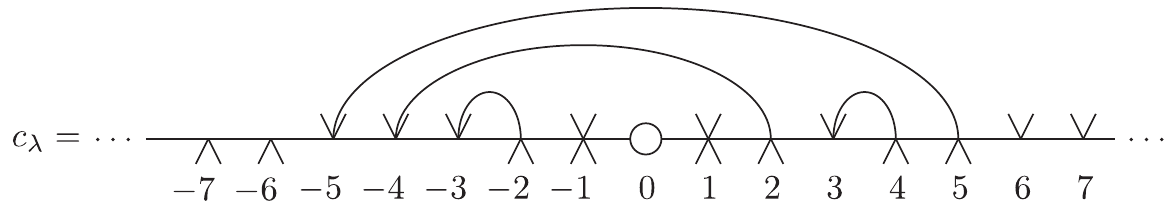}$$
\end{example}

Given integers  $i< j$, we say $(i, j)$ is a \emph{$\down\up$-pair in $x_\lambda$} if there is a cap from $j$ to $i$ in $c_\lambda$.  For instance, in Example \ref{capex} there are four $\down\up$-pairs: $(-5, 5), (-4,2), (-3,-2)$, and $(3,4)$.  Next, given bipartitions $\mu$ and $\lambda$, we say that \emph{$\mu$ is linked to $\lambda$} if there exists an integer $k\geq 0$ and bipartitions $\nu^{(n)}$ for $0\leq n\leq k$ such that  (i) $\nu^{(0)}=\lambda$, (ii) $\nu^{(k)}=\mu$, and (iii) $x_{\nu^{(n)}}$ is obtained from $x_{\nu^{(n-1)}}$ by swapping the labels of some $\down\up$-pair in $x_{\lambda}$ whenever $0< n\leq k$.  Finally, set $$D_{\lambda,\mu}'=D_{\lambda,\mu}'(\delta)=\left\{\begin{array}{ll}
1 & \text{if $\mu$ is linked to $\lambda$},\\
0 & \text{otherwise}.
\end{array}
\right.$$

\begin{remark} It is shown in \cite{CD} that $D'_{\lambda,\mu}$ give decomposition numbers for walled Brauer algebras.  This is easy to see when $\delta\not\in\Z$.  Indeed, when $\delta\not\in\Z$ there are no $\down$ labels on $x_\lambda$, so there are no $\down\up$-pairs; hence $D'_{\lambda,\mu}\not=0$ if and only if $\lambda=\mu$. \end{remark}


\begin{example}\label{exDprime}   Fix $\delta=-1$.  In this example we will compute the numbers $D'_{\lambda,\mu}$ where $\lambda=((3,2),(3,1))$ and $\mu$ is arbitrary.  $$\includegraphics{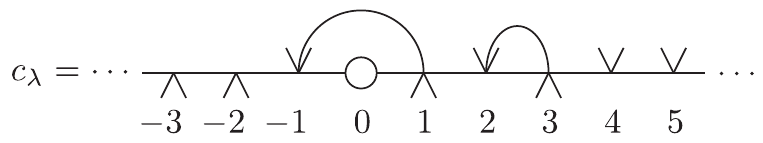}$$
Now we swap labels on  $\down\up$-pairs in $x_{\lambda}$ to determine which bipartitions $\mu$ are linked to $\lambda$.  The following table lists our results:
$$\begin{array}{|c||c||c|}\hline
\begin{array}{c}\down\up\text{-pairs}\\ \text{swapped}\end{array} & x_\mu & \mu\\ \hline\hline
\text{none} & \includegraphics{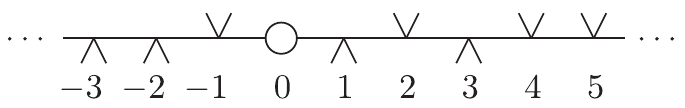} & ((3,2),(3,1))\\\hline\hline
(-1,1) &  \includegraphics{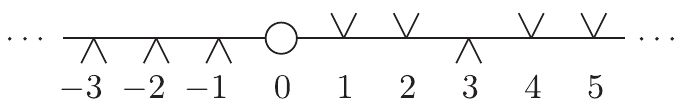} & ((3), (1^2))\\\hline\hline
(2,3) & \includegraphics{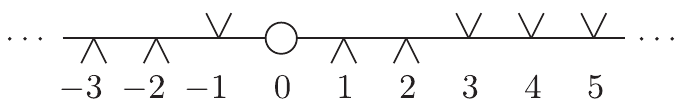} & ((2^2), (3))\\\hline\hline
\text{both} & \includegraphics{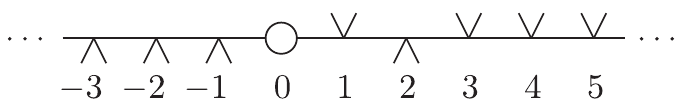} & ((2), (1))\\\hline\hline
\end{array}$$
Hence $D'_{\lambda,\mu}=1$ when $\mu$ is one of the four bipartitions listed in the table above, and $D'_{\lambda,\mu}=0$ for all other $\mu$.  
\end{example}

\begin{example}\label{biboxDprime}  In this example we compute $D_{\bibox,\mu}'(\delta)$ for all bipartitions $\mu$ and all $\delta\in\K$.  Since $I_\up(\bibox)=\{1,-1,-2,\ldots\}$, the diagram $x_\bibox(\delta)$ has a $\down\up$-pair if and only if $0\in I_\down(\bibox,\delta)$ and $1\not\in I_\down(\bibox,\delta)$, which occurs if and only if $\delta=0$ since $I_\down(\bibox,\delta)=\{-\delta,2-\delta,3-\delta,\ldots\}$.  Hence, when $\delta\not=0$ we have $$D'_{\bibox,\mu}(\delta)=\left\{\begin{array}{ll}
1 & \text{if }\mu=\bibox,\\
0 & \text{otherwise}.
\end{array}\right.$$  On the other hand, $$\includegraphics{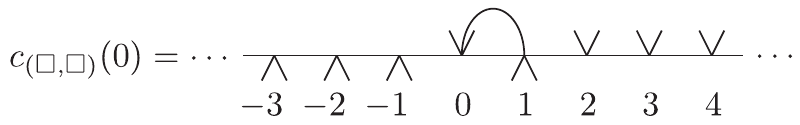}$$  Swapping the $\down\up$-pair $(0,1)$ in $x_\bibox(0)$ gives $x_\biemp(0)$.  Hence $$D'_{\bibox,\mu}(0)=\left\{\begin{array}{ll}
1 & \text{if }\mu=\bibox\text{ or }\mu=\biemp,\\
0 & \text{otherwise}.
\end{array}\right.$$  In particular, $D'_{\bibox,\mu}=D_{\bibox,\mu}$ for all $\mu$ regardless of $\delta$ (see Example \ref{liftex}).  In \S\ref{computelift} we will show $D'_{\lambda,\mu}=D_{\lambda,\mu}$ always (see Corollary \ref{D}).
\end{example}

The following proposition describes how swapping the labels on $\down\up$-pairs affects the size of the corresponding bipartitions.  

\begin{proposition}\label{swapsize}  Suppose $\lambda$ and $\mu$ are bipartitions and $(i,j)$ is a $\down\up$-pair in $x_\lambda$.  If $x_\mu$ is obtained from $x_\lambda$  by swapping the labels of $i$ and $j$, then $$|\mu|=(|\lambda^\black|+i-j,|\lambda^\white|+i-j).$$  
\end{proposition}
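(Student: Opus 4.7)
The plan is to translate the diagrammatic swap into a precise set-theoretic change in $I_\up$ and $I_\down$, and then invoke a ``charged partition'' formula that recovers $|\alpha^\black|$ and $|\alpha^\white|$ from these sets. Unpacking the definition, since $(i,j)$ is a $\down\up$-pair in $x_\lambda$, vertex $i$ carries the label $\down$ and vertex $j$ carries the label $\up$; that is, $i\in I_\down(\lambda,\delta)\setminus I_\up(\lambda)$ and $j\in I_\up(\lambda)\setminus I_\down(\lambda,\delta)$. The swap exchanges these two labels and leaves every other label fixed, and the label at a position $k$ is determined by the pair (membership in $I_\up$, membership in $I_\down$). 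It follows that
$$I_\up(\mu)=\bigl(I_\up(\lambda)\setminus\{j\}\bigr)\cup\{i\},\qquad I_\down(\mu,\delta)=\bigl(I_\down(\lambda,\delta)\setminus\{i\}\bigr)\cup\{j\}.$$

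Next I would establish the key size formula. Set $E_\up=I_\up(\emp)=\{0,-1,-2,\ldots\}$ and $E_\down=I_\down(\emp,\delta)=\{1-\delta,2-\delta,\ldots\}$. For any partition $\alpha$, list $I_\up(\alpha)=\{a_k\}_{k\geq 1}$ in decreasing order; then $a_k=\alpha_k-(k-1)$, and $a_k=1-k$ for $k>l(\alpha)$, so $I_\up(\alpha)\triangle E_\up$ is finite. Telescoping gives
$$|\alpha|=\sum_{k\geq 1}\alpha_k=\sum_{k\geq 1}\bigl(a_k-(1-k)\bigr)=\sum_{n\in\Z} n\bigl(\chi_{I_\up(\alpha)}(n)-\chi_{E_\up}(n)\bigr),$$
a well-defined finite sum. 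The analogous formula for the white part uses $b_k=k-\delta-\alpha_k$ for the elements of $I_\down(\alpha,\delta)$ in increasing order, giving
$$|\alpha^\white|=-\sum_{n\in\Z} n\bigl(\chi_{I_\down(\alpha,\delta)}(n)-\chi_{E_\down}(n)\bigr).$$

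Finally, from the set-theoretic description of $I_\up(\mu)$ and $I_\down(\mu,\delta)$ above,
$$\chi_{I_\up(\mu)}-\chi_{I_\up(\lambda)}=\chi_{\{i\}}-\chi_{\{j\}},\qquad \chi_{I_\down(\mu,\delta)}-\chi_{I_\down(\lambda,\delta)}=\chi_{\{j\}}-\chi_{\{i\}}.$$
Pairing against $n$ in the size formulas above yields $|\mu^\black|-|\lambda^\black|=i-j$ and $|\mu^\white|-|\lambda^\white|=-(j-i)=i-j$, which is exactly the claim. I do not expect any serious obstacle; the only care required is in verifying that the size identity is a finite sum, which is immediate from the finiteness of the symmetric differences $I_\up(\alpha)\triangle E_\up$ and $I_\down(\alpha,\delta)\triangle E_\down$. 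The rest is bookkeeping with characteristic functions.
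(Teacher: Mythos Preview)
Your proof is correct. Both your argument and the paper's rest on the same underlying observation, namely that the swap replaces $j$ by $i$ in $I_\up$ and $i$ by $j$ in $I_\down$, and that the size of a partition can be read off from these sets. The difference is one of packaging. The paper orders the elements of $I_\up(\lambda)$ and $I_\down(\lambda,\delta)$ as sequences $(a_k)$ and $(b_k)$, locates the positions $M,L$ where $a_M=j$ and $b_L=i$, tracks explicitly how the removal of $j$ and insertion of $i$ reindex the sequence (the intermediate terms shift by one position, and $i$ lands at some new index $N$), and then computes $|\mu^\bullet|$ as a telescoping sum. Your approach avoids this index bookkeeping entirely: once you express $|\alpha|=\sum_{n}n\bigl(\chi_{I_\up(\alpha)}(n)-\chi_{E_\up}(n)\bigr)$, the difference $|\mu^\bullet|-|\lambda^\bullet|$ depends only on $\chi_{I_\up(\mu)}-\chi_{I_\up(\lambda)}=\chi_{\{i\}}-\chi_{\{j\}}$, yielding $i-j$ immediately. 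The characteristic-function (Maya diagram) formulation buys you a cleaner, reindexing-free computation; the paper's version is more elementary in that it does not introduce the auxiliary sets $E_\up,E_\down$ or the regularized sum, at the cost of more explicit index chasing.
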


\begin{proof}  Set $a_k=\lambda^\black_k-k+1$ and $b_k=k-\delta-\lambda^\white_k$ for each $k>0$ so that 
\begin{align*}
I_\up(\lambda) & = \{a_1,a_2,a_3,\ldots\},\\
I_\down(\lambda,\delta) & =\{b_1,b_2,b_3,\ldots\}.
\end{align*}  Then $|\lambda^\black|=\sum_{k>0}(a_k+k-1)$ and $|\lambda^\white|=\sum_{k>0}(k-\delta-b_k)$.  Now let $L,M\in\Z$ be such that $a_M=j$ and $b_L=i$.  Then swapping the labels of $i$ and $j$ in $x_\lambda$ results in $x_\mu$ with 
\begin{align*}
I_\up(\mu)&=\{a_1,\ldots,a_{M-1},a_{M+1},\ldots,a_N,b_L,a_{N+1},a_{N+2},\ldots\},\\
I_\down(\mu,\delta) & =\{b_1,\ldots,b_{L-1},b_{L+1},\ldots,b_{N'},a_M,b_{N'+1},b_{N'+2},\ldots\}
\end{align*} for some $N, N'$.  Hence 
\begin{align*}
|\mu^\black| & = \sum_{0<k<M\atop\text{or } k>N}(a_k+k-1)+\sum_{M<k\leq N}(a_k+(k-1)-1)+(b_L+N-1)\\
&= \sum_{ k>0}(a_k+k-1)-(a_M+M-1)-(N-M)+(b_L+N-1)\\
&=|\lambda^\black|+i-j.
\end{align*}
Similarly,
\begin{align*}
|\mu^\white| & = \sum_{0<k<L\atop\text{or } k>N'}(k-\delta-b_k)+\sum_{L<k\leq N'}(k-1-\delta-b_k)+(N'-\delta-a_M)\\
&= \sum_{ k>0}(k-\delta-b_k)-(L-\delta-b_L)-(N'-L)+(N'-\delta-a_M)\\
&=|\lambda^\white|+i-j.
\end{align*}
\end{proof}

The next corollary, an immediate consequence of Proposition \ref{swapsize}, will be useful later.

\begin{corollary}\label{Dprimeunitri}  $D'_{\lambda,\lambda}=1$ for all $\lambda$.  Moreover,  
$D_{\lambda,\mu}'=0$ unless $\mu=\lambda$ or $|\mu|=(|\lambda^\black|-i,|\lambda^\white|-i)$ for some $i> 0$.
\end{corollary}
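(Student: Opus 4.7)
The plan is to derive this corollary directly from Proposition \ref{swapsize} by iterating the one-swap size formula. The first claim is immediate: taking $k=0$ with $\nu^{(0)} = \lambda$ exhibits $\lambda$ as linked to itself, so $D'_{\lambda,\lambda} = 1$.

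For the second claim, suppose $D'_{\lambda, \mu} = 1$ and $\mu \neq \lambda$, so there is a chain $\lambda = \nu^{(0)}, \nu^{(1)}, \ldots, \nu^{(k)} = \mu$ with $k \geq 1$, each step swapping the labels of some $\down\up$-pair $(i_n, j_n)$ in $x_\lambda$. I would first observe that the distinct $\down\up$-pairs of $x_\lambda$ involve pairwise disjoint positions on the number line: each integer carries a unique label in $x_\lambda$, and each cap of $c_\lambda$ has two distinct endpoints, so no position lies on more than one cap. The elementary swaps therefore commute with one another as involutions, and $x_\mu$ is obtained from $x_\lambda$ by swapping exactly those pairs $(i,j)$ that appear an odd number of times in the chain; call this set $S$.

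One can then simulate the transition $x_\lambda \rightsquigarrow x_\mu$ by a length-$|S|$ chain that swaps each pair of $S$ exactly once, in any order. By disjointness, at every intermediate step the remaining pair to be swapped still has a $\down$ at its lower endpoint and a $\up$ at its higher endpoint (inherited from $x_\lambda$), so Proposition \ref{swapsize} applies at each step and contributes $-(j-i)$ to each of $|\lambda^\black|$ and $|\lambda^\white|$. Summing gives $|\mu| = (|\lambda^\black| - N, |\lambda^\white| - N)$ with $N = \sum_{(i,j) \in S}(j-i)$. Since $\mu \neq \lambda$ forces $S \neq \emptyset$ and each summand is strictly positive, we obtain $N > 0$, which is the desired form.

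The only subtle point is the commutation-and-cancellation observation that reduces an arbitrary linking chain to one using pairwise disjoint swaps; once that is in hand, the result follows by a direct recursion on Proposition \ref{swapsize}.
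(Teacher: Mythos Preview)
Your argument is correct and is essentially the same as the paper's, which simply states that the corollary is an immediate consequence of Proposition \ref{swapsize}. You have made explicit the one point the paper leaves tacit---that distinct $\down\up$-pairs of $x_\lambda$ have disjoint supports, so an arbitrary linking chain reduces to one that swaps each pair at most once and Proposition \ref{swapsize} can be iterated; note that at each intermediate step you are really invoking the \emph{proof} of Proposition \ref{swapsize} (which only uses that the lower endpoint is labelled $\down$ and the upper $\up$), since the cap joining $(i,j)$ need not persist in $c_{\nu^{(n-1)}}$.
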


\subsection{Computing $\lift_\delta(\lambda)$}\label{computelift}  The following theorem will allow us  to explicitly compute values of the lifting map using the combinatorics developed in \S\ref{BS} (see Corollary \ref{D}).  Our proof of this theorem relies heavily on the results in \cite{CD}.

\begin{theorem}\label{DD} $\sum_\nu D_{\lambda,\nu}'D_{\mu,\nu}'=(\lambda,\mu)_\delta$ for all bipartitions $\lambda, \mu$ and all $\delta\in\K$.
\end{theorem}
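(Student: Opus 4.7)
The plan is to reduce Theorem \ref{DD} to Corollary \ref{deltaform} via the identification $D_{\lambda,\mu} = D'_{\lambda,\mu}$, which is essentially the main content of \cite{CD}. Since Corollary \ref{deltaform} already supplies $(\lambda,\mu)_\delta = \sum_\nu D_{\lambda,\nu} D_{\mu,\nu}$, substituting $D' = D$ into this formula yields the desired identity immediately, and Corollary \ref{D} (that $D = D'$) then drops out for free.

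To establish $D_{\lambda,\mu} = D'_{\lambda,\mu}$ for all bipartitions, I would first translate the lifting-theoretic definition of $D_{\lambda,\mu}$ into a cellular-algebra statement. Writing $\tilde{e}_\lambda \in \K((t-\delta)) B_{r,s}(t)$ for a lift of $e_\lambda$, the number of copies of $L(\mu)$ appearing when $\im(\tilde{e}_\lambda)$ is decomposed in $\uRep(GL_t)$ coincides with the classical decomposition number $[\Delta(\lambda) : L(\mu)]$ of the cell module $\Delta(\lambda)$ of $B_{r,s}(\delta)$, under the cellular structure of the walled Brauer algebra from \cite{CDDM}. The main result of Cox-De Visscher then identifies this decomposition number with $D'_{\lambda,\mu}(\delta)$, interpreted as the indicator for the linking relation built from swapping labels of $\down\up$-pairs on the cap diagram $c_\lambda(\delta)$.

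The main obstacle is the convention-matching between the diagrams $x_\lambda, c_\lambda$ of \S\ref{BS} and those employed in \cite{CD} (themselves modeled on the cap diagrams of Brundan and Stroppel): one must verify that the bijection of Remark \ref{endsofx}, the $-\delta$ shift in the definition of $I_\down(\lambda,\delta)$, and the orientation/label-swap conventions all transport faithfully between the two setups. Special care is also needed in the non-semisimple case $\delta \in \Z$, and particularly in the boundary setting $\delta = 0$ with $\lambda = \biemp$, where the idempotents $e_{\biemp}^{(i)}$ are undefined for $i > 0$ and so the lifting construction of $D_{\biemp,\mu}$ requires direct verification using Example \ref{elamexs}(1), consistent with the cap-diagram computation in Example \ref{biboxDprime}. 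Once these translations are in place, $D = D'$ holds termwise and Corollary \ref{deltaform} delivers Theorem \ref{DD}.
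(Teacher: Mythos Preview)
Your route is genuinely different from the paper's, and it reverses the logical order: you want to prove $D_{\lambda,\mu}=D'_{\lambda,\mu}$ first (by identifying each with a cellular decomposition number of $B_{r,s}(\delta)$) and then read off Theorem~\ref{DD} from Corollary~\ref{deltaform}; the paper instead proves Theorem~\ref{DD} directly, using the Graham--Lehrer formula $\dim_\K\Hom_{B_{r,s}}(P(\lambda),P(\mu))=\sum_\nu D'_{\lambda,\nu}D'_{\mu,\nu}$ together with the identification $\Hom_{B_{r,s}}(P(\lambda),P(\mu))=\Hom_{\uRep(GL_\delta)}(L(\lambda),L(\mu))$, and only afterwards extracts $D=D'$ by a unitriangular induction (Corollary~\ref{D}).

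Your plan has a real gap at its central step. The claim that the lifting-theoretic multiplicity $D_{\lambda,\mu}$ ``coincides with the classical decomposition number $[\Delta(\lambda):L(\mu)]$'' is not a matter of convention-matching with \cite{CD}: it is a substantive statement about base change for cellular algebras. What the lifting actually computes is the multiplicity of the generic simple $\Delta(\mu)$ in the generic fibre of the projective $P(\lambda)=e_\lambda B_{r,s}(\delta)$; to identify this with a decomposition number at $\delta$ you must argue that the cell filtration of $P(\lambda)$ is defined over $\K[[t-\delta]]$, is preserved under $\otimes\K((t-\delta))$, and that its subquotients become the generic simples. This yields $D_{\lambda,\mu}=[P(\lambda):\Delta(\mu)]$, which by BGG reciprocity equals $[\Delta(\mu):L(\lambda)]$, not $[\Delta(\lambda):L(\mu)]$ as you wrote; the transposition matters if you then want to invoke \cite{CD}. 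None of this is supplied by \cite{CD}, which only identifies the $B_{r,s}(\delta)$ decomposition matrix with $D'$. The paper sidesteps the whole issue by never needing to know that $D$ is a decomposition matrix: it computes $(\lambda,\mu)_\delta$ as a Hom-dimension and applies the cellular formula directly in terms of $D'$.

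There is also a subtlety at $\delta=0$ that you underplay. The difficulty is not that $D_{\biemp,\mu}$ is undefined (it is perfectly well defined: lift $e_{\biemp}=\id_{\bf 1}$), but that $B_{r,r}(0)$ is cellular rather than quasi-hereditary and has no simple module labelled by $\biemp$, so the phrase ``$[\Delta(\lambda):L(\biemp)]$'' has no meaning and BGG reciprocity is unavailable. The paper handles this case by a separate argument identifying $D'_{\lambda,\biemp}$ with a composition multiplicity in the standard module $\Hom(w_{r,r},{\bf 1})$; your sketch would need an analogous replacement.
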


\begin{proof}  It follows from Corollary \ref{deltaform} that the statement of theorem is symmetric in $\lambda$ and $\mu$, hence we may assume $|\mu|\not>|\lambda|$.  Suppose $\lambda\vdash(r,s)$ and $\mu\vdash(r',s')$.  $(\lambda,\mu)_\delta$ is zero unless $r+s'=r'+s$ (Proposition \ref{HomLs}).  Hence, by Corollary \ref{Dprimeunitri}, it suffices to consider the case $\mu\vdash(r-i,s-i)$ for some $i\geq 0$.  

Assume $\delta\not=0$.   Then $B_{r,s}$ is a quasi-hereditary (hence cellular) algebra (see \cite[Corollary 2.8]{CDDM}) with decomposition numbers given by $D'_{\lambda,\nu}$ (see \cite[Theorem 4.10]{CD}\footnote{The results in \cite{CD} are only proved for $\K=\mathbb{C}$,  however, using Corollary \ref{absi} it can be shown that their results hold over arbitrary fields of characteristic zero.}).  In particular, this implies that the $B_{r,s}$-module homomorphisms between the projective modules $e_\lambda B_{r,s}$ and $e_\mu^{(i)} B_{r,s}$ satisfy the following:
 \begin{equation}\label{DDdim}
 \sum_\nu D_{\lambda,\nu}'D_{\mu,\nu}'=\dim_\K\Hom_{B_{r,s}}(e_\lambda B_{r,s}, e_\mu^{(i)} B_{r,s})
\end{equation} 
(see, for instance \cite[Theorem 3.7(iii)]{GL}).  Since 
\begin{equation}\label{LB}
\Hom_{B_{r,s}}(e_\lambda B_{r,s},e_\mu^{(i)} B_{r,s})=e_\mu^{(i)}B_{r,s}e_\lambda=\Hom_{\uRep(GL_\delta)}(L(\lambda),L(\mu)),
\end{equation} 
it follows that $(\lambda,\mu)_\delta$ agrees with (\ref{DDdim}).

If $\delta=0$, the algebra $B_{r,s}$ is no longer quasi-hereditary, but it is still cellular (see \cite[Theorem 2.7]{CDDM}). The decomposition numbers are still given by $D_{\lambda,\nu}'$, however there is no PIM labelled by $\biemp$ in this case, hence we must require $\lambda\not=\biemp$.  Hence, if neither $\lambda$ nor $\mu$ is $\biemp$, (\ref{DDdim}) and (\ref{LB}) still hold we are done as before.  Since we are assuming $|\mu|\not>|\lambda|$, to complete the proof of the theorem we only need to prove the case $\mu=\biemp$, $\lambda\vdash(r,r)$ and $\delta=0$.  Since $D'_{\biemp,\nu}=0$ whenever $\nu\not=\biemp$ and $D'_{\biemp,\biemp}=1$, we have $\sum_\nu D'_{\lambda,\nu}D'_{\biemp,\nu}=D'_{\lambda,\biemp}$.  The decomposition number $D'_{\lambda,\biemp}$ is the composition multiplicity of the simple $B_{r,r}$-module labelled by $\lambda$ in $\Hom(w_{r,r},{\bf 1})$, the standard $B_{r,r}$-module labelled by $\biemp$.  Hence, \begin{equation}\label{Demp}
D_{\lambda,\biemp}'=\dim_\K\Hom_{B_{r,r}}(e_\lambda B_{r,r},\Hom(w_{r,r},{\bf 1})).
\end{equation}  Since $$\Hom_{B_{r,r}}(e_\lambda B_{r,r},\Hom(w_{r,r},{\bf 1}))=\Hom(w_{r,r},{\bf 1})e_\lambda=\Hom_{\uRep(GL_0)}(L(\lambda), L(\biemp)),$$ $(\lambda,\biemp)_0$ agrees with (\ref{Demp}).
\end{proof}

\begin{corollary}\label{D} $D_{\lambda,\mu}(\delta)=D'_{\lambda,\mu}(\delta)$ for all bipartitions $\lambda, \mu$ and all $\delta\in\K$.  
\end{corollary}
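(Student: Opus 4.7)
The plan is to combine the two expressions for the bilinear form $(-,-)_\delta$ on $R_\delta$ and then invoke a Cholesky-style uniqueness argument based on unitriangularity. Specifically, Corollary \ref{deltaform} and Theorem \ref{DD} together give
\[
\sum_\nu D_{\lambda,\nu} D_{\mu,\nu} \;=\; (\lambda,\mu)_\delta \;=\; \sum_\nu D'_{\lambda,\nu} D'_{\mu,\nu}
\]
for all bipartitions $\lambda,\mu$. By Theorem \ref{liftprops}(2) and Corollary \ref{Dprimeunitri}, both matrices $D=(D_{\lambda,\mu})$ and $D'=(D'_{\lambda,\mu})$ are strictly upper unitriangular with respect to the partial order on bipartition sizes: both have $1$'s on the diagonal, and $D_{\lambda,\mu}=D'_{\lambda,\mu}=0$ unless $\mu=\lambda$ or $|\mu|=(|\lambda^\black|-i,|\lambda^\white|-i)$ for some $i>0$.

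I would proceed by primary induction on $|\lambda|$. The base case $\lambda=\biemp$ is immediate. For the inductive step, fix $\lambda$ and assume $D_{\lambda',\mu'}=D'_{\lambda',\mu'}$ for all $|\lambda'|<|\lambda|$ and all $\mu'$. When $|\mu|$ is not strictly less than $|\lambda|$, unitriangularity already gives $D_{\lambda,\mu}=D'_{\lambda,\mu}$ (both are $0$ unless $\mu=\lambda$, in which case both are $1$). When $|\mu|<|\lambda|$, the primary inductive hypothesis yields $D_{\mu,\nu}=D'_{\mu,\nu}$ for every $\nu$, so the displayed identity rearranges to
\[
\sum_\nu a_\nu D_{\mu,\nu} = 0, \qquad a_\nu := D_{\lambda,\nu}-D'_{\lambda,\nu}.
\]
Separating the $\nu=\mu$ term and using $D_{\mu,\mu}=1$ gives $a_\mu = -\sum_{|\nu|<|\mu|} a_\nu D_{\mu,\nu}$.

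A nested induction on $|\mu|$, running upward from the minimal size in the support of $D_{\lambda,\cdot}$ (at which level the right-hand sum is empty and forces $a_\mu=0$), then delivers $a_\mu=0$ at every step: the inductive step follows since each $\nu$ on the right has $|\nu|<|\mu|$, hence $a_\nu=0$ by the nested hypothesis. This completes the primary induction and hence the proof. I do not anticipate serious obstacles: the argument is essentially uniqueness of a Cholesky-like decomposition in a combinatorial setting, and all ingredients (the two Gram-matrix identities and the unitriangularity of both $D$ and $D'$) are in hand from the preceding results; the only care required is the bookkeeping of sizes, which is automatic since every $\nu$ appearing in the sums satisfies $|\nu^\black|-|\nu^\white|=|\lambda^\black|-|\lambda^\white|$.
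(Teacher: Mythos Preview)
Your proof is correct and takes essentially the same approach as the paper: both arguments combine the two Gram-matrix identities (Corollary~\ref{deltaform} and Theorem~\ref{DD}) with the unitriangularity of $D$ and $D'$ (Theorem~\ref{liftprops}(2) and Corollary~\ref{Dprimeunitri}) and then run an induction on sizes. The only difference is organizational---the paper uses a single induction on pairs $(\lambda,\mu)$ ordered lexicographically by size, whereas you use a primary induction on $|\lambda|$ with a nested induction on $|\mu|$---but the underlying logic is identical.
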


\begin{proof} First, put the following partial order on pairs of bipartitions:  $(\lambda,\mu)>(\lambda',\mu')$ means either $|\lambda|>|\lambda'|$, or $\lambda=\lambda'$ and $|\mu|>|\mu'|$.
We prove the corollary by inducting on this partial order.  First notice that $D_{\biemp,\biemp}=1=D'_{\biemp,\biemp}$.  Now assume $(\lambda,\mu)\not=(\biemp,\biemp)$.  By Theorem \ref{liftprops}(2) and Corollary \ref{Dprimeunitri} we may assume $|\lambda|>|\mu|$.  Thus 
$$\begin{array}{rll} 
D_{\lambda,\mu} & = (\lambda, \mu)_\delta-\sum\limits_{\nu\atop|\nu|<|\mu|}D_{\lambda,\nu}D_{\mu,\nu} & (\text{Corollary \ref{deltaform} and Theorem \ref{liftprops}(2)})\\
\\
& = (\lambda, \mu)_\delta-\sum\limits_{\nu\atop|\nu|<|\mu|}D'_{\lambda,\nu}D'_{\mu,\nu} & (\text{Induction})\\
& = D'_{\lambda,\mu} & (\text{Theorem \ref{DD} and Corollary \ref{Dprimeunitri}}).
\end{array}$$
\end{proof}

\begin{example}\label{exLift} Using Corollary \ref{D} and Example \ref{exDprime}  we have $$\lift_{-1}(((3,2),(3,1)))=((3,2),(3,1))+((3),(1^2))+((2^2),(3))+((2),(1)).$$
\end{example}

\section{Decomposing tensor products in \underline{Re}p$(GL_\delta)$}\label{tensordecomp}  In this section we give a generic decomposition formula for decomposing tensor products of indecomposable objects in $\uRep(GL_t)$.  We then show how this generic decomposition formula along with the lifting map from the previous section can be used to decompose arbitrary tensor products in $\uRep(GL_\delta)$.  Throughout this section we will work in the Grothendieck rings $R_\delta$ and $R_t$ (see \S\ref{R}).

\subsection{The generic case}\label{gencase}  The following theorem explains how to decompose the tensor product of two indecomposable objects in $\uRep(GL_t)$.

\begin{theorem}\label{genten}   Given bipartitions $\lambda,\mu$, and $\nu$, let $\Gamma_{\lambda,\mu}^\nu$ be as in (\ref{Gamma}).  Then $\lambda\mu=\sum_\nu\Gamma_{\lambda,\mu}^\nu\nu$ in $R_t$.
\end{theorem}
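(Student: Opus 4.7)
The plan is to reduce the computation in $R_t$ to one in $R_d$ for a sufficiently large positive integer $d$, and then apply the tensor functor $F_d:\uRep(GL_d)\to\Rep(GL_d)$ to transport the identity into $\Rep(GL_d)$, where Koike's Theorem \ref{Koike} applies directly.

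First I would fix $\lambda\vdash(r,s)$ and $\mu\vdash(r',s')$, and note that every indecomposable summand $L(\nu)$ of $L(\lambda)\otimes L(\mu)$ lies inside $w_{r+r',\,s+s'}$, so by Theorem \ref{indeclass} the bipartition $\nu$ belongs to $\Lambda_{r+r',\,s+s'}$.  This gives the uniform bound $l(\nu)\le r+r'+s+s'$.  By Theorem \ref{liftprops}(3), for each fixed bipartition the set of $\delta$ with $\lift_\delta$ not fixing it is finite, hence only finitely many positive integers $d$ are ``bad'' for $\lambda$, $\mu$, or for any $\nu$ with $l(\nu)\le r+r'+s+s'$.

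Next I would choose a positive integer $d$ with $d\ge l(\lambda)+l(\mu)$, $d\ge r+r'+s+s'$, and $d$ outside this finite bad set.  Writing the product in $R_d$ as $\lambda\mu=\sum_\nu c_\nu\nu$, the ring isomorphism $\lift_d:R_d\to R_t$ of Theorem \ref{liftprops}(1), together with the fact that $\lift_d$ fixes $\lambda$, $\mu$, and every $\nu$ appearing in the support, yields the same expansion $\lambda\mu=\sum_\nu c_\nu\nu$ in $R_t$.  Applying $F_d$ to $L(\lambda)\otimes L(\mu)\cong\bigoplus_\nu L(\nu)^{\oplus c_\nu}$ and invoking Theorem \ref{Fdim} (note that $l(\nu)\le d$ throughout the support) gives $V_\lambda\otimes V_\mu\cong\bigoplus_\nu V_\nu^{\oplus c_\nu}$ in $\Rep(GL_d)$.

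On the other hand, Koike's Theorem \ref{Koike} applied under the hypothesis $d\ge l(\lambda)+l(\mu)$ produces $V_\lambda\otimes V_\mu\cong\bigoplus_\nu V_\nu^{\oplus\Gamma^{\nu}_{\lambda,\mu}}$.  Semisimplicity of $\Rep(GL_d)$ together with the pairwise non-isomorphism of the $V_\nu$ for distinct bipartitions of length $\le d$ then force $c_\nu=\Gamma^{\nu}_{\lambda,\mu}$ throughout the (bounded) support, proving the theorem.  The main obstacle is juggling the three sufficient-largeness conditions on $d$---controlling the support, enabling Koike's formula as stated, and avoiding exceptional values of $\lift_\delta$---but each condition excludes only finitely many $d$, so admissible choices certainly exist.
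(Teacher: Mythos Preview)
Your argument is essentially the same as the paper's: choose $d$ large enough that (i) $d\ge l(\lambda)+l(\mu)$, (ii) $d$ bounds the lengths of all bipartitions in the support of $\lambda\mu$, and (iii) $\lift_d$ fixes $\lambda$, $\mu$, and every bipartition in that support; then transport via $\lift_d$ and compare with Koike's Theorem through $F_d$. The only difference is cosmetic: the paper first writes $\lambda\mu=\nu^{(1)}+\cdots+\nu^{(k)}$ in $R_t$ and then chooses $d$ for that specific finite list, whereas you bound the support a priori by $\Lambda_{r+r',\,s+s'}$.

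One small slip to tidy up: when you say ``only finitely many positive integers $d$ are `bad' for $\lambda$, $\mu$, or for any $\nu$ with $l(\nu)\le r+r'+s+s'$,'' this last clause quantifies over infinitely many bipartitions, so the union of bad sets need not be finite. What you actually need (and have already established) is finiteness over $\nu\in\Lambda_{r+r',\,s+s'}$, which is a finite set; replace the length condition by membership in $\Lambda_{r+r',\,s+s'}$ and the argument goes through verbatim.
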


\begin{proof} Fix bipartitions $\lambda$ and $\mu$ and let $\nu^{(1)},\ldots,\nu^{(k)}$ be bipartitions such that \begin{equation}\label{nus}
\lambda\mu=\nu^{(1)}+\cdots+\nu^{(k)}
\end{equation} in $R_t$.  By Theorem \ref{liftprops}(3) there exists a positive integer $d$ which simultaneously satisfies  (i) $d\geq l(\lambda)+l(\mu)$; (ii) $d\geq l(\nu^{(i)})$ for each $i=1,\ldots, k$; and (iii) $\lift_d$ fixes $\lambda,\mu,\nu^{(1)},\ldots,\nu^{(k)}$.  Now, $\lift_d$ is a ring isomorphism (Theorem \ref{liftprops}(1)), hence (\ref{nus}) holds in $R_d$ by assumption (iii).  Since $F_d$ is a tensor functor, by Theorem \ref{Fdim} along with assumptions (i) and (ii) we have $$V_\lambda\otimes V_\mu=V_{\nu^{(1)}}\oplus\cdots\oplus V_{\nu^{(k)}}$$ in $\Rep(GL_d)$.  The result now follows from Theorem \ref{Koike} and assumption (i).
\end{proof}

The following corollary lists special cases of Theorem \ref{genten}, which are easy to prove using basic properties of Littlewood Richardson coefficients:

\begin{corollary}  The following equations hold in $R_t$.
\begin{equation}
(\lambda^\black,\emp)(\mu^\black,\emp)=\sum_{\alpha\in\cat{P}}LR^\alpha_{\lambda^\black,\mu^\black}(\alpha,\emp),
\end{equation}
\begin{equation}
(\emp, \lambda^\white)(\emp, \mu^\white)=\sum_{\alpha\in\cat{P}}LR^\alpha_{\lambda^\white,\mu^\white}(\emp, \alpha),
\end{equation}
\begin{equation}\label{leftright}
(\lambda^\black,\emp)(\emp, \mu^\white)=\sum_{\nu}\sum_{\kappa\in\cat{P}}LR_{\kappa,\nu^\black}^{\lambda^\black}LR^{\mu^\white}_{\kappa,\nu^\white}\nu,
\end{equation}
\begin{equation}\label{leftbox}
\lambda(\Box,\emp)=\sum_{\lambda^{\black+}}(\lambda^{\black+}, \lambda^\white)+\sum_{\lambda^{\white-}} (\lambda^\black, \lambda^{\white-}),
\end{equation}
\begin{equation}\label{rightbox}
\lambda(\emp,\Box)=\sum_{\lambda^{\white+}} (\lambda^\black, \lambda^{\white+})+\sum_{\lambda^{\black-}}(\lambda^{\black-}, \lambda^\white),
\end{equation}
where the sums in (\ref{leftbox}) are taken over all partitions $\lambda^{\black+}$ (resp. $\lambda^{\white-}$) obtained from the Young diagram $\lambda^\black$ (resp, $\lambda^\white$) by adding one box (resp. removing one box).  Similarly for (\ref{rightbox}).
\end{corollary}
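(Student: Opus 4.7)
The plan is to deduce each identity as a special case of Theorem \ref{genten}, so the only work is specializing the combinatorial formula \eqref{Gamma} using standard properties of Littlewood--Richardson coefficients. Throughout I will use the elementary identities $LR^{\alpha}_{\emp,\beta} = LR^{\alpha}_{\beta,\emp} = \delta_{\alpha,\beta}$ (expansion against the trivial factor) and, for the last two equations, the Pieri rule $LR^{\alpha}_{\beta,(1)} = 1$ if $\alpha$ is obtained from $\beta$ by adding a single box and $0$ otherwise.

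First I would handle the three equations involving only partitions $\lambda^\black,\lambda^\white,\mu^\black,\mu^\white$. For $(\lambda^\black,\emp)\cdot(\mu^\black,\emp)$, substituting $\lambda^\white=\mu^\white=\emp$ into \eqref{Gamma} forces $\kappa=\beta=\gamma=\eta=\emp$ via the first two Littlewood--Richardson identities, collapsing the $\alpha$ and $\theta$ sums to $\alpha=\lambda^\black$ and $\theta=\mu^\black$, and forcing $\nu^\white=\emp$; what remains is exactly $\sum_{\alpha}LR^{\alpha}_{\lambda^\black,\mu^\black}(\alpha,\emp)$. The second identity is symmetric. For the mixed product $(\lambda^\black,\emp)(\emp,\mu^\white)$, setting $\lambda^\white=\mu^\black=\emp$ in \eqref{Gamma} forces $\gamma=\eta=\theta=\emp$, so the inner $\gamma$-sum collapses; the factors $LR^{\nu^\black}_{\alpha,\emp}$ and $LR^{\nu^\white}_{\beta,\emp}$ then identify $\alpha=\nu^\black$ and $\beta=\nu^\white$, leaving precisely \eqref{leftright}.

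Next I would treat \eqref{leftbox}, the multiplication by $(\Box,\emp)$. Take $\mu^\black=(1)$, $\mu^\white=\emp$ in \eqref{Gamma}. As before $\kappa=\beta=\emp$ and $\alpha=\lambda^\black$, so everything reduces to $\sum_{\eta,\theta,\gamma}LR^{\lambda^\white}_{\gamma,\eta}\,LR^{(1)}_{\gamma,\theta}\,LR^{\nu^\black}_{\lambda^\black,\theta}\,LR^{\nu^\white}_{\emp,\eta}$. The Pieri factor $LR^{(1)}_{\gamma,\theta}$ is nonzero only for $(\gamma,\theta)=(\emp,(1))$ or $(\gamma,\theta)=((1),\emp)$. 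The first case forces $\eta=\lambda^\white=\nu^\white$ and leaves $LR^{\nu^\black}_{\lambda^\black,(1)}=1$, contributing the summands $(\lambda^{\black+},\lambda^\white)$; the second forces $\theta=\emp$, so $\nu^\black=\lambda^\black$, while $LR^{\lambda^\white}_{(1),\eta}=1$ forces $\eta=\nu^\white$ to be obtained from $\lambda^\white$ by removing one box, contributing $(\lambda^\black,\lambda^{\white-})$. Equation \eqref{rightbox} is established identically by instead setting $\mu^\white=(1)$, $\mu^\black=\emp$.

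No real obstacle is expected, since Theorem \ref{genten} supplies the master formula and each identity is a purely combinatorial specialization. The mildest care is needed only in \eqref{leftbox} and \eqref{rightbox}, where one must verify that the two branches of the Pieri split account respectively for adding a box to $\lambda^\black$ and removing a box from $\lambda^\white$ (and conversely for $\lambda(\emp,\Box)$), which follows directly from the definition of $\lambda^{\black+}$ and $\lambda^{\white-}$.
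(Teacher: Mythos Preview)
Your proposal is correct and takes essentially the same approach as the paper, which simply states that the corollary ``lists special cases of Theorem \ref{genten}, which are easy to prove using basic properties of Littlewood Richardson coefficients.'' You have filled in precisely those details, and your specializations of \eqref{Gamma} and applications of the identities $LR^{\alpha}_{\emp,\beta}=\delta_{\alpha,\beta}$ and the Pieri rule are accurate.
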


\begin{example}\label{extenst}  In this example we compute $((2),\emp)\bibox\in R_t$.  By Example \ref{boxempempbox} (or using (\ref{leftright}), (\ref{leftbox}), or (\ref{rightbox})) we have $(\Box,\emp)(\emp,\Box)=\bibox+\biemp\in R_t$.  Hence, using (\ref{leftbox}) and (\ref{rightbox}) we have the following in $R_t$:
\begin{align*}
((2),\emp)\bibox & = ((2),\emp)((\Box,\emp)(\emp,\Box)-\biemp)\\
& = ((2),\emp)(\Box,\emp)(\emp,\Box)-((2),\emp)\\
& = (((2,1),\emp)+((3),\emp))(\emp,\Box)-((2),\emp)\\
& = ((2,1),\Box)+((3),\Box)+((1^2),\emp)+((2),\emp).
\end{align*}
\end{example}

\subsection{Decomposing arbitrary tensor products}\label{arbtens}  To compute the product of two bipartitions $\lambda,\mu\in R_\delta$ for arbitrary $\delta\in\K$ (i.e. to decompose tensor products in $\uRep(GL_\delta)$) we \begin{enumerate}
\item determine the coefficients in $\lift_\delta(\lambda\mu)=\sum_{\nu,\nu'}D_{\lambda,\nu}D_{\mu,\nu'}\nu\nu'$ using Corollary \ref{D},

\item use the results in \S\ref{gencase} to expand $\sum_{\nu,\nu'}D_{\lambda,\nu}D_{\mu,\nu'}\nu\nu'=\nu^{(1)}+\cdots+\nu^{(k)}\in R_t$, 

\item determine $\lift_\delta^{-1}(\nu^{(1)}+\cdots+\nu^{(k)})=\lambda\mu$, which by Theorem \ref{liftprops}(2) consists of a sum of a subset of the bipartitions $\nu^{(1)},\ldots,\nu^{(k)}$.
\end{enumerate}
The following examples illustrate the process described above:

\begin{example} Consider $((2^2),(3,1))(\Box,\emp)\in R_{-1}$.  Since 
$$\includegraphics{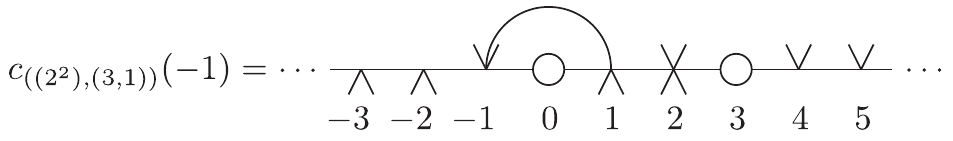}$$ and 
$$\includegraphics{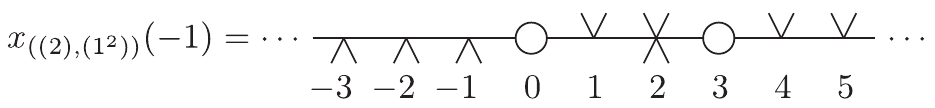}$$
by Corollary \ref{D} we have $\lift_{-1}(((2^2),(3,1)))=((2^2),(3,1))+((2),(1^2))$.  Moreover, by Example \ref{liftemp}, $\lift_{-1}((\Box,\emp))=(\Box,\emp)$.  Since $\lift_{-1}$ is a ring map (Theorem \ref{liftprops}(1)) it follows that 
\begin{equation*}
\lift_{-1}(((2^2),(3,1))(\Box,\emp)) = ((2^2),(3,1))(\Box,\emp)+((2),(1^2))(\Box,\emp),
\end{equation*} which, by (\ref{leftbox}), is equal to 
{\small\begin{equation*}\label{bigsum}
((3,2),(3,1))+((2^2,1),(3,1))+((2^2),(3))+((2^2),(2,1))+((2,1),(1^2))+((3),(1^2))+((2),\Box).
\end{equation*}}
Now, by Example \ref{exDprime} along with Corollary \ref{D}  \begin{equation*}
\lift_{-1}(((3,2),(3,1)))=((3,2),(3,1))+((3),(1^2))+((2^2),(3))+((2),\Box).
\end{equation*}Similarly, one can show $\lift_{-1}(((2^2,1),(3,1)))=((2^2,1),(3,1))$ and 
\begin{equation*}
\lift_{-1}(((2^2),(2,1)))=((2^2),(2,1))+((2,1),(1^2)).
\end{equation*}  Hence,  $$\lift_{-1}(((2^2),(3,1))(\Box,\emp))=\lift_{-1}(((3,2),(3,1))+((2^2,1),(3,1))+((2^2),(2,1))).$$ It follows from Theorem \ref{liftprops}(1) that $$((2^2),(3,1))(\Box,\emp)=((3,2),(3,1))+((2^2,1),(3,1))+((2^2),(2,1))\in R_{-1}.$$
\end{example}

\begin{example}\label{dectensex} In this example we compute $((2),\emp)\bibox\in R_\delta$ for arbitrary $\delta\in\K$.  First, since $\lift_\delta$ is a ring map (Theorem \ref{liftprops}(1)), by Examples \ref{liftemp} and \ref{liftex} we have 
\begin{equation*}
\lift_\delta(((2),\emp)\bibox)=\left\{\begin{array}{ll}
((2),\emp)\bibox+((2),\emp)  & \text{if }\delta=0,\\
((2),\emp)\bibox & \text{if }\delta\not=0.
\end{array}\right.
\end{equation*}
Hence, by Example \ref{extenst} we have 
\begin{equation*}
\lift_\delta(((2),\emp)\bibox)=\left\{\begin{array}{ll}
((2,1),\Box)+((3),\Box)+((1^2),\emp)+2((2),\emp) & \text{if }\delta=0,\\
((2,1),\Box)+((3),\Box)+((1^2),\emp)+((2),\emp) & \text{if }\delta\not=0.
\end{array}\right.
\end{equation*}  Now, $\lift_\delta(((1^2),\emp))=((1^2),\emp)$ and $\lift_\delta(((2),\emp))=((2),\emp)$ for all $\delta$ (Example \ref{liftemp}).  Moreover, using Corollary \ref{D}, we compute 
\begin{equation*}
\lift_\delta(((2,1),\Box))=\left\{\begin{array}{ll}
((2,1),\Box)+((1^2),\emp) & \text{if }\delta=-1,\\
((2,1),\Box)+((2),\emp)  & \text{if }\delta=1,\\
((2,1),\Box)  & \text{if }\delta\not=\pm1.
\end{array}\right.
\end{equation*}
\begin{equation*}
\lift_\delta(((3),\Box))=\left\{\begin{array}{ll}
((3),\Box)+((2),\emp)  & \text{if }\delta=-2,\\
((3),\Box)  & \text{if }\delta\not=-2.
\end{array}\right.
\end{equation*}
Since $\lift_\delta$ is an isomorphism for all $\delta$, we have the following in $R_\delta$: \begin{equation*}
((2),\emp)\bibox=\left\{\begin{array}{ll}
((2,1),\Box)+((3),\Box)+((1^2),\emp)+2((2),\emp) & \text{if }\delta=0,\\
((2,1),\Box)+((3),\Box)+((2),\emp) & \text{if }\delta=-1,\\
((2,1),\Box)+((3),\Box)+((1^2),\emp) & \text{if }\delta\in\{1,-2\},\\
((2,1),\Box)+((3),\Box)+((1^2),\emp)+((2),\emp) & \text{otherwise}.
\end{array}\right.
\end{equation*}

\end{example}

\section{Representations of the general linear supergroup}\label{super}

Fix $m,n\geq0$ and consider the algebraic supergroup $GL(m|n)$ over $\K$.  In this paper we will only deal with finite dimensional representations of $GL(m|n)$, which can be identified with integrable representations of the corresponding Lie superalgebra $\gl(m|n)$.  We prefer to exploit this identification and work with $\gl(m|n)$ rather than  $GL(m|n)$, and we will do so for the rest of the paper.   We begin by fixing notation and conventions for representations of $\gl(m|n)$.

\subsection{The category $\Rep(\glmn)$}\label{superrepcategory}

Let $V = \psspace V 0 \oplus \psspace V 1$ denote a superspace over $\K$ with $ \dim_\K \psspace V 0 = m$, $ \dim_\K \psspace V 1 = n$.
Write $\glmn$ for the associated \newterm{general linear Lie superalgebra}, that is, for the Lie superalgebra of endomorphisms of the superspace $V$, considered as $(m+n)\times(m+n)$ matrices.
Then $V$ is called the \newterm{natural module} for $\glmn$.

Let $\Rep(\glmn)$ denote the category of finite-dimensional $\glmn$-modules.
Given $\glmn$-modules $U, U'$, the tensor product of superspaces $U \otimes U'$ is again a $\glmn$-module with action
\begin{equation}\label{glmntensor}
x \cdot (u \otimes u') = (x \cdot u) \otimes u' + (-1)^{\parity x \parity u} \ u \otimes (x \cdot u'),
\end{equation}
for $x\in \glmn, u \in U, u' \in U'$.
Thus, as per example \ref{svectismonoidal}, $\Rep(\glmn)$ is a monoidal category, where the one-dimensional purely-even module $\unit$ carries the trivial action.
Note that, in particular, we consider the tensor product of $\Rep(\glmn)$ to be strict.

If $U$ is a $\glmn$-module, then the dual superspace $U^*$ (cf. example \ref{svectistensor}) is again a $\glmn$-module with action
\begin{equation}\label{glmndual}
(x \cdot \phi) (u) = - (-1)^{\parity x \parity \phi} \phi (x \cdot u),
\end{equation}
for $x \in \glmn, \phi \in U^*, u \in U$. 
The maps $\ev_U$, $\coev_U$ are maps of $\glmn$-modules, and so $\Rep(\glmn)$ is a tensor category.

It is well known that the category $\Rep(\glmn)$ is not semisimple when $n > 0$ or $m>0$ (see e.g. \cite{Serganova1996}).

\subsection{Characters}
Let $\cartan \subset \glmn$ denote the subalgebra of diagonal matrices.
For any object $U$ of $\Rep(\glmn)$, let
$$ \ch U = \sum_{\mu \in \cartan*}  \dim_\K U_\mu \ e^\mu,$$
denote the \newterm{character} of $U$, where $ \dim_\K U_\mu$ denotes the dimension of the $\mu$-weight space of $U$ as a vector space over $\K$, and $e^\mu$ denotes the formal exponential.
Addition and multiplication of characters of objects in $\Rep(\glmn)$ are defined component-wise and by convolution, respectively.
Since $\cartan$ is purely even, it follows that from the definition of the biproduct and equation \eqref{glmntensor} that
$$ \ch (U \oplus U') = \ch U + \ch U', \qquad \ch (U \otimes U') = \ch U \cdot \ch {U'} $$
for any objects $U, U'$ of $\Rep(\glmn)$.

For any $r,s \geq 0$, write
$ T(r,s)=V^{\otimes r}\otimes(V^*)^{\otimes s} $, as per section \S\ref{mtp}.
Let $\{ \ep_i \}_{1 \leq i \leq m+n}$ denote the diagonal coordinate functions of $\glmn$, so that $\ep_i$ takes any matrix to its $(i,i)$-entry.
Then for any $r,s \geq 0$, all weights of $T(r,s)$ are integral linear combinations of the $\ep_i$ (hence also of any submodule of $T(r,s)$).  Let
$$ x_i = e^{\ep_i}, \quad 1 \leq i \leq m, \qquad y_j = e^{\ep_{m+j}}, \quad 1 \leq j \leq n.$$
Then if $U$ is a submodule of $T(r,s)$, then $\ch U = \ch U (x | y)$ is a Laurent polynomial in the variables $x = \{x_i\}$ and $y = \{y_j\}$.
Write $\bar{x}=\{x_i^{-1}\}_{1\leq i\leq m}$ and $\bar{y}=\{y^{-1}_i\}_{1\leq i\leq n}$.
Then it follows from equation \eqref{glmndual} that
$$\ch{(U^*)} (x|y) = \ch U (\bar{x} | \bar{y}).$$

\begin{example}\label{examplechars}
One has that $\ch \unit = 1$, while
$$ \ch V = x_1 + \cdots + x_m + y_1 + \cdots + y_n, \qquad \ch V^* = x_1^{-1} + \cdots + x_m^{-1} + y_1^{-1} + \cdots + y_n^{-1}.$$
\end{example}

\subsection{The functor $F_{m|n}:\uRep(GL_{m-n})\to\Rep(\mathfrak{gl}(m|n))$} 
Let $$F_{m|n}:\uRep(GL_{m-n})\to\Rep(\mathfrak{gl}(m|n))$$ denote the tensor functor which sends $\black\mapsto V$ defined by Proposition \ref{uni} .
For any $\zeta \in \K$, the central element $\zeta \cdot \id_{m+n} \in \glmn$ acts on the mixed tensor power $T(r,s)$ by the scalar $\zeta^{r-s}$, and so
$\Hom_{\Rep(\glmn)} (T(r,s), T(r',s') = 0$ unless $r+s'=r'+s$.
Moreover, it is well-known that the $\K$-algebra map $\K \Sigma_p \to \End_{\Rep(\glmn)}(V^{\otimes p})$ defined by the symmetric braiding is surjective, for any $p \geq 0$ (see \cite[Remark 4.15]{BR} or \cite{Serg}).
Thus, by Theorem \ref{fullness}, the functor $F_{m|n}$ is full and any indecomposable summand of a mixed tensor powers $T(r,s)$ is isomorphic to $F_{m|n} (L(\lambda))$ for some bipartition $\lambda$.
Generalizing our notation from \S\ref{Fd}, we set $W(\lambda)=F_{m|n}(L(\lambda))$.
The rest of this section is devoted to describing $W(\lambda)$ for arbitrary $\lambda$.  More precisely, we will give a formula for computing the character of $W(\lambda)$ in \S\ref{charW} and give a criterion for the vanishing of $W(\lambda)$ in \S\ref{vanishW}.  By Theorem \ref{fullness}, these results give a classification of indecomposable summands of mixed tensor space in $\Rep(\mathfrak{gl}(m|n))$.

\subsection{Composite supersymmetric Schur polynomials}
In \S\ref{charW} we give a formula for computing the character of $W(\lambda)$ in $\Rep(\mathfrak{gl}(m|n))$ (see Theorem \ref{chW}).  In the case  $\lambda^\white=\emp$ (resp. $\lambda^\black=\emp$), the character of $W(\lambda)$ was computed in \cite{BR} and \cite{Serg} and is called a \emph{covariant (resp. contravariant) supersymmetric Schur polynomial}.
Our formula for $\ch W(\lambda)$ for arbitrary $\lambda$  is in terms of the numbers $D_{\lambda,\mu}$ (\S\ref{lift}) and the so-called \emph{composite supersymmetric Schur polynomials\footnote{Also known as \emph{composite supersymmetric S-polynomials.}}} (see for instance \cite{MV06}).  There are many equivalent definitions of  composite supersymmetric Schur polynomials; we will use the determinantal formula found, for instance, in \cite[(38)]{MV06}.   In order to state this formula we need a few preliminary definitions.  As in \S\ref{superrepcategory}, we work with the variables $x=\{x_i\}_{1\leq i\leq m}$ and $y=\{y_i\}_{1\leq i\leq n}$, and write $\bar{x}=\{x_i^{-1}\}_{1\leq i\leq m}$ and $\bar{y}=\{y^{-1}_i\}_{1\leq i\leq n}$.   Now, we define the \emph{complete supersymmetric polynomials} by 
\begin{equation*}
\h_k=\h_k(x|y)=\sum_{i=0}^kh_{k-i}(x)e_i(y)
\end{equation*}
where $h_k(x)$ and $e_k(y)$ are the complete and elementary symmetric polynomials respectively  (see for instance \cite[\S I.2]{Mac}).  In particular, $\h_0=1$ and $\h_k=0$ whenever $k<0$.
Next, we write ${\bf\bar{h}}_k=\h_k(\bar{x}|\bar{y})$.  Now, given a bipartition $\lambda$, we define the \emph{composite supersymmetric Schur polynomial} $\s_\lambda=\s_\lambda(x|y)$ as the following determinate (compare with \cite[(38)]{MV06}\footnote{In the literature $\s_\lambda(x|y)$ is sometimes denoted $s_{\overline{\lambda^\white}; \lambda^\black}(x/y)$ or even $\{\overline{\lambda^\white}; \lambda^\black\}$.}):
\begin{equation*}\
\s_\lambda=\text{det}\left(
\begin{array}{cccccccc}
\bh_{\lambda^\white_{q}} & &  \vdots\\
\bh_{\lambda^\white_{q}-1} & \ddots & \bh_{\lambda^\white_{2}+1}  &  \vdots\\
\vdots & & \bh_{\lambda^\white_{2}}& \bh_{\lambda^\white_{1}+1} & \vdots \\
& &  \bh_{\lambda^\white_{2}-1} & \bh_{\lambda^\white_{1}} & \h_{\lambda^\black_{1}-1}& \vdots\\
&&\vdots& \bh_{\lambda^\white_{1}-1} & \h_{\lambda^\black_{1}} & \h_{\lambda^\black_2-1}\\
&&&\vdots & \h_{\lambda^\black_{1}+1} & \h_{\lambda^\black_2}& &\vdots\\
&&&& \vdots & \h_{\lambda^\black_2+1} & \ddots & \h_{\lambda_p^\black-1}\\
&&&&&\vdots & &\h_{\lambda_p^\black}
\end{array}
\right)
\end{equation*} where $p$ (resp. $q$) is any integer greater than or equal to $ l(\lambda^\black)$ (resp. $ l(\lambda^\white)$).

\begin{example}\label{schurex}  Fix $m=1$ and $n=2$.  Then 
\begin{align*}
\s_{((1),(2))} & =\text{det}\left(\begin{array}{cc}
\bh_2 & \h_0  \\
\bh_1  & \h_1
\end{array}\right)\\
& =\text{det}\left(\begin{array}{cc}
\frac{1}{x_1^2}+\frac{1}{x_1y_1}+\frac{1}{x_1y_2}+\frac{1}{y_1y_2} &  1\\
\frac{1}{x_1}+\frac{1}{y_1}+\frac{1}{y_2} &  x_1+y_1+y_2
\end{array}\right)\\
&=\tfrac{x_1}{y_1y_2}+\tfrac{y_1}{x_1y_2}+\tfrac{y_2}{x_1y_1}+\tfrac{y_1}{x_1^2}+\tfrac{y_2}{x_1^2}+2\tfrac{1}{x_1}+\tfrac{1}{y_1}+\tfrac{1}{y_2},\\
\text{whereas}\\
\s_{((1^2),(3))} & =\text{det}\left(\begin{array}{ccc}
\bh_3 & \h_0 & \h_{-1}\\
\bh_2 & \h_1 & \h_0\\
\bh_1 & \h_2 & \h_1
\end{array}\right)\\
& =\text{det}\left(\begin{array}{ccc}
\frac{1}{x_1^{3}}+\frac{1}{x_1^{2}y_1}+\frac{1}{x_1^{2}y_2}+\frac{1}{x_1y_1y_2} & 1 & 0\\
\frac{1}{x_1^{2}}+\frac{1}{x_1y_1}+\frac{1}{x_1y_2}+\frac{1}{y_1y_2} & \text{{\small $x_1+y_1+y_2$} }& 1\\
\frac{1}{x_1}+\frac{1}{y_1}+\frac{1}{y_2} & \text{{\small $x_1^2+x_1y_1+x_1y_2+y_1y_2$}} & \text{{\small $x_1+y_1+y_2$}}
\end{array}\right)\\
&=\tfrac{y_1^2}{x_1^{3}}+\tfrac{y_1y_2}{x_1^3}+\tfrac{y_2^2}{x_1^3}+\tfrac{y_1^2}{x_1^2y_2}+\tfrac{y_2^2}{x_1^2y_1}+2\tfrac{y_1}{x_1^2}+2\tfrac{y_2}{x_1^2}+\tfrac{1}{x_1}+\tfrac{y_1}{x_1y_2}+\tfrac{y_2}{x_1y_1}-\tfrac{x_1}{y_1y_2}.
\end{align*}
\end{example}

\subsection{The character of $W(\lambda)$}\label{charW}
The following proposition lists some of the well-known properties of $\s_\lambda$ which will be useful for this paper.

\begin{proposition}\label{sprops}
(1) $\ch W(\lambda)=\s_\lambda$ whenever $\lambda^\black=\emp$ or $\lambda^\white=\emp$.

(2) $\s_{(\lambda^\black,\emp)}\s_{(\emp,\mu^\white)}=\sum_{\nu}\sum_{\kappa\in\cat{P}}LR_{\kappa,\nu^\black}^{\lambda^\black}LR^{\mu^\white}_{\kappa,\nu^\white}\s_\nu$.
\end{proposition}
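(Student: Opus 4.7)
The proof splits into two essentially independent parts.

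\textbf{For part (1)}, my plan is to handle the two cases separately, with the second reducing to the first by duality. When $\lambda^\white = \emp$, the determinantal formula for $\s_{(\lambda^\black, \emp)}$ has no $\bh$ block and collapses to the Jacobi--Trudi expression $\det(\h_{\lambda^\black_i - i + j})$, which is precisely the covariant supersymmetric Schur polynomial in the variables $x$ and $y$. On the representation-theoretic side, Example \ref{elamexs}(1) identifies $e_{(\lambda^\black, \emp)}$ with the Young symmetrizer $z_{\lambda^\black}^\black$, so $W((\lambda^\black, \emp)) = F_{m|n}(L((\lambda^\black, \emp)))$ is the image of $z_{\lambda^\black}$ acting on $V^{\otimes |\lambda^\black|}$. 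This is exactly the Berele--Regev--Sergeev construction \cite{BR,Serg}, whose character is the covariant supersymmetric Schur polynomial; comparison finishes this case. For $\lambda^\black = \emp$, Proposition \ref{Ldual} yields $W((\emp, \lambda^\white)) \cong W((\lambda^\white, \emp))^*$, and since characters in $\Rep(\glmn)$ satisfy $\ch(U^*)(x|y) = \ch U(\bar x | \bar y)$, I would finish by the elementary determinantal observation that $\s_{(\lambda^\white, \emp)}(\bar x | \bar y) = \s_{(\emp, \lambda^\white)}(x|y)$, which is immediate from the interchange $\h \leftrightarrow \bh$.

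\textbf{For part (2)}, the identity is a purely combinatorial statement about the Laurent polynomials $\s_\lambda$ defined by the determinant; no super Lie-algebra input is used. My plan is to reduce it to the well-known analogue for ordinary Schur polynomials. Concretely, I would either cite the corresponding identity for composite Schur polynomials in \cite{MV06}, or give an independent proof along the following lines: expand $\s_\nu$ as a signed sum of products $s_{\nu^\black/\kappa'}(x|y) \cdot s_{\nu^\white/\kappa'}(\bar x | \bar y)$ (a standard consequence of the Jacobi--Trudi determinant expansion), and use the classical Littlewood--Richardson expansion
\begin{equation*}
s_{\lambda^\black}(x|y) \cdot s_{\mu^\white}(\bar x | \bar y) = \sum_{\alpha, \beta, \kappa} LR^{\lambda^\black}_{\kappa, \alpha} LR^{\mu^\white}_{\kappa, \beta} \, s_\alpha(x|y) s_\beta(\bar x | \bar y)
\end{equation*}
combined with standard identities among Littlewood--Richardson coefficients to match the coefficient of $\s_\nu$ on each side. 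Alternatively, one can appeal to Koike's universal character theorem: the identity (\ref{leftright}) in $R_t$ is an identity among universal characters, and $\s_\nu$ is precisely the polynomial realization of the universal character indexed by $\nu$; hence the identity in $R_t$ descends to the polynomial identity asserted here.

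\textbf{Expected main obstacle.} Part (1) is essentially bookkeeping once one recognizes the Berele--Regev--Sergeev construction; the real content lies in part (2). The determinantal formula is not manifestly compatible with the multiplicative structure appearing on the right-hand side, and bridging this gap (either via a combinatorial identity manipulation or via the universal-character viewpoint) is the subtle step. My instinct would be to cite \cite{MV06} rather than rederive from scratch, since this composite-Schur product formula is a classical result in the theory of universal characters of general linear (super)groups.
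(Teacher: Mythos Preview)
Your proposal is correct and takes essentially the same approach as the paper: both parts are treated as known facts from the literature on (composite) supersymmetric Schur polynomials, to be cited rather than proved from scratch. The paper's proof is even terser than yours---it simply points to the determinantal formula in \cite{MV03} for (1) and to \cite[(3.2)]{CK} for (2)---so your more detailed sketch (including the duality reduction for the $\lambda^\black=\emp$ case and the universal-character viewpoint for (2)) is a reasonable expansion of what the paper leaves implicit.
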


\begin{proof} (1) follows from the corresponding determinantal formula for the (non-composite) supersymmetric Schur polynomials (see \cite[(6)]{MV03} and references therein).   For (2) see \cite[(3.2)]{CK}.
\end{proof}

We are now ready to prove our formula for computing the character of $W(\lambda)$.

\begin{theorem}\label{chW} $\ch W(\lambda)=\sum_{\mu}D_{\lambda,\mu}(m-n)\s_\mu$ for any bipartition $\lambda$.
\end{theorem}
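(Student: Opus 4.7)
The plan is to induct on the total number of boxes $|\lambda^\black|+|\lambda^\white|$. For the base case $\lambda^\black=\emp$ or $\lambda^\white=\emp$, the character identity reduces to Proposition \ref{sprops}(1), while on the right-hand side Example \ref{liftemp} gives $\lift_{m-n}(\lambda)=\lambda$, so $D_{\lambda,\mu}(m-n)=\delta_{\lambda,\mu}$ and the right-hand side also collapses to $\s_\lambda$.

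For the inductive step, with $\lambda^\black,\lambda^\white\neq\emp$, I will use the tensor product $L((\lambda^\black,\emp))\otimes L((\emp,\lambda^\white))$ as the hinge. By Proposition \ref{zdecomp}, this equals $\im z_\lambda = L(\lambda)\oplus\bigoplus_k L(\mu^{(k)})$ in $\uRep(GL_{m-n})$, with each $\mu^{(k)}$ of strictly smaller total size. Applying the tensor functor $F_{m|n}$, taking characters, and inserting Proposition \ref{sprops}(1) on the two factors on the left yields
$$\s_{(\lambda^\black,\emp)}\cdot\s_{(\emp,\lambda^\white)} \;=\; \ch W(\lambda) \;+\; \sum_k\ch W(\mu^{(k)}).$$
Proposition \ref{sprops}(2) expands the left side as $\sum_\nu B_\lambda^\nu\,\s_\nu$, with $B_\lambda^\nu=\sum_\kappa LR^{\lambda^\black}_{\kappa,\nu^\black}LR^{\lambda^\white}_{\kappa,\nu^\white}$; substituting the inductive hypothesis for each $\ch W(\mu^{(k)})$ and solving gives
$$\ch W(\lambda) \;=\; \sum_\nu\Bigl(B_\lambda^\nu - \sum_k D_{\mu^{(k)},\nu}(m-n)\Bigr)\s_\nu.$$

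To finish, I will identify the bracketed coefficient with $D_{\lambda,\nu}(m-n)$ by applying the ring isomorphism $\lift_{m-n}\colon R_{m-n}\to R_t$ of Theorem \ref{liftprops}(1) to the very same identity $(\lambda^\black,\emp)\cdot(\emp,\lambda^\white)=\lambda+\sum_k\mu^{(k)}$ that Proposition \ref{zdecomp} provides in $R_{m-n}$. Example \ref{liftemp} shows that $\lift_{m-n}$ fixes $(\lambda^\black,\emp)$ and $(\emp,\lambda^\white)$, so the image of the left-hand side is the product $(\lambda^\black,\emp)\cdot(\emp,\lambda^\white)$ computed in $R_t$; by equation (\ref{leftright}) this equals $\sum_\nu B_\lambda^\nu\,\nu$, featuring \emph{exactly} the same Littlewood--Richardson combination $B_\lambda^\nu$ as in Proposition \ref{sprops}(2). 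Reading off the coefficient of $\nu$ against the image of the right-hand side gives $B_\lambda^\nu = D_{\lambda,\nu}(m-n) + \sum_k D_{\mu^{(k)},\nu}(m-n)$, which inserted into the previous display completes the induction.

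The principal point to track is the coincidence of Littlewood--Richardson coefficients between the character-side identity of Proposition \ref{sprops}(2) and the $R_t$-side identity (\ref{leftright}); once this parallelism is recognised, the ring map $\lift_{m-n}$ is the precise translation device that converts Proposition \ref{zdecomp} into the desired character expansion, and no deeper input about $\Rep(\glmn)$ is required beyond fullness of $F_{m|n}$ (already used implicitly through $\lift$) and Proposition \ref{sprops}.
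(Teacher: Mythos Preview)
Your proof is correct and follows essentially the same argument as the paper: both induct on the size of $\lambda$, use Proposition~\ref{zdecomp} to split $L((\lambda^\black,\emp))\otimes L((\emp,\lambda^\white))$, apply $F_{m|n}$ and Proposition~\ref{sprops} on the character side, and apply $\lift_{m-n}$ together with Example~\ref{liftemp} and equation~(\ref{leftright}) on the Grothendieck-ring side to identify the coefficients. The only cosmetic differences are that the paper takes $\lambda=(\varnothing,\varnothing)$ as its base case and records the smaller bipartitions via integer multiplicities $a_{\lambda,\nu}$ rather than a list $\mu^{(1)},\dots,\mu^{(k)}$.
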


\begin{proof}  We induct on the size of $\lambda$.  The case $\lambda=\biemp$ is easy to check.  Now, by Proposition \ref{zdecomp} we can write 
\begin{equation}\label{a}
\lambda=(\lambda^\black,\emp)(\emp,\lambda^\white)-\sum_{\nu\atop{|\nu|<|\lambda|}}a_{\lambda,\nu}\nu\in R_{m-n}
\end{equation} for some $a_{\lambda,\nu}\in\Z$.  Write $D_{\lambda,\mu}=D_{\lambda,\mu}(m-n)$.  
Applying $\lift_{m-n}$ to (\ref{a}) and using Example \ref{liftemp} we have 
\begin{equation}\label{aa}
\sum_{\mu}D_{\lambda,\mu}\mu=(\lambda^\black,\emp)(\emp,\lambda^\white)-\sum_{\nu\atop{|\nu|<|\lambda|}}a_{\lambda,\nu}\sum_{\mu}D_{\nu,\mu}\mu\in R_t.
\end{equation}
Using  formula (\ref{leftright}) we compute the coefficient of $\mu$ in (\ref{aa}) to be
\begin{equation}\label{aaa}
D_{\lambda,\mu}=\sum_{\kappa\in\cat{P}}LR_{\kappa,\mu^\black}^{\lambda^\black}LR^{\lambda^\white}_{\kappa,\mu^\white}-\sum_{\nu\atop{|\nu|<|\lambda|}}a_{\lambda,\nu}D_{\nu,\mu}.
\end{equation}
On the other hand, since $F_{m|n}$ is a tensor functor, it follows from (\ref{a}) that 
\begin{equation*}
\ch W(\lambda)=\ch W((\lambda^\black,\emp))\ch W((\emp,\lambda^\white))-\sum_{\nu\atop{|\nu|<|\lambda|}}a_{\lambda,\nu}\ch W(\nu).
\end{equation*}
Hence, by Proposition \ref{sprops}(1) along with induction, we have 
\begin{equation*}\ch W(\lambda)=\s_{(\lambda^\black,\emp)}\s_{(\emp,\lambda^\white)}-\sum_{\nu\atop{|\nu|<|\lambda|}}a_{\lambda,\nu}\sum_{\mu}D_{\nu,\mu} \s_\mu.
\end{equation*}
Thus, by Proposition \ref{sprops}(2), 
\begin{equation*}\ch W(\lambda)=\sum_\mu\sum_{\kappa\in\cat{P}}LR_{\kappa,\mu^\black}^{\lambda^\black}LR^{\lambda^\white}_{\kappa,\mu^\white}\s_\mu-\sum_{\nu\atop{|\nu|<|\lambda|}}a_{\lambda,\nu}\sum_{\mu}D_{\nu,\mu} \s_\mu,
\end{equation*} and we are done by (\ref{aaa}).
\end{proof}

\begin{example} Let $m=1$, $n=2$ and consider the bipartition $((1^2),(3))$.   Since
$$\includegraphics{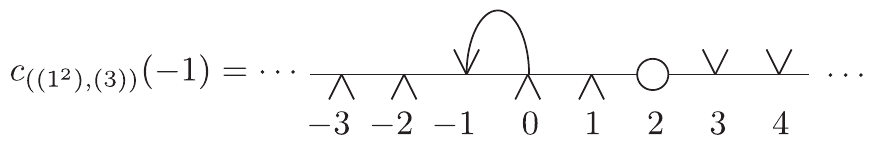}$$ and $$\includegraphics{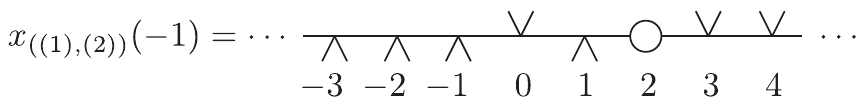}$$ by Corollary \ref{D} we have $\lift_{-1}(((1^2),(3)))=((1^2),(3))+((1),(2))$.  Hence, by Theorem \ref{chW} and Example \ref{schurex} we have 
\begin{align*}
\ch W(((1^2),(3))) & =\s_{((1^2),(3))}+\s_{((1),(2))}\\
& = \tfrac{y_1^2}{x_1^3}+\tfrac{y_1y_2}{x_1^3}+\tfrac{y_2^2}{x_1^3}+\tfrac{y_1^2}{x_1^2y_2}+\tfrac{y_2^2}{x_1^2y_1}+3\tfrac{y_1}{x_1^2}+3\tfrac{y_2}{x_1^2}+2\tfrac{y_1}{x_1y_2}+2\tfrac{y_2}{x_1y_1}\\
&+3\tfrac{1}{x_1}+\tfrac{1}{y_1}+\tfrac{1}{y_2}.
\end{align*}
\end{example}

For any Laurent polynomial $f = f(z_1, \ldots, z_k)$, write
$$ \edeg f = \text{max} \{\ \deg (f |_{z_i := z_i^{\ep_i} , \ 1 \leq i \leq k}) \ | \ \ep_i \in \{\pm 1\}, 1 \leq i \leq k \ \}$$
for the \newterm{extremal degree of $f$}, where $\deg$ defines the familiar total degree.
Then one has $\edeg{(fg)} \leq \edeg f + \edeg g$.
For any bipartition $\lambda$, we consider $\s_\lambda$ and $\ch W(\lambda)$ as Laurent polynomials in the variables $x = \{ x_i \}$ and $y = \{ y_j \}$.
Then we have the following corollary to Theorem \ref{chW}:

\begin{corollary}\label{charedeg}
Let $\lambda$ be a bipartition.  Then $\edeg \ch W(\lambda)  \leq | \lambda |$.
\end{corollary}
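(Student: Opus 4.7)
The plan is to exhibit $W(\lambda)$ as a direct summand of a specific mixed tensor power whose character has an easily controlled extremal degree, and then to deduce the bound for $W(\lambda)$ by term-wise comparison of non-negative Laurent polynomials.

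First, I would recall that $L(\lambda) = \im e_\lambda$ with $e_\lambda \in B_{r,s}$ for $r = |\lambda^\black|$, $s = |\lambda^\white|$, so $L(\lambda)$ is a direct summand of $w_{r,s}$ in $\uRep(GL_{m-n})$. Applying the tensor functor $F_{m|n}$ gives that $W(\lambda) = F_{m|n}(L(\lambda))$ is a direct summand of $F_{m|n}(w_{r,s}) = T(r,s)$ in $\Rep(\glmn)$. Writing $T(r,s) = W(\lambda) \oplus W'$ for the complementary summand, we obtain $\ch T(r,s) = \ch W(\lambda) + \ch W'$. Since characters of objects in $\Rep(\glmn)$ have non-negative integer coefficients, this shows $\ch W(\lambda) \leq \ch T(r,s)$ term-wise in the monomial basis.

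Second, I would compute $\edeg \ch T(r,s)$. By Example \ref{examplechars}, $\ch V = x_1 + \cdots + x_m + y_1 + \cdots + y_n$ and $\ch V^* = x_1^{-1} + \cdots + x_m^{-1} + y_1^{-1} + \cdots + y_n^{-1}$. Under any substitution $z_i \mapsto z_i^{\epsilon_i}$ with $\epsilon_i \in \{\pm 1\}$, each becomes a sum of Laurent monomials of degree $\pm 1$, so $\edeg \ch V \leq 1$ and $\edeg \ch V^* \leq 1$. Since $\ch T(r,s) = (\ch V)^r (\ch V^*)^s$ and $\edeg$ is sub-multiplicative, $\edeg \ch T(r,s) \leq r + s$.

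Finally, I would observe that for Laurent polynomials $f, g$ with non-negative coefficients satisfying $f \leq g$ term-wise, and for any sign substitution $\epsilon$, one has $f|_\epsilon \leq g|_\epsilon$ term-wise, so $\deg(f|_\epsilon) \leq \deg(g|_\epsilon)$; taking the max over $\epsilon$ gives $\edeg f \leq \edeg g$. Applying this to $f = \ch W(\lambda)$ and $g = \ch T(r,s)$ concludes $\edeg \ch W(\lambda) \leq r + s = |\lambda^\black| + |\lambda^\white|$, which is the desired bound (interpreting $|\lambda|$ here as the total size $|\lambda^\black| + |\lambda^\white|$). There is no real obstacle; the one small point requiring care is the monotonicity of $\edeg$ on term-wise-dominated non-negative Laurent polynomials, which holds precisely because characters carry only non-negative weight multiplicities so no cancellation can increase an extremal degree.
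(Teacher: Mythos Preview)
Your proof is correct, and it takes a genuinely different and more elementary route than the paper. The paper argues via Theorem~\ref{chW}: it writes $\ch W(\lambda) = \sum_\mu D_{\lambda,\mu}\,\s_\mu$, invokes an external reference (\cite[(32)]{MV06}) for the bound $\edeg \s_\mu \leq |\mu^\black|+|\mu^\white|$, and then uses the unitriangularity of the $D_{\lambda,\mu}$ (Theorem~\ref{liftprops}(2)) to conclude. Your argument bypasses all of this machinery by observing directly that $W(\lambda)$ is a summand of $T(r,s)$, so its character is term-wise dominated by $(\ch V)^r(\ch V^*)^s$, whose extremal degree is trivially at most $r+s$; the monotonicity of $\edeg$ under term-wise domination of non-negative Laurent polynomials then finishes. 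Your approach is self-contained (no appeal to \cite{MV06}, no need for the character formula or the lifting map), while the paper's approach has the virtue of exercising the structure theory it has just built. Your parenthetical remark that $|\lambda|$ must here be read as the total $|\lambda^\black|+|\lambda^\white|$, rather than the ordered pair defined in \S\ref{bipart}, is also correct and worth flagging, since this is indeed how the bound is used in Lemma~\ref{crosslem}.
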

\begin{proof}
That $\edeg \s_\mu$ for any bipartition $\mu$ follows from e.g. \cite[(32)]{MV06}.
The claim then follows from Theorems \ref{liftprops} (2) and \ref{chW}.
\end{proof}

\subsection{Dimensions}
In this subsection we derive a formula for the $\K$-dimension of $W(\lambda)$ in $\Rep(\gl(m|n))$.  To do so, let $\de_k=\de_k(m|n)$ denote the result of setting $x_i=y_j=1$ for all $1\leq i\leq m$, $1\leq j\leq n$ in $\h_k$.  From the definition of $\h_k$ we have 
\begin{equation}\label{dk}
\de_k=\sum_{0\leq i\leq k}{m+k-i-1\choose m-1}{n\choose i}
\end{equation}  whenever $m>0$, and $\de_k={n\choose k}$ when $m=0$. 
Now, given a bipartition $\lambda$ we let $\de_\lambda=\de_\lambda(m|n)$ denote the result of setting $x_i=y_j=1$ for all $1\leq i\leq m$, $1\leq j\leq n$ in $\s_\lambda$, so that
\begin{equation}\label{dmat}
\de_\lambda=\text{det}\left(
\begin{array}{cccccccc}
\de_{\lambda^\white_{q}} & &  \vdots\\
\de_{\lambda^\white_{q}-1} & \ddots & \de_{\lambda^\white_{2}+1}  &  \vdots\\
\vdots & & \de_{\lambda^\white_{2}}& \de_{\lambda^\white_{1}+1} & \vdots \\
& &  \de_{\lambda^\white_{2}-1} & \de_{\lambda^\white_{1}} & \de_{\lambda^\black_{1}-1}& \vdots\\
&&\vdots& \de_{\lambda^\white_{1}-1} & \de_{\lambda^\black_{1}} & \de_{\lambda^\black_2-1}\\
&&&\vdots & \de_{\lambda^\black_{1}+1} & \de_{\lambda^\black_2}& &\vdots\\
&&&& \vdots & \de_{\lambda^\black_2+1} & \ddots & \de_{\lambda_p^\black-1}\\
&&&&&\vdots & &\de_{\lambda_p^\black}
\end{array}
\right)
\end{equation} where $p$ (resp. $q$) is any integer greater than or equal to $ l(\lambda^\black)$ (resp. $ l(\lambda^\white)$).  It follows immediately from Theorem \ref{chW} that 
\begin{equation}\label{dimW}
\dim_\K W(\lambda)=\sum_\mu D_{\lambda,\mu}\de_\mu
\end{equation} 
in $\Rep(\gl(m|n))$.  The following technical lemma concerning $\de_k$ will be useful later:

\begin{lemma}\label{delem} Assume $m>0$ and let $g_k(u)=\prod_{1\leq j<m}(k-u+j)$  (an $(m-1)$-degree polynomial in the variable $u$). Then 
$\de_{k}+(-1)^{m-1}\de_{n-m-k}=\frac{1}{(m-1)!}\sum_{0\leq i\leq n} g_k(i){n\choose i}$ for all $k\in\Z$.
\end{lemma}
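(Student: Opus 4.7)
The plan is to observe that $g_k(i)/(m-1)!$ is precisely the polynomial extension of the binomial coefficient $\binom{m+k-i-1}{m-1}$, regarded as a polynomial of degree $m-1$ in $k-i$. Indeed, rewriting $g_k(i) = (k-i+1)(k-i+2)\cdots(k-i+m-1)$ exhibits it as the standard rising-factorial form of $(m-1)!\binom{m+k-i-1}{m-1}$. Its $m-1$ zeros lie exactly at $i = k+1, k+2, \ldots, k+m-1$; for $i \leq k$ it reproduces the ordinary binomial coefficient; and for $i \geq k+m$, pulling a factor of $-1$ out of each of the $m-1$ factors yields $(-1)^{m-1}\binom{i-k-1}{m-1}$.

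With this trichotomy in hand, I would split the right-hand side $\sum_{i=0}^{n} g_k(i)\binom{n}{i}/(m-1)!$ according to these three regions. The middle region contributes zero. For the first region, the sum equals $\de_k$ by the defining formula (\ref{dk}): when $k > n$ the extra terms vanish because $\binom{n}{i} = 0$ for $i > n$, while when $k < 0$ the sum is empty, in agreement with $\de_k = 0$ (since $\h_k = 0$ for $k<0$). For the third region, substituting $j = n-i$ and using $\binom{n}{i}=\binom{n}{n-i}$ converts the summand into $(-1)^{m-1}\binom{n-k-j-1}{m-1}\binom{n}{j}$ with $j$ ranging over $[0, n-m-k]$, which by (\ref{dk}) is exactly $(-1)^{m-1}\de_{n-m-k}$. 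Assembling the three pieces gives the claimed identity.

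The only genuine care required lies in the bookkeeping of boundary situations (e.g.\ $k<0$, $k>n-m$, or $k+m>n$), but each of these resolves automatically via the vanishing conventions $\binom{n}{i}=0$ for $i \notin [0,n]$ and $\de_\ell = 0$ for $\ell < 0$. So no substantial obstacle is anticipated; the proof is essentially a polynomial identity combined with a straightforward change of summation variable.
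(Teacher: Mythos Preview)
Your proof is correct and follows essentially the same approach as the paper: both arguments identify $g_k(i)/(m-1)!$ with the polynomial extension of $\binom{m+k-i-1}{m-1}$, split the sum at the zeros $i=k+1,\ldots,k+m-1$, and use the substitution $i\mapsto n-i$ together with $\binom{n}{i}=\binom{n}{n-i}$ to recognize the tail as $(-1)^{m-1}\de_{n-m-k}$. The only cosmetic difference is that the paper computes $\de_k$ and $\de_{n-m-k}$ separately in terms of $g_k$ and then adds, whereas you start from the right-hand side and decompose it.
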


\begin{proof} Since ${m+k-i-1\choose m-1}=\frac{1}{(m-1)!}\prod_{1\leq j< m}(k-i+j)$, by (\ref{dk}) we have 
\begin{equation*}
\de_k=\frac{1}{(m-1)!}\sum_{0\leq i\leq k}\prod_{1\leq j< m}(k-i+j){n\choose i}=\frac{1}{(m-1)!}\sum_{0\leq i\leq k}g_k(i){n\choose i}.
\end{equation*}
Moreover, \begin{align*}
\de_{n-m-k}&=\frac{1}{(m-1)!}\sum_{0\leq i\leq n-m-k}\prod_{1\leq j< m}(n-m-k-i+j){n\choose i}\\
& = \frac{1}{(m-1)!}\sum_{k+m\leq i\leq n}\prod_{1\leq j< m}(i-m-k+j){n\choose i}\\
& = \frac{1}{(m-1)!}\sum_{k+m\leq i\leq n}\prod_{1\leq j< m}(i-k-j){n\choose i}\\
&= \frac{(-1)^{m-1}}{(m-1)!}\sum_{k+m\leq i\leq n}g_k(i){n\choose i}.
\end{align*}
  Since $g_k(i)=0$ whenever $k< i< k+m$, the result follows.
\end{proof}

\subsection{A criterion for the vanishing of $W(\lambda)$}\label{vanishW}  We now concern ourselves with the question of when $W(\lambda)$ is zero.  In the case $\lambda^\white=\emp$ or $\lambda^\black=\emp$, the answer is well known and due to \cite{BR} and \cite{Serg}.  To state their result, we say a partition $\alpha$ is \emph{$(m|n)$-hook} if $\alpha_{m+1}\leq n$.  Pictorially, $\alpha$ is $(m|n)$-hook if and only if its Young diagram fits into an $m$-high, $n$-wide hook:
$$\includegraphics{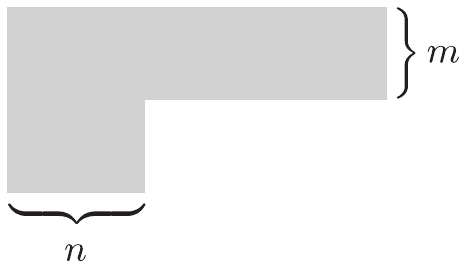}$$
We now record the result on the vanishing of $W(\lambda)$ found in \cite{BR} and \cite{Serg}:

\begin{theorem}\label{hookthm} Suppose $\lambda$ is a bipartition with $\lambda^\white=\emp$ (resp. $\lambda^\black=\emp$).  Then $W(\lambda)\not=0$ in $\Rep(\gl(m|n))$ if and only if $\lambda^\black$ (resp. $\lambda^\white$) is $(m|n)$-hook.
\end{theorem}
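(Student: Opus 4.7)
The plan is to reduce the vanishing question to a statement about characters using Proposition~\ref{sprops}(1), which gives $\ch W(\lambda) = \s_\lambda$ whenever $\lambda^\black = \emp$ or $\lambda^\white = \emp$. Since $W(\lambda) \subset T(r,s)$ is a module whose character is a well-defined Laurent polynomial in the variables $x, y$, one has $W(\lambda) = 0$ if and only if $\ch W(\lambda) = 0$. Next, inspecting the determinantal formula defining $\s_\lambda$ in the case $\lambda^\white = \emp$: taking $q = 0$, the $\bh$-block disappears entirely and the matrix collapses to $(\h_{\lambda^\black_i - i + j})_{1 \leq i, j \leq p}$. This is the familiar Jacobi--Trudi expression for the (non-composite) supersymmetric Schur polynomial $s_{\lambda^\black}(x|y)$. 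Thus the theorem for $\lambda^\white = \emp$ reduces to showing that $s_\alpha(x|y) \neq 0$ in $\K[x,y]$ if and only if $\alpha$ is $(m|n)$-hook.

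For this, I would invoke the classical Berele--Regev combinatorial formula $s_\alpha(x|y) = \sum_T x^{T_0} y^{T_1}$, where the sum runs over super semistandard tableaux $T$ of shape $\alpha$ with entries drawn from the ordered alphabet $1 < \cdots < m < 1' < \cdots < n'$, unprimed entries weakly increasing along rows and strictly increasing down columns, primed entries strictly increasing along rows and weakly increasing down columns. All summands are distinct monomials with positive coefficient, so $s_\alpha(x|y) \ne 0$ iff at least one such tableau exists. If $\alpha$ is $(m|n)$-hook, that is, $\alpha_{m+1} \leq n$, one writes down an explicit tableau: fill the $(i,j)$ box with the unprimed letter $i$ when $i \leq m$, and with the primed letter $(j)'$ when $i > m$ (this is well-defined precisely because $\alpha_{m+1} \leq n$ and the later rows are no longer). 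Conversely, if $\alpha$ is not $(m|n)$-hook, then $\alpha_{m+1} \geq n+1$, so the $(m+1)$-th row of any putative super tableau would require a strictly increasing sequence of $\alpha_{m+1}$ primed entries chosen from only $n$ available primed letters, which is impossible; hence no tableau exists and $s_\alpha(x|y) = 0$.

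The case $\lambda^\black = \emp$ follows by duality: Proposition~\ref{Ldual} gives $L((\emp, \lambda^\white)) = L((\lambda^\white, \emp))^*$ in $\uRep(GL_{m-n})$, and since $F_{m|n}$ is a tensor functor (cf.\ \eqref{imageofadual}) it preserves duals, so $W((\emp, \lambda^\white)) \cong W((\lambda^\white, \emp))^*$ in $\Rep(\glmn)$. As duality is an equivalence, the right-hand side is nonzero iff $W((\lambda^\white, \emp))$ is, which by the previous paragraph occurs iff $\lambda^\white$ is $(m|n)$-hook.

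The main obstacle is establishing the Berele--Regev tableau formula (or equivalently, the vanishing criterion for $s_\alpha(x|y)$) in the first place; to avoid reproving it here, the cleanest route is to cite \cite{BR, Serg} directly, since the theorem is explicitly attributed to those sources. A more self-contained alternative would be to argue via the determinantal formula: for non-$(m|n)$-hook $\alpha$, one can exhibit an explicit linear dependence among the rows of the Jacobi--Trudi matrix after a suitable specialization, forcing the determinant to vanish identically; this however requires some careful bookkeeping with the complete supersymmetric functions $\h_k$.
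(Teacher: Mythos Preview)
Your proposal is correct. Note, however, that the paper itself does not give a proof of Theorem~\ref{hookthm}: it is introduced with ``We now record the result on the vanishing of $W(\lambda)$ found in \cite{BR} and \cite{Serg}'' and no proof follows. Your sketch is thus more detailed than what the paper provides, and it accurately reconstructs the Berele--Regev argument: Proposition~\ref{sprops}(1) to reduce to $\s_\lambda$, the Jacobi--Trudi collapse when $q=0$ to the covariant supersymmetric Schur polynomial, the supertableau expansion to decide vanishing, and Proposition~\ref{Ldual} together with preservation of duals by the tensor functor $F_{m|n}$ for the contravariant case. One minor gap worth patching: in the non-hook direction you assert that row $m+1$ must consist entirely of primed entries, but this needs a line of justification---in any column the unprimed entries strictly increase, so at most $m$ of the first $m+1$ boxes can carry unprimed letters, forcing the $(m+1)$-st to be primed; after that your strict-increase-along-rows argument goes through.
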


The goal of this subsection is to prove a theorem analogous to Theorem \ref{hookthm} which holds for arbitrary bipartitions $\lambda$ (see Theorem \ref{crossthm}).  First, we generalize the notion of $(m|n)$-hook as follows:
We call a bipartition $\lambda$ \emph{$(m|n)$-cross} if there exists $k$ with $0\leq k\leq m$ such that $\lambda^\black_{k+1}+\lambda^\white_{m-k+1}\leq n$.  Pictorially, $\lambda$ is $(m|n)$-cross if and only if its diagram (see \S\ref{bipart}) fits into an $m$-high, $n$-wide cross:
$$\includegraphics{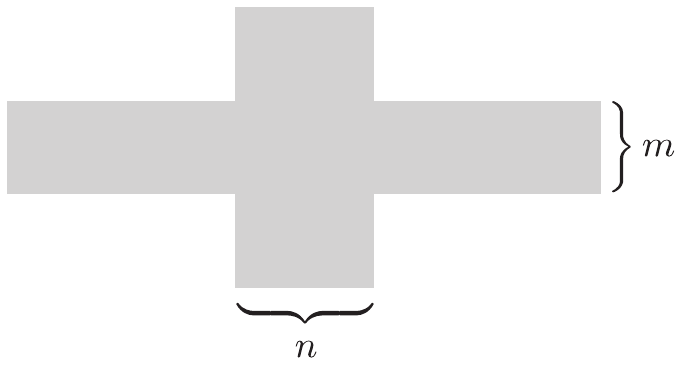}$$  
Indeed, if $\lambda^\black_{k+1}+\lambda^\white_{m-k+1}\leq n$ then $\lambda$ can by covered by an $m$-high, $n$-wide cross if the cross is placed so that the horizontal strip covers exactly the first $k$ rows of $\lambda^\black$ and the vertical strip covers exactly the first $\lambda_{k+1}^\black$ columns of $\lambda^\black$:
$$\includegraphics{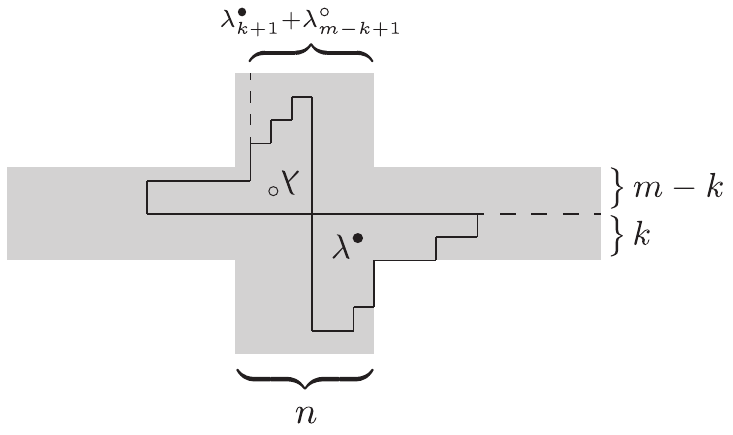}$$
For example, $((3,1),(2))$ is both $(2|1)$-cross and $(1|3)$-cross as shown below:
$$\includegraphics{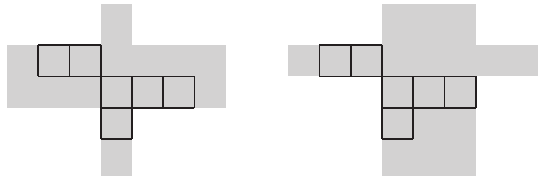}$$ However, $((3,1),(2))$ is not $(1|2)$-cross since it is impossible to cover its diagram with a $1$-high, $2$-wide cross:
$$\includegraphics{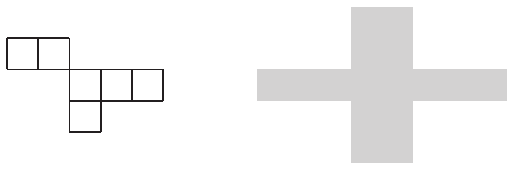}$$
We will eventually show $W(\lambda)\not=0$ in $\Rep(\gl(m|n))$ if and only if $\lambda$ is $(m|n)$-cross (see Theorem \ref{crossthm}).  Notice that for bipartitions $\lambda$ with $\lambda^\white=\emp$ (resp. $\lambda^\black=\emp$) we have that $\lambda$ is $(m|n)$-cross if and only if $\lambda^\black$ (resp. $\lambda^\white$) is $(m|n)$-hook.  Hence our criterion for the vanishing of $W(\lambda)$ will generalize Theorem \ref{hookthm}.  
The following lemma gets us half way towards proving our criterion for the vanishing of $W(\lambda)$.

\begin{lemma}\label{crosslem} If $\lambda$ is $(m|n)$-cross, then $W(\lambda)\not=0$ in $\Rep(\gl(m|n))$.
\end{lemma}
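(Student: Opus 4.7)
The plan is to show $\ch W(\lambda) \neq 0$, which is equivalent to $W(\lambda) \neq 0$ since a finite-dimensional $\glmn$-module decomposes into $\cartan$-weight spaces. I would start from the character formula of Theorem \ref{chW}:
$$\ch W(\lambda) = \s_\lambda + \sum_{\mu : |\mu| < |\lambda|} D_{\lambda,\mu}(m-n)\, \s_\mu.$$
By Theorem \ref{liftprops}(2), the sum is supported on bipartitions $\mu$ with $|\mu| = (|\lambda^\black| - i, |\lambda^\white| - i)$ for some $i > 0$; in particular $|\mu^\black| + |\mu^\white| < |\lambda^\black| + |\lambda^\white|$, while the diagonal term $\s_\lambda$ appears with coefficient $D_{\lambda,\lambda} = 1$.

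Next I would isolate the top-degree piece of the character as in the proof of Corollary \ref{charedeg}: each composite supersymmetric Schur polynomial satisfies $\edeg \s_\mu \leq |\mu^\black| + |\mu^\white|$, so every $\s_\mu$ in the subsidiary sum has strictly smaller extremal degree than $|\lambda^\black| + |\lambda^\white|$. Thus the homogeneous component of $\ch W(\lambda)$ of extremal degree $|\lambda^\black| + |\lambda^\white|$ coincides with that of $\s_\lambda$, and the lemma reduces to the purely combinatorial statement that $\s_\lambda$ has at least one monomial of extremal degree $|\lambda^\black| + |\lambda^\white|$ whenever $\lambda$ is $(m|n)$-cross.

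To establish this combinatorial claim, fix an index $k \in \{0, 1, \ldots, m\}$ witnessing the cross condition, i.e.\ $\lambda^\black_{k+1} + \lambda^\white_{m-k+1} \leq n$. Pictorially, the first $k$ rows of $\lambda^\black$ together with the first $m-k$ rows of $\lambda^\white$ (rotated) fit inside the vertical strip of the cross, while the remaining rows of both partitions have length at most $n$ and fit inside the horizontal strip. I would track the explicit Laurent monomial encoding this placement, roughly of the form
$$M_\lambda = \prod_{i=1}^k x_i^{\lambda^\black_i} \cdot \prod_{j=1}^{m-k} x_{m-j+1}^{-\lambda^\white_j} \cdot (\text{$y$-factor for the horizontal strip}),$$
whose extremal degree is exactly $|\lambda^\black| + |\lambda^\white|$. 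The remaining task, which is the main obstacle, is to show that $M_\lambda$ arises with nonzero coefficient from the determinantal formula for $\s_\lambda$: one would identify the unique permutation in the Leibniz expansion that contributes $M_\lambda$ (the main diagonal, up to re-indexing), verify that the prescribed $y$-exponents are forced, and check that no other permutation cancels it.

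Alternatively, the combinatorial claim can be extracted directly from the literature on composite supersymmetric Schur polynomials (see, e.g., \cite{MV06}), where $\s_\lambda$ is shown to be a nonzero Laurent polynomial of extremal degree exactly $|\lambda^\black|+|\lambda^\white|$ precisely when $\lambda$ is $(m|n)$-cross. Coupled with the extremal-degree analysis above, either route immediately yields $\ch W(\lambda) \neq 0$ and hence the lemma.
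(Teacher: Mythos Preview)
Your overall strategy is the same as the paper's: reduce to a combinatorial claim by an extremal-degree argument, then exhibit a monomial of extremal degree $|\lambda^\black|+|\lambda^\white|$. The monomial you write down is, up to specifying the $y$-part, exactly the paper's monomial \eqref{goodterm}.

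The difference lies in how the combinatorial claim is verified. You reduce via Theorem~\ref{chW} to showing that $\s_\lambda$ itself contains a top-degree monomial, and then propose either to chase it through the determinantal formula (which you correctly flag as the main obstacle and leave as a sketch) or to invoke \cite{MV06}. The paper instead goes through Proposition~\ref{zdecomp}: it looks at the product $C=\ch W((\lambda^\black,\emp))\cdot\ch W((\emp,\lambda^\white))=\s_{(\lambda^\black,\emp)}\s_{(\emp,\lambda^\white)}$, whose top-degree part coincides with that of $\s_\lambda$, and there the monomial is immediately visible from the supertableaux description of the \emph{non-composite} supersymmetric Schur polynomials \cite{BR}. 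This sidesteps the determinant entirely and keeps the argument self-contained; your determinantal route would require genuine work to check non-cancellation, and the literature citation offloads precisely that work. Both approaches are valid, but the paper's factoring trick is the cleaner way to finish.
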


\begin{proof} Assume $k$ is such that $0\leq k\leq m$ and $\lambda^\black_{k+1}+\lambda^\white_{m-k+1}\leq n$.  For convenience, write $l=\lambda^\black_{k+1}$ and $l'=\lambda^\white_{m-k+1}$.  Now set  $\eta^\black_i=(\lambda^\black)^{\bf t}_i-k$ for all $1\leq i\leq l$ and $\eta^\white=(\lambda^\white)^{\bf t}_i-m+k$ for all $1\leq i\leq l'$.  Now, $\ch W((\lambda^\black,\emp))$ (resp. $\ch W((\emp,\lambda^\white))$) is a covariant (resp. contravariant) supersymmetric Schur polynomials, which can be computed using so-called \emph{supertableaux} (see \cite{BR}).  From the supertableaux definition of supersymmetric Schur polynomials it is apparent that the coefficient of $x_1^{\lambda^\black_1}\cdots x^{\lambda^\black_k}_ky_1^{\eta^\black_1}\cdots y^{\eta^\black_l}_l$ (resp. $x_{k+1}^{-\lambda^\white_1}\cdots x^{-\lambda^\white_{m-k}}_my_{n-l'+1}^{-\eta^\white_1}\cdots y^{-\eta^\white_{l'}}_n$) in 
$\ch W((\lambda^\black,\emp))$ (resp. $\ch W((\emp,\lambda^\white))$) has nonzero coefficient.  Hence the coefficient of \begin{equation}\label{goodterm}
\frac{x_1^{\lambda^\black_1}\cdots x^{\lambda^\black_k}_ky_1^{\eta^\black_1}\cdots y^{\eta^\black_l}_l}{x_{k+1}^{\lambda^\white_1}\cdots x^{\lambda^\white_{m-k}}_my_{n-l'+1}^{\eta^\white_1}\cdots y^{\eta^\white_{l'}}_n}
\end{equation}
in  $C := \ch(W((\lambda^\black,\emp))\otimes W((\emp,\lambda^\white)))$ is nonzero.  
Thus
 \begin{equation*}
 \edeg C \geq \sum_{1\leq i\leq k}\lambda^\black_i+\sum_{1\leq i\leq l}\eta^\black_i+\sum_{1\leq i\leq m-k}\lambda^\white_i+\sum_{1\leq i\leq l'}\eta^\white_i=|\lambda|,
 \end{equation*} 
 since $l + l' \leq n$.
On the other hand, by  Proposition \ref{zdecomp} we have 
\begin{equation}\label{chdec}
C=\ch W(\lambda)+\ch W(\mu^{(1)})+\cdots+\ch W(\mu^{(s)})
\end{equation}
 for some bipartitions $\mu^{(1)},\ldots,\mu^{(s)}$ with $|\mu^{(i)}|<|\lambda|$ for all $i=1,\ldots,s$.
Thus the coefficient of \eqref{goodterm} is non-zero in $\ch W(\lambda)$ by Corollary \ref{charedeg}. 
\end{proof}

The rest of this subsection is devoted to proving the converse of  Lemma \ref{crosslem}.  We begin by focusing our attention to certain class of non-$(m|n)$-cross bipartitions.  More precisely, we call a bipartition $\lambda$ \emph{almost $(m|n)$-cross} if $ l(\lambda^\black), l(\lambda^\white)\leq m+1$ and $\lambda^\black_{k+1}+\lambda^\white_{m-k+1}=n+1$ whenever $0\leq k\leq m$.  Equivalently, $\lambda$ is almost $(m|n)$-cross if $\lambda$ is not $(m|n)$-cross, but any bipartition obtained from $\lambda$ by removing a box is $(m|n)$-cross.  Pictorially, $\lambda$ is almost $(m|n)$-cross if and only if the Young diagrams of $\lambda^\black$ and $\lambda^\white$ can be glued together to form a $(m+1)\times(n+1)$ rectangle:
$$\includegraphics{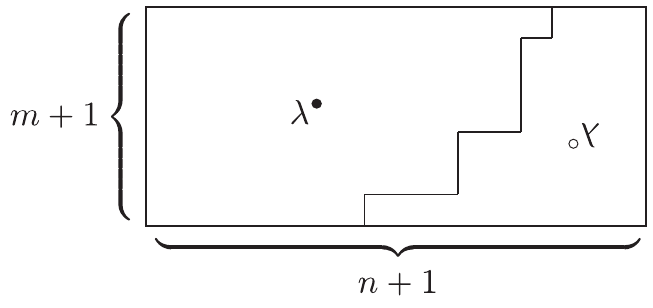}$$ 
The next proposition states that almost $(m|n)$-cross bipartitions lift trivially from $R_{m-n}$, hence characters and dimensions of the corresponding representations of $\gl(m|n)$ can be easily computed (see Theorem \ref{chW} and (\ref{dimW})).

\begin{proposition}\label{allift}
If $\lambda$ is almost $(m|n)$-cross, then $\lift_{m-n}(\lambda)=\lambda$.
\end{proposition}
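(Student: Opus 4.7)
The plan is to translate the statement, via Corollary \ref{D}, into the combinatorial assertion that $D'_{\lambda,\mu}(m-n) = \delta_{\lambda,\mu}$ for every bipartition $\mu$. By the definition of the linking relation, this reduces further to showing that the diagram $x_\lambda(m-n)$ contains no $\down\up$-pair at all; any bipartition linked to $\lambda$ would then have to equal $\lambda$. So the entire task is to stare at $x_\lambda(m-n)$ and rule out $\down\up$-pairs.

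First I would write out $I_\up(\lambda)$ and $I_\down(\lambda,m-n)$ and exploit the defining relation $\lambda^\black_{k+1}+\lambda^\white_{m-k+1}=n+1$. Substituting $i=k+1$ and $\delta=m-n$ into $i-\delta-\lambda^\white_i$ and then re-indexing via $j=m-i+2$, the condition rearranges to
$$ j - (m-n) - \lambda^\white_j \;=\; \lambda^\black_{m-j+2} - (m-j+1) $$
for $1\le j\le m+1$. In other words, the first $m+1$ elements of $I_\down(\lambda,m-n)$ coincide, in reverse order, with the first $m+1$ elements of $I_\up(\lambda)$. So those $m+1$ common integers all receive the label $\cross$ in $x_\lambda(m-n)$.

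Next I would analyze the surviving entries of $I_\up$ and $I_\down$, which come from the indices $i\ge m+2$ where $\lambda^\black_i=\lambda^\white_i=0$ by the length bounds in the almost $(m|n)$-cross definition. These contribute $-(i-1)\le -(m+1)$ to $I_\up$ and $i-m+n\ge n+2$ to $I_\down$. Using $\lambda^\black_1\le n+1$ (the $k=0$ case of the defining condition) one checks $\lambda^\black_i-(i-1)\le n+1$ for all $i\ge 1$, so no element of the tail $\{n+2,n+3,\dots\}$ lies in $I_\up$; symmetrically no element of $\{-(m+1),-(m+2),\dots\}$ lies in $I_\down$. Therefore every $\up$-labeled vertex of $x_\lambda(m-n)$ sits at position $\le -(m+1)$ while every $\down$-labeled vertex sits at position $\ge n+2$.

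With these bounds in hand the conclusion is automatic: a $\down\up$-pair $(i,j)$ requires $i<j$ with $i$ labeled $\down$ and $j$ labeled $\up$, which would force $n+2\le i<j\le -(m+1)$, an impossibility. Hence $x_\lambda(m-n)$ admits no $\down\up$-pairs, $D'_{\lambda,\mu}(m-n)=\delta_{\lambda,\mu}$ by Corollary \ref{Dprimeunitri}, and finally $\lift_{m-n}(\lambda)=\lambda$ by Corollary \ref{D}. The only real content is the re-indexing identity above; once the almost $(m|n)$-cross condition is recognized as the statement that $I_\up$ and $I_\down$ overlap perfectly on their non-tail parts, the rest is pure bookkeeping, and there is no serious obstacle to overcome.
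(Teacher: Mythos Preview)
Your argument is correct and is essentially the same as the paper's: both compute $I_\up(\lambda)$ and $I_\down(\lambda,m-n)$, use the almost $(m|n)$-cross relation $\lambda^\black_{k+1}+\lambda^\white_{m-k+1}=n+1$ to see that the first $m+1$ entries of each coincide (yielding $\cross$ labels), and then observe that the remaining $\up$'s sit at positions $\le -(m+1)$ while the remaining $\down$'s sit at positions $\ge n+2$, so $c_\lambda(m-n)$ has no caps and Corollary \ref{D} finishes. The paper just writes out the two index sets directly and refers to a picture rather than spelling out the re-indexing identity, but the content is identical.
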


\begin{proof}
Assume $\lambda$ is almost $(m|n)$-cross.  Then 
\begin{align*}
I_\up(\lambda)&=\{\lambda_1^\black,\ldots,\lambda^\black_{m+1}-m,-m-1,-m-2,\ldots\},\\
I_\down(\lambda,m-n)&=\{1-m+n-\lambda^\white_1,\ldots,1+n-\lambda^\white_{m+1},n+2,n+3,\ldots\}\\
&= \{\lambda^\black_{m+1}-m,\ldots,\lambda_1^\black,n+2,n+3,\ldots\}.
\end{align*}
Hence, $$\includegraphics{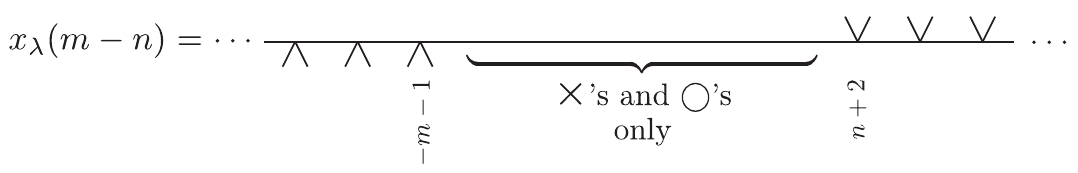}$$  In particular, $c_\lambda(m-n)$ has no caps and we are done by Corollary \ref{D}.
\end{proof}

\begin{corollary}\label{aldim}
If $\lambda$ is almost $(m|n)$-cross, then $\dim_\K W(\lambda)=\de_\lambda$ in $\Rep(\gl(m|n))$.
\end{corollary}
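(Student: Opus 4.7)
The plan is to deduce this corollary directly from the preceding Proposition \ref{allift} together with the dimension formula (\ref{dimW}). Specifically, the character formula of Theorem \ref{chW}, evaluated at $x_i = y_j = 1$, gives
\[
\dim_\K W(\lambda) \;=\; \sum_\mu D_{\lambda,\mu}(m-n)\, \de_\mu,
\]
so it suffices to show that only the term $\mu = \lambda$ contributes.

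To do this, I would invoke Proposition \ref{allift}, which says that for an almost $(m|n)$-cross bipartition $\lambda$ we have $\lift_{m-n}(\lambda) = \lambda$ in $R_t$. Unpacking the definition of the lifting map, this means that in the expansion $\lift_{m-n}(\lambda) = \sum_\mu D_{\lambda,\mu}\mu$, only the coefficient $D_{\lambda,\lambda}$ is nonzero, and by Theorem \ref{liftprops}(2) it equals $1$. Substituting this into the displayed dimension formula collapses the sum to the single term $\de_\lambda$.

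There is no real obstacle here: the work has already been done in Proposition \ref{allift} (via the cap diagram of $\lambda$ having no caps, so that $D'_{\lambda,\mu} = \delta_{\lambda,\mu}$) and in the derivation of (\ref{dimW}) from Theorem \ref{chW}. The corollary is essentially a one-line specialization, and the only thing to be careful about is citing the correct identification $D_{\lambda,\mu} = D'_{\lambda,\mu}$ from Corollary \ref{D} if one wants to refer to the diagrammatic description, though Proposition \ref{allift} is already phrased directly in terms of the lifting map, making even that step unnecessary.
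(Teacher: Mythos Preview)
Your proposal is correct and follows exactly the paper's approach: the paper's proof reads ``This follows immediately from Proposition \ref{allift} and (\ref{dimW}),'' and you have simply unpacked that one-line justification.
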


\begin{proof}
This follows immediately from Proposition \ref{allift} and (\ref{dimW}).
\end{proof}

We are now ready to prove the following lemma, which is the crucial step towards proving the converse of Lemma \ref{crosslem}.

\begin{lemma}\label{alzero}
If $\lambda$ is almost $(m|n)$-cross, then $W(\lambda)=0$ in $\Rep(\gl(m|n))$.
\end{lemma}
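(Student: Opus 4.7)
The strategy is to show $\de_\lambda = 0$ and invoke Corollary \ref{aldim}, which gives $\dim_\K W(\lambda) = \de_\lambda$; hence $W(\lambda) = 0$.

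Since $l(\lambda^\black), l(\lambda^\white) \leq m+1$ when $\lambda$ is almost $(m|n)$-cross, I would take $p = q = m+1$ in (\ref{dmat}), producing a $(2m+2) \times (2m+2)$ matrix $N$ whose entries can be read off as
\begin{equation*}
N_{ij} = \begin{cases} \de_{\lambda^\white_{m+2-j} + j - i} & \text{if } 1 \leq j \leq m+1, \\[4pt] \de_{\lambda^\black_{j-m-1} + i - j} & \text{if } m+2 \leq j \leq 2m+2. \end{cases}
\end{equation*}

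For each $\ell \in \{1, \ldots, m+1\}$, the almost-cross identity $\lambda^\black_\ell + \lambda^\white_{m+2-\ell} = n+1$ pairs column $j = \ell + m + 1$ (carrying $\lambda^\black_\ell$) with column $j' = \ell$ (carrying $\lambda^\white_{m+2-\ell}$): setting $k_\ell(i) := \lambda^\black_\ell + i - \ell - m - 1$, one checks directly that $N_{i,j} = \de_{k_\ell(i)}$ while $N_{i,j'} = \de_{n - m - k_\ell(i)}$. Applying Lemma \ref{delem} then yields
\begin{equation*}
N_{i,j} + (-1)^{m-1} N_{i,j'} = \frac{1}{(m-1)!} \sum_{u=0}^n g_{k_\ell(i)}(u) \binom{n}{u},
\end{equation*}
which, since $g_k(u)$ has degree $m-1$ in $k$ and $k_\ell(i)$ is affine in $i$, is a polynomial in $i$ of degree at most $m-1$.

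Next I would perform the determinant-preserving column operations $C_{\ell+m+1} \mapsto C_{\ell+m+1} + (-1)^{m-1} C_\ell$ for $\ell = 1, \ldots, m+1$; these affect disjoint $\h$-columns and so are independent. After the operations, the $m+1$ modified columns have row-$i$ entries that are polynomials in $i$ of degree $\leq m-1$; as vectors in $\K^{2m+2}$ they therefore lie in the $m$-dimensional subspace consisting of tuples $(P(1), \ldots, P(2m+2))$ with $\deg P \leq m-1$. Being $m+1 > m$ vectors in an $m$-dimensional space, they are linearly dependent, forcing $\det N = \de_\lambda = 0$. The degenerate case $m = 0$ (excluded from the hypothesis of Lemma \ref{delem}) is immediate from $\de_k = \binom{n}{k} = \de_{n-k}$, which already makes the combined column zero in the $2 \times 2$ determinant.

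The main technical obstacle is extracting the precise entries of $N$ from the schematic determinantal formula (\ref{dmat}) and carrying out the bookkeeping that matches $\lambda^\black$- and $\lambda^\white$-columns via the almost-cross condition; once the entry formula is in hand, the linear-algebra portion of the argument (Lemma \ref{delem} plus the Vandermonde-type dimension count) is routine.
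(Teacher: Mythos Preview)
Your proposal is correct and follows essentially the same route as the paper: reduce to $\de_\lambda=0$ via Corollary~\ref{aldim}, pair the $\bh$- and $\h$-columns through the almost-cross identity, apply Lemma~\ref{delem}, and conclude by a dimension count on polynomials of degree $\le m-1$. The only cosmetic difference is that the paper assembles the combined columns into a separate matrix $C$ and argues the dependence via the degree of $g_k(u)$ in $u$, whereas you perform in-place column operations and argue via the degree in the row index $i$; both exploit that $g_k(u)=\prod_{1\le j<m}(k-u+j)$ has degree $m-1$ in either variable.
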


\begin{proof} By Corollary \ref{aldim} it suffices to show $\de_\lambda=0$ whenever $\lambda$ is almost $(m|n)$-cross.  If $m=0$, then almost $(m|n)$-cross bipartitions are exactly ones of the form $\lambda=((l), (n+1-l))$ for some $0\leq l\leq n+1$.  In this case  
\begin{equation*}
\de_\lambda=\det\left(\begin{array}{cc}
\de_{n+1-l} & \de_{l-1}\\
\de_{n-l} & \de_{l}
\end{array}\right)
=\det\left(\begin{array}{cc}
{n\choose n+1-l} & {n\choose l-1}\\[2pt]
{n\choose n-l} & {n\choose l}
\end{array}\right)=0.
\end{equation*}  Hence we may assume $m>0$.  Assume $\lambda$ is almost $(m|n)$-cross so $ l(\lambda^\black), l(\lambda^\white)\leq m+1$ and $\lambda^\black_i=n+1-\lambda^\white_{m-i+2}$ for all $1\leq i\leq m+1$.  Then, by (\ref{dmat}), $\de_\lambda$ is the determinant of the $(2m+2)\times(2m+2)$ block matrix $(A~B)$ where 
\begin{equation*}\label{Amat}
A=\left(\begin{array}{cccc}
\de_{\lambda^\white_{m+1}} & \de_{\lambda^\white_{m}-1}   & \cdots& \de_{\lambda^\white_1+m} \\
\de_{\lambda^\white_{m+1}-1} & \de_{\lambda^\white_{m}} & \cdots & \de_{\lambda^\white_1+m-1} \\
\vdots & \vdots &  & \vdots\\
\de_{\lambda^\white_{m+1}-2m-1} & \de_{\lambda^\white_{m}-2m} & \cdots & \de_{\lambda^\white_1-m-1} 
\end{array}\right),
\end{equation*}
\begin{equation*}\label{Bmat}
B=\left(\begin{array}{cccc}
\de_{n-m-\lambda^\white_{m+1}} & \de_{n-m+1-\lambda^\white_{m}}   & \cdots& \de_{n-2m-\lambda^\white_1} \\
\de_{n-m+1-\lambda^\white_{m+1}} & \de_{n-m-\lambda^\white_{m}} & \cdots & \de_{n-2m+1-\lambda^\white_1} \\
\vdots & \vdots & & \vdots\\
\de_{n+m+1-\lambda^\white_{m+1}} & \de_{n+m-\lambda^\white_{m}} & \cdots & \de_{n+1-\lambda^\white_1} 
\end{array}\right).
\end{equation*}  Hence, if we let $C$ denote the $(2m+2)\times(m+1)$ matrix whose $j$th column is given by $\text{Col}_j(C)=\text{Col}_j(A)+(-1)^{m-1}\text{Col}_j(B)$, then by Lemma \ref{delem} we have 
\begin{equation*}\label{Cmat}
C=\tfrac{1}{(m-1)!}\sum_{0\leq i\leq n}{n\choose i}\left(\begin{array}{cccc}
g_{\lambda^\white_{m+1}}(i) & g_{\lambda^\white_{m}+1}(i)   & \cdots& g_{\lambda^\white_1+m}(i) \\
g_{\lambda^\white_{m+1}-1}(i) & g_{\lambda^\white_{m}}(i) & \cdots & g_{\lambda^\white_1+m-1}(i) \\
\vdots & \vdots &  & \vdots\\
g_{\lambda^\white_{m+1}-2m-1}(i) & g_{\lambda^\white_{m}-2m}(i) & \cdots & g_{\lambda^\white_1-m-1}(i) 
\end{array}\right)
\end{equation*} 
where $g_k(u)$ is the polynomial defined in Lemma \ref{delem}.  To show $\de_\lambda=0$ we will verify that the columns of $C$ (and hence of $(A~B)$) are linearly dependent.  To do so, notice $g_{k-l}(u)=g_k(u+l)$ for any $k,l\in\Z$.  Hence, the $j$th column of $C$ is 
\begin{equation}\label{Ccol}
\text{Col}_j(C)=\tfrac{1}{(m-1)!}\sum_{0\leq i\leq n}{n\choose i}\left(\begin{array}{c}
g_{\lambda^\white_{m+2-j}+j-1}(i) \\
g_{\lambda^\white_{m+2-j}+j-1}(i+1) \\
\vdots \\
g_{\lambda^\white_{m+2-j}+j-1}(i+2m+1) 
\end{array}\right).
\end{equation} 
The $m+1$ polynomials $g_{\lambda^\white_{m+2-j}+j-1}(u)$ for $1\leq j\leq m+1$ are each of degree $m-1$ in the variable $u$.  Hence, there exist $a_1,\ldots,a_{m+1}\in\K$ (not all zero) with $\sum_{j=1}^{m+1}a_jg_{\lambda^\white_{m+2-j}+j-1}(u)=0$.  It follows from (\ref{Ccol}) that  $\sum_{j=1}^{m+1}a_j\text{Col}_j(C)=0$.
\end{proof}

Finally, we are in position to prove our criterion for the vanishing of $W(\lambda)$.

\begin{theorem}\label{crossthm}  
$W(\lambda)\not=0$ in $\Rep(\gl(m|n))$ if and only if $\lambda$ is $(m|n)$-cross.
\end{theorem}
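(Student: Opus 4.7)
The ``if'' direction is precisely Lemma \ref{crosslem}, so I will focus on the converse: if $\lambda$ is not $(m|n)$-cross, then $W(\lambda)=0$. My plan is strong induction on $|\lambda^\black|+|\lambda^\white|$, anchored at Lemma \ref{alzero} and propagated via tensoring with $L((\Box,\emp))$ and $L((\emp,\Box))$.

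First, given $\lambda$ not $(m|n)$-cross, I would produce an almost $(m|n)$-cross $\mu$ with $\mu^\black\subseteq \lambda^\black$ and $\mu^\white\subseteq \lambda^\white$ coordinatewise. Setting
$$\mu^\black_i=\min(n+1,\lambda^\black_i),\qquad \mu^\white_j=\max(0,n+1-\lambda^\black_{m+2-j})$$
for $1\leq i,j\leq m+1$ (and zero otherwise) works: the condition $\mu^\white_j\leq \lambda^\white_j$ is exactly the failure of the $(m|n)$-cross inequality for $\lambda$, and one checks monotonicity and the defining relation $\mu^\black_i+\mu^\white_{m+2-i}=n+1$ case-by-case. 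If $\lambda=\mu$, Lemma \ref{alzero} applies. Otherwise, after switching the roles of black and white if necessary, I may assume $\lambda^\black\neq\mu^\black$, so some row $k$ has $\lambda^\black_k>\mu^\black_k$ with $\lambda^\black_k>\lambda^\black_{k+1}$; removing the end box of row $k$ gives $\lambda'$ with $\mu\subseteq\lambda'$, hence $\lambda'$ is still not $(m|n)$-cross (since shrinking strictly preserves failure of the cross inequalities). By induction $W(\lambda')=0$.

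The crux is to show $L(\lambda)$ is a direct summand of $L(\lambda')\otimes L((\Box,\emp))$ in $\uRep(GL_{m-n})$; then, since $F_{m|n}$ is a tensor functor sending $L((\Box,\emp))$ to $V$ and preserving direct sums, $W(\lambda)$ is a summand of $W(\lambda')\otimes V=0$, and we are done. Writing $\lambda'\cdot_{m-n}(\Box,\emp)=\sum_\nu c_\nu\nu$ in $R_{m-n}$ with $c_\nu\in\mathbb{Z}_{\geq 0}$, I need $c_\lambda=1$. Applying the ring map $\lift_{m-n}$ (Theorem \ref{liftprops}(1)) and using $\lift_{m-n}((\Box,\emp))=(\Box,\emp)$ (Example \ref{liftemp}) together with formula \eqref{leftbox}, the coefficient of $\lambda$ in $\lift_{m-n}(\lambda')\cdot_t(\Box,\emp)$ receives its only contribution from the leading term $\lambda'\cdot_t(\Box,\emp)$, because the remaining terms $D_{\lambda',\rho}\rho$ have $|\rho^\black|<|\lambda'^\black|$ and $|\rho^\white|<|\lambda'^\white|$ by Corollary \ref{D} and Theorem \ref{liftprops}(2), so after adding one box via \eqref{leftbox} they cannot reach the size $(|\lambda'^\black|+1,|\lambda'^\white|)$ of $\lambda$. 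Hence this coefficient equals $1$.

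Matching this against $\sum_\nu c_\nu D_{\nu,\lambda}=c_\lambda+\sum_{\nu\neq\lambda}c_\nu D_{\nu,\lambda}$, I must argue the second sum vanishes. For $\nu\neq\lambda$ with $D_{\nu,\lambda}\neq 0$ one needs $|\nu^\black|>|\lambda^\black|$ and $|\nu^\white|>|\lambda^\white|$ (Theorem \ref{liftprops}(2) plus Corollary \ref{D}); but any $\nu$ with $c_\nu>0$ indexes a summand of $L(\lambda')\otimes L((\Box,\emp))$, which lives in the word $w_{|\lambda^\black|,|\lambda^\white|}$, so by Theorem \ref{idempclass} such a $\nu$ satisfies $|\nu^\black|\leq|\lambda^\black|$, contradicting the strict inequality. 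Thus $c_\lambda=1$, as required. The main obstacle is this step: the unitriangularity of $\lift_{m-n}$ with respect to the coordinatewise size order must be made to interact with \eqref{leftbox} on one side and with the idempotent classification of Theorem \ref{idempclass} on the other, and only the combination of both directions rules out stray contributions.
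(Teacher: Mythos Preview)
Your proof is correct and follows essentially the same approach as the paper: both argue the ``only if'' direction by strong induction on size, with Lemma~\ref{alzero} as the base case, and in the inductive step remove a box from a non-$(m|n)$-cross $\lambda$ to get a smaller non-$(m|n)$-cross $\lambda'$, then use the unitriangularity of $\lift_{m-n}$ together with \eqref{leftbox} (or \eqref{rightbox}) to see that $L(\lambda)$ is a summand of $L(\lambda')\otimes L((\Box,\emp))$. Your version is simply more explicit in two places --- the construction of the almost-$(m|n)$-cross sub-bipartition (which the paper absorbs into the equivalent characterization of ``almost $(m|n)$-cross'' as minimal non-cross) and the coefficient-matching argument showing $c_\lambda=1$ (which the paper compresses into a one-line appeal to Theorem~\ref{liftprops}(2)).
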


\begin{proof}  One direction is Lemma \ref{crosslem}.  To prove the other, assume $\lambda$ is not $(m|n)$-cross.  We will proceed by inducting on $|\lambda|=(r,s)$.  The non-$(m|n)$-cross bipartitions of minimal size are exactly the almost $(m|n)$-cross bipartitions, hence the base case of our induction is Lemma \ref{alzero}.  If $\lambda$ is not almost $(m|n)$-cross, then there exists a non-$(m|n)$-cross bipartition $\mu$ which is obtained from $\lambda$ by removing a box.  If $\mu\vdash(r-1,s)$ then by (\ref{leftbox}) we have $\mu(\Box,\emp)=\lambda+\mu^{(1)}+\cdots+\mu^{(k)}\in R_t$ for some bipartitions $\mu^{(1)},\ldots,\mu^{(k)}$ with $|\mu^{(i)}|<|\lambda|$ for all $1\leq i\leq k$.  Hence, by Theorem \ref{liftprops}(2), $\mu(\Box,\emp)=\lambda+\nu^{(1)}+\cdots+\nu^{(l)}\in R_{m-n}$ for some bipartitions $\nu^{(1)},\ldots,\nu^{(l)}$, which implies 
$W(\mu)\otimes W((\Box,\emp))=W(\lambda)\oplus W(\nu^{(1)})\oplus\cdots\oplus W(\nu^{(l)})$
in $\Rep(\gl(m|n))$.  By induction, we know $W(\mu)=0$, hence $W(\lambda)=0$ too.  For the case $\mu\vdash(r,s-1)$, one argues similarly using (\ref{rightbox}).
\end{proof}

\begin{remark} As mentioned above, if $\lambda^\white=\emp$ (resp. $\lambda^\black=\emp$), then $\lambda$ being $(m|n)$-cross is equivalent to $\lambda^\black$ (resp. $\lambda^\white$) being $(m|n)$-hook.  Hence Theorem \ref{crossthm} is a generalization of Theorem \ref{hookthm}.  

On the other hand, suppose $n=0$ and set $m=d$ so that $F_{m|n}$ is identified with $F_d$ (see \S\ref{functorFd}).   In this case $\lambda$ is $(m|n)$-cross if and only if $ l(\lambda)\leq d$.  Hence, Theorem \ref{crossthm} also generalizes the vanishing criterion in Theorem \ref{Fdim}.
\end{remark}

\subsection{Decomposing $W(\lambda)\otimes W(\mu)$}  Since $F_{m|n}$ is a tensor functor, Theorem \ref{crossthm} along with our method of decomposing arbitrary tensor products in $\uRep(GL_\delta)$ from \S\ref{arbtens} immediately allow us to decompose tensor products of  the form $W(\lambda)\otimes W(\mu)$. 

\begin{example}  From Example \ref{dectensex} with $\delta=0$ we have the following decomposition in $\Rep(\gl(m|m))$:
$$W((2),\emp)\otimes W(\bibox)\cong W(((2,1),\Box))\oplus W(((3),\Box))\oplus W(((1^2),\emp))\oplus W(((2),\emp))^{\oplus 2}.$$  If $m>1$, all the bipartitions in the decomposition above are $(m|m)$-cross.  Hence by Theorem \ref{crossthm} the summands in the decomposition above are all nonzero.  On the other hand, $((2,1),\Box)$ is the only bipartition above which is not $(1|1)$-cross.  Hence by Theorem \ref{crossthm}, we have the following decomposition into nonzero indecomposables in $\Rep(\gl(1|1))$:
$$W((2),\emp)\otimes W(\bibox)\cong W(((3),\Box))\oplus W(((1^2),\emp))\oplus W(((2),\emp))^{\oplus 2}.$$

\end{example}

\bibliographystyle{alphanum}	
	\bibliography{references}

\end{document}